\newtheorem{theorem}{Theorem}[section]
\newtheorem{lemma}[theorem]{Lemma}
\newtheorem{proposition}[theorem]{Proposition}
\newtheorem{corollary}[theorem]{Corollary}
\theoremstyle{definition}
\newtheorem{definition}[theorem]{Definition}
\hfill \fbox{}}
\newtheorem*{theorem*}{Theorem}
\theoremstyle{definition}
\theoremstyle{thIonascu}
\newtheorem*{thm-Wermer}{Theorem (Wermer, 1952)}
\theoremstyle{thm-Wermer}
\newtheorem*{theorem-Nikolskii}{Theorem (Nikolskii)}
\theoremstyle{theorem-Nikolskii}
\newtheorem*{theorem-FJKP}{Theorem (Foias, Jung, Ko and Pearcy)}
\theoremstyle{theorem-FJKP}
\newtheorem*{theorem-FX}{Theorem (Fang and Xia)}
\theoremstyle{theorem-FX}
\theoremstyle{remark}
\newtheorem{remark}[theorem]{Remark}
\numberwithin{equation}{section}
\newcommand{\norm}[1]{\left\lvert\left\lvert#1\right\rvert\right\rvert}
\DeclareMathOperator*{\Span}{span \;}
\newcommand{\ran}{\textnormal{ran}}
\newcommand{\Impart}{\textnormal{Im}}
\newcommand{\A}{\mathcal{A}}
\newcommand{\Dom}{\textnormal{Dom}}
\newcommand{\pe}[1]{\langle#1\rangle}
\newcommand{\EL}{\mathcal{L}}
\newcommand{\C}{\mathbb{C}}
\newcommand{\N}{\mathbb{N}}
\newcommand{\D}{\mathbb{D}}
\newcommand{\T}{\mathbb{T}}
\newcommand{\PR}{\textnormal{Re}}
\newcommand{\al}{\alpha}
\newcommand{\sumn}{\sum_{n=1}^{\infty}}
\newcommand{\R}{\mathbb{R}}
\newcommand{\Fi}{\varphi}
\newcommand {\conm}[1]{{\{#1\}'}}
\newcommand{\biconm}[1]{{\{#1\}''}}
\newcommand{\X}{\mathcal{X}}
\newcommand{\alg}{{\textnormal{alg}}}
\newcommand{\inte}{\textnormal{int}}
\begin{document}
	
	\title[On commutants of composition operators embedded into $C_0$-semigroups]{On commutants of composition operators \\ embedded into $C_0$-semigroups}

	%   Information for scond author
	\author{F. Javier Gonz\'alez-Doña}
	\address{F. Javier González-Doña \newline
		Departamento de Matemáticas, \newline
		Escuela Politécnica Superior, \newline
		Universidad Carlos III de Madrid, \newline
		Avenida de la Universidad 30, 28911 Leganés, Madrid, Spain
	\newline
	and Instituto de Ciencias Matem\'aticas ICMAT (CSIC-UAM-UC3M-UCM),
	\newline Madrid,  Spain}
	\email{fragonza@math.uc3m.es}
	%    Address of record for the research reported here
	\thanks{The author is partially supported by Plan Nacional  I+D grant no. PID2022-137294NB-I00, Spain,
		the Spanish Ministry of Science and Innovation, through the ``Severo Ochoa Programme for Centres of Excellence in R\&D'' (CEX2019-000904-S) and from the Spanish National Research Council, through the ``Ayuda extraordinaria a Centros de Excelencia Severo Ochoa'' (20205CEX001).}
	
	%    General info
	\subjclass[2010]{Primary 47A15, 47B33, 47D06}
	
	\date{June 2024}
	
	\keywords{Composition operators, Hardy spaces, $C_0-$semigroups, commutants}
	
	\begin{abstract}
Let $C_\Fi$ be a composition operator acting on the Hardy space of the unit disc $H^p$  ($1\leq p < \infty$), which is embedded in a $C_0$-semigroup of composition operators $\mathcal{T}=(C_{\Fi_t})_{t\geq 0}.$ We investigate whether the commutant or the bicommutant of $C_\Fi$, or the commutant of the semigroup $\mathcal{T}$, are isomorphic to subalgebras of continuous functions defined on a connected set. In particular, it allows us to derive results about the existence of non-trivial idempotents (and non-trivial orthogonal projections if $p=2$) lying in such sets. Our methods also provide results concerning the minimality of the commutant and the double commutant property, in the sense that they coincide with the closure in the weak operator topology of the unital algebra generated by the operator. Moreover, some consequences regarding the extended eigenvalues and the strong compactness of such operators are derived. This extends previous results of Lacruz, León-Saavedra, Petrovic and Rodríguez-Piazza, Fernández-Valles and Lacruz and Shapiro on linear fractional composition operators acting on $H^2$.
	\end{abstract}
	\maketitle
	\section{Introduction and Preliminaries}\label{seccion introduccion}
For $1\leq p < \infty,$ we consider the classical Hardy space $H^p,$ which is the space formed by all the holomorphic functions $f:\D\rightarrow \C$ such that the quantity

$$\norm{f}_{H^p} = \left(\sup\limits_{0<r<1} \int_0^{2\pi} |f(re^{i\theta})|^p d\theta\right)^{1/p}$$
is finite. In particular, $H^p$ is a Banach space endowed with the norm $\norm{\cdot}_{H^p}$ for every $1\leq p < \infty.$

\medskip

In general, if $\X$ is a complex Banach space and $\EL(\X)$ is the Banach algebra of linear bounded operators acting on $\X$, we consider the \textit{commutant} and \textit{bicommutant} of an operator $T\in \EL(\X)$, defined as:
\begin{equation*}
\conm{T} = \{A \in \EL(\X): AT=TA \} \qquad \biconm{T} = \bigcap\limits_{A \in \conm{T}} \conm{A},
\end{equation*}
 respectively.
	
	\medskip
	
The study of the commutant and the bicommutant of an operator $T\in \EL(\X)$ provides important consequences regarding the structure of the operator. In this respect, the commutant and bicommutant of composition operators acting on $H^p$ have been widely considered in the recent years. Recall that, given a holomorphic map $\Fi:\D\rightarrow \D$, it defines a composition operator $C_\Fi$ that acts as $$C_\Fi f =f\circ \Fi, \qquad f \in H^p.$$ These operators are well known to be bounded on $H^p$ for all $1\leq p<\infty$ by means of the Littlewood's subordination Theorem.

Our starting point are the works of Lacruz, León-Saavedra, Petrovic and Rodríguez-Piazza \cite{LLPR, LLPR2}, in which they studied the minimality of the commutant and the bicommutant of composition operators $C_\Fi$ acting on $H^2$ induced by linear fractional maps. More specifically, they characterized whether $C_\Fi$ has \textit{minimal commutant}, in the sense that $$\overline{\alg(C_\Fi)}^\sigma = \conm{C_\Fi},$$ where $\alg(C_\Fi) = \{p(C_\Fi) : p \ \textnormal{is a polynomial}\}$ and $\overline{A}^\sigma$ stands for the closure in the weak operator topology of a set $A \subset \EL(\X)$. Likewise, they characterized whether $C_\Fi$ has the \textit{double commutant property}, in the sense that
$$ \overline{\alg(C_\Fi)} = \biconm{C_\Fi}.$$ For more on these properties and (bi)commutants of composition operators, see the references in the aforementioned works.

\medskip

In this work, we are primarily interested in studying the existence of non-trivial idempotents (that is, operators $J\in \EL(H^p)$ such that $J^2=J$ and $J\neq 0, Id_{H^p}$) lying in the commutant or the bicommutant of composition operators, and of non-trivial orthogonal projections, which are self-adjoint idempotents, whenever $p=2$.

\medskip

 Recall that, if such an idempotent  $J$ lies in the commutant (or respectively, in the bicommutant) of $T\in \EL(H^p)$, then the ranges of $J$ and its complement $I-J$, namely $\ran(J)$ $\ran(I-J)$, verify that $H^p = \ran(J)+\ran(I-J)$, and are non-trivial closed invariant subspaces for $T$ (or respectively, hyperinvariant subspaces, which are invariant under every operator $A\in \conm{T}$). If $p=2$ and $J$ is a non-trivial orthogonal projection, these subspaces turn out to be \textit{reducing} for the operator $T$, meaning that $\ran(I-J) = \ran(J)^\perp$ and $H^2=\ran(J)\oplus \ran(J)^\perp.$ 

\medskip

In this sense, the idea of studying the existence of reducing subspaces for composition operators is not novel. For instance, in \cite{Guyker} it is shown that every composition operator $C_\Fi\in \EL(H^2)$ such that $\Fi(0)=0$ has non-trivial reducing subspaces. Moreover,  in \cite{MPS} the lattice of invariant subspaces of composition operators induced by non-automorphic linear fractional parabolic maps acting on $H^2$ is characterized, deriving as a consequence the non-existence of non-trivial reducing subspaces for such operators. For more works in this line for different classes of operators, see \cite{GG3,GG4} and the references therein.

\medskip

The existence of idempotents lying on $\conm{C_\Fi}$ or $\biconm{C_\Fi}$ will be carried out by studying the \textit{functional commutant property} and the \textit{functional bicommutant property} for composition operators which, roughly speaking, ask such sets to be algebraically isomorphic to certain subalgebras of continuous functions. These general concepts, along with a result that connects this notion with the existence of non-trivial idempotents lying in $\conm{C_\Fi}$ or $\biconm{C_\Fi}$, will be introduced in Section \ref{seccion functional commutant}.

\medskip

Regarding composition operators, our methods provide results not only for linear fractional composition operators, but for a quite larger class of composition operators, namely those which can be embedded in a $C_0-$semigroup. Recall that a one parameter family $\mathcal{T}=(T_t)_{t\geq 0}\subset \EL(\X)$ is said to be a \textit{$C_0$}-semigroup if:
\begin{enumerate}
	\item $T_0 = Id_\X.$
	\item $T_{t+s} = T_tT_s$ for all $t,s\geq 0.$
	\item $T_t \rightarrow Id_\X$ in the strong operator topology when $t\rightarrow 0.$
\end{enumerate}
Notice that, given a $C_0-$semigroup, one may also study the existence of non-trivial idempotents in the commutant of the semigroup, that is, the Banach algebra
$$\mathcal{T}' = \bigcap\limits_{t> 0} \conm{T_t}.$$ Observe that, in general, if $T = T_{t_0}$ for some $t_0>0$, $$\biconm{T} \subseteq \mathcal{T}'\subseteq \conm{T}.$$

Now, in order to study $C_0-$semigroups of composition operators, let us introduce the \textit{holomorphic semiflows}. A one parameter family $(\Fi_t)_{t\geq 0}$ of holomorphic functions mapping $\D$ to itself is said to be a \textit{holomorphic semiflow} (or a semigroup of holomorphic functions in $\D$, or simply semigroup in $\D$) if the following conditions hold:
\begin{enumerate}
	\item [(i)] $\Fi_0(z) = z$ for every $z \in \D.$
	\item [(ii)] $\Fi_{t+s} = \Fi_t \circ \Fi_s$ for every $t,s \geq 0.$
	\item [(iii)] If $t\rightarrow s,$ then $\Fi_t \rightarrow \Fi_s$ uniformly on compact subsets of $\D$ for $t,s\geq 0$.
\end{enumerate}

It is well known, by the seminar work of Berkson and Porta \cite{BP}, that a one parameter family of composition operators $(C_{\Fi_t})_{t\geq 0}$ is a $C_0-$semigroup if and only if  $(\Fi_t)_{t\geq 0}$ is a holomorphic semiflow. It is worth remarking that every linear fractional map which is not loxodromic can be embedded in a holomorphic semiflow, that is, there exists a holomorphic flow $(\Fi_t)_{t\geq 0}$ such that $\Fi = \Fi_1$. In particular, its induced composition operator can be embedded in a $C_0-$semigroup. It is an open problem to decide whether a given holomorphic map $\Fi : \D\rightarrow \D$ can be embedded in a holomorphic semiflow (see, for instance, \cite[Section 4.3 and Chapter V]{Shoikhet}). 

\medskip

The holomorphic semiflows can be classified in terms of the configuration of common fixed points of the holomorphic maps that comprise it. A holomorphic semiflow $(\Fi_t)_{t\geq 0}$ is \textit{elliptic} if there exists $z_0 \in \D$ such that $\Fi_{t_0}(z_0) = z_0$ for some $t_0>0.$ If the holomorphic semiflow is not elliptic, then there exists a unique point $\tau \in \T$ such that $\Fi_t$ converges uniformly on compact subsets to the constant function $\tau$ when $t\rightarrow \infty$. This point $\tau$ is called the \textit{Denjoy-Wolff point} of $(\Fi_t)_{t\geq 0}.$ It is known that the functions $\Fi_t'$ has non-tangential limits $e^{t\al}$ at the Denjoy-Wolff point $\tau,$ with $\al \leq 0$. Thus, the holomorphic semiflow is said to be \textit{hyperbolic} if $\al < 0$ and \textit{parabolic} if $\al = 0.$

\medskip

Our results regarding the (bi)commutant of composition operators which can be embedded in $C_0$-semigroups will depend on the nature of the holomorphic semiflow, regarding the classification discussed above. In order to present our main results, we need to introduce some geometric aspects of the semiflows.

\medskip

For every non-elliptic holomorphic semiflow $(\Fi_t)_{t\geq 0}$ there exists a univalent function $h:\D\rightarrow \C$  such that 
\begin{equation}\label{koenigs function}
	\Fi_t(z) = h^{-1}(h(z)+t) \qquad  z \in \D, \ t\geq 0.
\end{equation}
Such a function $h$ is called the \textit{Koenigs function} of $(\Fi_t)_{t\geq 0}$. The set $\Omega := h(\D)$ is said to be the \textit{Koenigs domain} of $(\Fi_t)_{t\geq 0}.$ Such a domain is \textit{convex at infinity}, that is, if $z \in \Omega$ and $t\geq 0$, then $z+t \in \Omega$. Moreover, if there exists another univalent map $g:\D\rightarrow \C$ satisfying $g(\D)=h(\D)$ and \eqref{koenigs function}, then $g(z) = h(z)+a$ for some $a \in \C$ \cite[Proposition 9.3.10]{BCD}.

\medskip

The geometry of the Koenigs domain encodes valuable information for the associated holomorphic semiflow. For instance, a non-elliptic holomorphic semiflow is hyperbolic if and only if its Koenigs domain is contained in a horizontal strip \cite{CD}. In this sense, the geometry of the Koenigs domain will also play a fundamental role in our results.

\medskip

Now, we can state our main result regarding hyperbolic semiflows, which are studied in Section \ref{seccion hiperbolicos}. It reads as follows:

\medskip

\noindent \textbf{Theorem 1.} (Theorems \ref{teorema hiperbolico} and \ref{teorema conmutante no abeliano hiperbolico}). \textit{Let $\Fi : \D \rightarrow\D$ be a holomorphic function such  that there exists a hyperbolic  flow $(\Fi_t)_{t\geq 0}$ with $\Fi = \Fi_1.$ Then, for every $1\leq p < \infty,$
	\begin{enumerate}
		\item [(i)] Let $\Omega$ be the associated Koenigs domain of $(\Fi_t)_{t\geq 0}$  and assume that $\inte(\overline{\Omega}) = \Omega$ and that $\C\setminus \overline{\Omega}$ has at most two connected components.  Then, $C_\Fi \in \EL(H^p)$ has the functional bicommutant property. In particular, there exist no non-trivial idempotents lying in $\biconm{C_\Fi}.$
		\item [(ii)] The commutant of $C_\Fi \in \EL(H^p)$ is not abelian, so $C_\Fi$ does not have the functional commutant property or minimal commutant.
\end{enumerate}}
 We also show that, under the hypotheses on ($i$), if $\mathcal{T} = (C_{\Fi_t})_{t\geq 0}\subset \EL(H^p)$, then $\mathcal{T}'$ does not contain any non-trivial idempotents (Corollary \ref{corolario conmutante semigrupo hiperbolico}). Moreover, we also prove a result providing sufficient conditions for $C_\Fi$ to have the double commutant property (Theorem \ref{teorema doble conmutante hiperbólico}). 

\medskip

Parabolic semiflows are considered in Section \ref{seccion parabolicos}. In order to state our main results in this setting, we need to distinguish two cases.  A parabolic semiflow $(\Fi_t)_{t\geq 0}$ is said to be of \textit{positive hyperbolic step} if for some $z \in \D$ the quantity $$\lim\limits_{t\rightarrow +\infty} k_\D(\Fi_t(z),\Fi_{t+1}(z))$$ is positive, where $k_\D$ denotes the hyperbolic distance in $\D$. If the previous situation does not hold, $(\Fi_t)_{t\geq 0}$ is of \textit{zero hyperbolic step}.

\medskip

Our main result regarding the bicommutant is the following:

\medskip

\noindent \textbf{Theorem 2.} (Theorem \ref{teorema parabolico}). \textit{Let $\Fi : \D \rightarrow\D$ be a holomorphic function such  that there exists a parabolic semiflow $(\Fi_t)_{t\geq 0}$ with $\Fi = \Fi_1.$ Let $\Omega$ be its Koenigs domain. Assume that either
	\begin{enumerate}
		\item [(i)] $(\Fi_t)_{t\geq 0}$ is of positive hyperbolic step, $\inte(\overline{\Omega})=\Omega$ and $\C\setminus\overline{\Omega}$ is connected; or
		\item [(ii)]  $(\Fi_t)_{t\geq 0}$ is of zero hyperbolic step, $\Omega$ is contained in a half-plane and $\inte(\overline{\Omega})= \Omega.$
	\end{enumerate}
	Then,  $C_\Fi \in \EL(H^p)$ has the functional bicommutant property for $1\leq p<\infty$. In particular, there exist no non-trivial idempotents lying in $\biconm{C_\Fi}.$}

\medskip

As in the hyperbolic case, the previous result can also be stated for the commutant of $\mathcal{T} = (C_{\Fi_t})_{t\geq 0}$ (Corollary \ref{corolario conmutante semigrupo parabolico}). On the other hand, the situation for $\conm{C_\Fi}$ can be quite different, regarding the geometric picture of the Koenigs domain:

\medskip

\noindent \textbf{Theorem 3.} (Theorems \ref{functional commutant parabolico} and \ref{conmutante abeliano parabolico}). \textit{Let $\Fi : \D \rightarrow\D$ be a holomorphic function such  that there exists a parabolic semiflow $(\Fi_t)_{t\geq 0}$ with $\Fi = \Fi_1,$ and let $\Omega$ be its Koenigs domain.
	\begin{enumerate}
\item [(i)] If $\Omega$ is contained in a half plane, $\inte(\overline{\Omega})=\Omega$ and $\sumn 1-|\Fi_n(0)| = \infty,$ then $C_\Fi \in \EL(H^p)$ has the functional commutant property for $1\leq p<\infty$. In particular, there exist no non-trivial idempotents lying in $\conm{C_\Fi}.$
\item [(ii)] If $\Omega$ is contained in a horizontal half-plane of the form $\{w\in\C: \Impart(w)> -\eta\}, \eta >0$, $\conm{C_\Fi}$ is not abelian. In particular, $C_\Fi$ does not have the functional commutant property or minimal commutant.
	\end{enumerate} }

\medskip

As a particular instance, $(i)$ can be applied to deduce that, if $\Fi$ is a non-automorphic parabolic linear fractional map, then $C_\Fi\in \EL(H^p)$ does not commute with any non-trivial idempotent (Corollary \ref{corolario parabolico no-automorfismo}), which extends the aforementioned result of Montes-Rodríguez, Ponce-Escudero and Shkarin.

\medskip

For the elliptic semiflow case, which is considered in Section \ref{seccion elipticos}, we study the more general situation of holomorphic maps $\Fi:\D\rightarrow \D$ fixing a point in $\D$. In this respect, our main result is the following: 

\medskip

\noindent \textbf{Theorem 4.} (Theorem \ref{teorema eliptico}). \textit{Let $\Fi :\D\rightarrow \D$ be a holomorphic function with a fixed point in $\D$ and consider $C_\Fi \in \EL(H^p)$, $1< p < \infty.$ Then, there exist non-trivial idempotents lying in $\biconm{C_\Fi}.$ As a matter of fact, $C_\Fi$ does not have the functional commutant or bicommutant property.}

\medskip

Let us point out that our methods rely on some properties of the infinitesimal generator of the $C_0-$semigroup $(C_{\Fi_t})_{t\geq 0},$ which is a linear closed densely-defined unbounded operator given by 
\begin{equation}\label{expresion generador infinitesimal}
\Delta f = \lim\limits_{t\to 0} \frac{C_{\Fi_t}f-f}{t} =Gf' \qquad (f\in \Dom(\Delta)),
\end{equation}
 where $G$ is a unique holomorphic function $G:\D\rightarrow \C$ satisfying $$\frac{\partial\Fi_t(z)}{\partial t} = G(\Fi_t(z)), \qquad (t\geq 0, z \in \D).$$

 A remarkable property is that the point spectrum of $\Delta$ can be related with the point spectrum of the operators of the semigroup. More specifically, the following equality
\begin{equation}\label{spectral mapping theorem point spectrum}
	\sigma_p(C_{\Fi_t}\mid_{H^p})  = \{ e^{t\lambda}: \lambda\in \sigma_p(\Delta\mid_{H^p}) \} \qquad (t>0)
\end{equation}
follows from \cite[Chapter IV, Theorem 3.7]{EN} and the fact that $C_{\Fi_t}$ is injective for all $t>0.$  It is also notable that $\lambda \in \sigma_p(\Delta)$ if and only if the function $e^{\lambda h}$ belongs to $H^p$, where $h$ is the Koenigs function of $(\Fi_t)_{t\geq 0}.$ In such a case, the eigenspace $\ker(\Delta-\lambda I)$ is one-dimensional and it is spanned by $e^{\lambda h}$ \cite{Siskakis}. 

\medskip

The characterization of the point spectrum of the infinitesimal generator for non-elliptic semigroups, which was carried out by Betsakos in \cite{Betsakos}, will be essential to state our results, along with the results of Bracci, Gallardo-Gutiérrez and Yakubovich \cite{BGY}, who established results about the density of the eigenfunctions of the infinitesimal generator. Since these results depend on the kind of holomorphic semiflows we are considering, we will present these statements separately for hyperbolic semiflows and parabolic semiflows.

\medskip

It is also worth mentioning that our methods will also provide characterizations of the extended eigenvalues and the strong compactness of such operators (see Subsections \ref{subseccion 1} and \ref{subseccion 2}). These two notions have been studied for linear fractional composition operators in $H^2$ in \cite{FL, LLPR3, Shapiro}. For more on the subjects, see the references in these works.

\section{The functional (bi)commutant property}\label{seccion functional commutant}
In this section, we introduce the functional commutant property and the functional bicommutant property, and study their connection with the existence of non-trivial idempotents lying in such sets. In order to introduce these concepts, given a subset $G \subset \C$, we denote by $\mathcal{C}(G)$ the algebra consisting of all continuous functions $f:G \rightarrow \C$, equipped with the pointwise sum and multiplication of functions. Notice that we are not endowing $\mathcal{C}(G)$ with any topology, since we are only interested in the algebraic properties of the set. 

\begin{definition}\label{definicion functional commutant}
	We say that an operator $T\in \EL(X)$ has the \textit{functional commutant property} (respectively, \textit{functional bicommutant property}) if there exist a connected subset $G \subset \C$, a subalgebra $\A\subseteq \mathcal{C}(G)$ and an algebraic isomorphism $\Phi : \A \rightarrow \conm{T}$ (respectively, $\Phi: \A \rightarrow \biconm{T}).$
\end{definition}
Although this definition is, at least to the author's knowledge, novel in the literature, the concept of the functional commutant (or bicommutant) property for an operator has been implicitly studied and already encompasses several examples.

\medskip

For instance, in \cite{SW} the authors showed that under quite reasonable hypothesis on a Hilbert space of holomorphic functions, $\conm{M_z} = H^\infty(\D)$, and it is easy to construct an algebra isomorphism $\Phi : H^\infty(\D)\rightarrow \conm{M_z}$, so the operator $M_z$ has the  functional commutant property. This can also be used to prove that the classical Cesàro operator has the functional commutant property. Similar ideas are also managed by Sarason \cite{Sarason} when describing the commutants of the Volterra operator on $H^2$ and of some truncated Toeplitz operators. On the other hand, it was proved in \cite[Theorems 3.4 and 5.6]{LLPR2} that the composition operator $C_\Fi$ has the functional bicommutant property whenever $\Fi$ is a  hyperbolic linear fractional map with no fixed points in $\D$.

\medskip

Now, let us state the aforementioned relation between  the functional commutant (or bicommutant) property  and the existence of idempotents lying in such algebras. 

\begin{theorem}\label{main result}
	Let $T\in \EL(\X)$ and assume it has the functional commutant property (respectively, the functional bicommutant property). Then, there exist no non-trivial idempotents lying in $\conm{T}$ (respectively, $\biconm{T}$). 
\end{theorem}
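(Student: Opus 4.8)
The plan is to exploit the fact that a connected domain $G$ carries no nontrivial idempotents in $\mathcal{C}(G)$, and transport this through the algebra isomorphism $\Phi$. Concretely, suppose $T$ has the functional commutant property, so there is a connected set $G\subseteq\C$, a subalgebra $\A\subseteq\mathcal{C}(G)$, and an algebra isomorphism $\Phi:\A\rightarrow\conm{T}$. If $J\in\conm{T}$ is an idempotent, $J^2=J$, then $f:=\Phi^{-1}(J)\in\A$ satisfies $f^2=f$ (algebra isomorphisms preserve multiplication), hence $f(z)\in\{0,1\}$ for every $z\in G$. Since $f$ is continuous and $G$ is connected, $f(G)$ is a connected subset of the two-point set $\{0,1\}$, so $f$ is identically $0$ or identically $1$ on $G$.

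The next step is to identify these two functions with the trivial idempotents $0$ and $Id_\X$ of $\conm{T}$. The zero function of $\A$ is the additive identity, and $\Phi$ is in particular additive, so $\Phi(0)=0\in\EL(\X)$; thus $f\equiv 0$ forces $J=0$. For the constant function $1$: one must be slightly careful because $\A$ need not contain the constant function $1$ of $\mathcal{C}(G)$, but $\A$ does have a multiplicative identity, namely $e:=\Phi^{-1}(Id_\X)$ (since $Id_\X$ is the unit of $\conm{T}$ and $\Phi$ is an algebra isomorphism onto $\conm{T}$, which contains $Id_\X$). The function $e\in\A$ is idempotent, $e^2=e$, so by the connectedness argument $e$ is identically $0$ or identically $1$ on $G$; it cannot be identically $0$, since then $Id_\X=\Phi(e)=\Phi(0)=0$ would force $\X=\{0\}$, a degenerate case excluded implicitly (or one simply notes $\Phi$ is injective so $e\neq 0$). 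Hence $e\equiv 1$ on $G$, i.e.\ the constant function $1$ lies in $\A$ and is the unit $e$. Therefore if $f\equiv 1$ then $f=e$ and $J=\Phi(e)=Id_\X$. Either way $J$ is trivial, which proves the claim for the commutant. The bicommutant case is identical, replacing $\conm{T}$ by $\biconm{T}$ everywhere and using that $Id_\X\in\biconm{T}$.

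The only genuinely delicate point is the treatment of the unit: one must not assume $1\in\A$ a priori, but must instead recover it as $\Phi^{-1}(Id_\X)$ and then argue, again via connectedness, that this idempotent element of $\A$ must be the constant $1$. Everything else is a routine transfer of algebraic identities along the isomorphism together with the elementary topological fact that a $\{0,1\}$-valued continuous function on a connected set is constant. I would also remark that no topology on $\A$ or $\mathcal{C}(G)$ is used, consistent with Definition~\ref{definicion functional commutant}; purely algebraic and point-set considerations suffice.

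\begin{proof}
We prove the statement for the commutant; the argument for the bicommutant is verbatim the same with $\conm{T}$ replaced by $\biconm{T}$, using that $Id_\X\in\biconm{T}$. Assume $T$ has the functional commutant property and let $G\subseteq\C$ be connected, $\A\subseteq\mathcal{C}(G)$ a subalgebra, and $\Phi:\A\rightarrow\conm{T}$ an algebra isomorphism. Since $Id_\X\in\conm{T}$, the element $e:=\Phi^{-1}(Id_\X)\in\A$ is the multiplicative identity of $\A$; in particular $e^2=e$, so $e(z)\in\{0,1\}$ for all $z\in G$. As $e$ is continuous and $G$ is connected, $e$ is constant on $G$, equal to $0$ or to $1$. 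If $e\equiv 0$ then, $\Phi$ being additive, $\Phi(e)=\Phi(0)=0$, whence $Id_\X=0$ and $\X=\{0\}$; discarding this trivial case we have $e\equiv 1$, so the constant function $1$ belongs to $\A$ and equals the unit $e$.

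Now let $J\in\conm{T}$ be an idempotent, $J^2=J$, and put $f:=\Phi^{-1}(J)\in\A$. Since $\Phi$ is an algebra isomorphism, $f^2=f$, so $f(z)\in\{0,1\}$ for every $z\in G$; by continuity of $f$ and connectedness of $G$, $f$ is constant, either $f\equiv 0$ or $f\equiv 1=e$. In the first case $J=\Phi(0)=0$; in the second $J=\Phi(e)=Id_\X$. Hence $\conm{T}$ contains no non-trivial idempotents.
\end{proof}
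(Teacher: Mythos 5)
Your proof is correct and follows essentially the same route as the paper: transport the idempotent through the isomorphism $\Phi$ and use that a continuous $\{0,1\}$-valued function on the connected set $G$ is constant. The only difference is that you spell out the identification of the unit of $\A$ with the constant function $1$ (via $e=\Phi^{-1}(Id_\X)$ and the same connectedness argument), a point the paper passes over as ``straightforward to check''; this is a welcome extra precision but not a different argument.
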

\begin{proof}
	Let us prove the result for the commutant, the proof for the bicommutant is analogous. By contradiction, assume that there exists a non-trivial idempotent $J\in \conm{T}.$ Thus, since $T$ has the functional commutant property, there exist a connected subset $G \subset \C$, a subalgebra $\A\subseteq \mathcal{C}(G)$ and an algebra isomorphism $\Phi : \A \rightarrow\conm{C_\Fi}.$ In particular, there exists $f\in \A$ such that $\Phi(f) = J.$ Since $\Phi$ is an algebra isomorphism, it is straightforward to check that $f$ is also a non-trivial idempotent, that is, $f^2 = f$ and $f$ is not the zero or the constant function $f\equiv 1.$ But $f$ is a continuous function defined on a connected set $G,$ so $f^2 = f$ implies that $f\equiv 0$ of $f\equiv 1,$ which yields the desired contradiction.
\end{proof}
To finish this section, let us state a simple lemma that will be important in what follows:

\begin{lemma}\label{lema conmutante abeliano}
	Let $T\in \EL(X)$ and suppose that either
	\begin{enumerate}
		\item [(i)] $T$ has the functional commutant property; or
		\item [(ii)]  $T$ has minimal commutant.
	\end{enumerate}
 Then, $\conm{T}$ is abelian and $\conm{T}= \biconm{T}.$
\end{lemma}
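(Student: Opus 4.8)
The plan is to prove both conclusions—that $\conm{T}$ is abelian and that $\conm{T}=\biconm{T}$—by first reducing case (i) to case (ii) and then handling case (ii) directly. Indeed, if $T$ has the functional commutant property, then $\conm{T}$ is algebraically isomorphic to a subalgebra $\A$ of $\mathcal{C}(G)$; since $\mathcal{C}(G)$ is a commutative algebra under pointwise multiplication, so is every subalgebra, and an algebra isomorphism transports commutativity. Hence $\conm{T}$ is abelian. So in case (i) the first claim is immediate, and the heart of the matter is to show $\conm{T}=\biconm{T}$, which I will extract as a consequence of $\conm{T}$ being abelian together with the general fact recorded below; this simultaneously covers case (ii) once I show minimal commutant also forces $\conm{T}$ to be abelian.

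For case (ii): suppose $\overline{\alg(T)}^\sigma = \conm{T}$. The unital algebra $\alg(T)$ generated by $T$ is obviously commutative (polynomials in a single operator commute), and multiplication $\EL(\X)\times\EL(\X)\to\EL(\X)$ is separately continuous in the weak operator topology. Therefore the WOT-closure of a commutative subalgebra is again commutative: given $A,B\in\overline{\alg(T)}^\sigma$, take nets $p_i(T)\to A$ and $q_j(T)\to B$ in WOT; first fix $A$ and use separate continuity to get $A q_j(T)=q_j(T)A$ in the limit over $j$, so $AB'=B'A$ for $B'=\lim q_j(T)$... more carefully: for each polynomial $q$, $p_i(T)q(T)=q(T)p_i(T)$, and letting $i\to\infty$ gives $A\,q(T)=q(T)\,A$; then letting the net $q_j$ run gives $AB=BA$. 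Thus $\conm{T}=\overline{\alg(T)}^\sigma$ is abelian. So in both cases $\conm{T}$ is an abelian algebra.

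It remains to see that $\conm{T}$ abelian implies $\conm{T}=\biconm{T}$. This is a standard and short argument: by definition $\biconm{T}=\bigcap_{A\in\conm{T}}\conm{A}$. If $\conm{T}$ is abelian, then every element of $\conm{T}$ commutes with every element of $\conm{T}$; that is, for each $A\in\conm{T}$ we have $\conm{T}\subseteq\conm{A}$, whence $\conm{T}\subseteq\bigcap_{A\in\conm{T}}\conm{A}=\biconm{T}$. The reverse inclusion $\biconm{T}\subseteq\conm{T}$ always holds, because $T\in\conm{T}$ and any operator in $\biconm{T}$ must in particular commute with $T$. Combining the two inclusions gives $\conm{T}=\biconm{T}$, completing the proof.

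I do not anticipate a genuine obstacle here; the lemma is essentially bookkeeping. The only point requiring a modicum of care is the WOT-closure step in case (ii)—one must invoke separate (not joint) continuity of operator multiplication in the weak operator topology and pass to the limit one variable at a time, exactly as sketched above. Everything else is formal manipulation with the definitions of commutant, bicommutant, and algebra isomorphism, plus the trivial observation that $\mathcal{C}(G)$ is commutative.
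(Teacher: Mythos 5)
Your proof is correct; the paper states this lemma without proof (calling it ``a simple lemma''), and your argument supplies exactly the natural reasoning it relies on: commutativity of subalgebras of $\mathcal{C}(G)$ transported by the algebra isomorphism in case (i), commutativity of $\overline{\alg(T)}^\sigma$ via separate WOT-continuity of multiplication in case (ii), and the formal observation that an abelian $\conm{T}$ satisfies $\conm{T}\subseteq\biconm{T}$ while $\biconm{T}\subseteq\conm{T}$ always holds since $T\in\conm{T}$. No gaps.
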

\section{Hyperbolic semiflows}\label{seccion hiperbolicos}

We begin by presenting a slight reformulation of the description of the point spectrum of the infinitesimal generators associated to hyperbolic semigroups, which was carried out by Betsakos:

\begin{theorem}\label{teorema betsakos} \cite{Betsakos}
	Let $(\Fi_t)_{t\geq 0}$ be a hyperbolic semiflow, let  $\Omega$ be the associated Koenigs domain and $\gamma$ be the width of the smallest horizonal strip containing $\Omega.$ Let $(C_{\Fi_t})_{t\geq 0}$ be the corresponding $C_0$-semigroup of composition operators acting on $H^p$ for $1\leq p < \infty,$ and let $\Delta$ be its infinitesimal generator.
	\begin{enumerate}
		\item [(i)] If $\Omega$ does not contain any horizontal strip, then
		$$ \{ \lambda \in \C: - \infty < \PR(\lambda)< \frac{\pi}{p\gamma} \} \subseteq \sigma_p(\Delta\mid_{H^p})\subseteq \{ \lambda \in \C: - \infty < \PR(\lambda) \leq \frac{\pi}{p\gamma} \}.$$
		\item If $\Omega$ contains a horizontal strip, there exists a horizontal strip contained in $\Omega$ with width 
		$$\beta_{\textnormal{max}} = \max \{ \beta>0 : \Omega \ \textnormal{contains a horizontal strip of width } \beta \}.$$
		Moreover,
		$$ \{ \lambda \in \C: - \frac{\pi}{p\beta_{\textnormal{max}}} <\PR(\lambda)<\frac{\pi}{p\gamma} \} \subseteq \sigma_p(\Delta\mid_{H^p})\subseteq \{ \lambda \in \C: - \frac{\pi}{p\beta_{\textnormal{max}}} <\PR(\lambda)\leq\frac{\pi}{p\gamma} \}.$$
	\end{enumerate}
\end{theorem}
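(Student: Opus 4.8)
The theorem is, up to notation, the description of $\sigma_p(\Delta\mid_{H^p})$ obtained by Betsakos in \cite{Betsakos}; the plan is to recall the mechanism behind it and to check that the relevant quantities are indeed the width $\gamma$ of the smallest horizontal strip containing $\Omega$ and the width $\beta_{\max}$ of the largest horizontal strip contained in $\Omega$, with the expected factor $1/p$. First I would reduce everything to a one‑dimensional integrability problem. By the remarks in Section~\ref{seccion introduccion} (see \cite{Siskakis}), $\lambda\in\sigma_p(\Delta\mid_{H^p})$ if and only if $e^{\lambda h}\in H^p$, where $h$ is the Koenigs function and $\Omega=h(\D)$. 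Since $(\Fi_t)_{t\ge0}$ is hyperbolic, $\Omega$ lies in a horizontal strip \cite{CD}, and after a vertical translation we may assume $\Omega\subseteq S_\gamma:=\{w\in\C:0<\Impart(w)<\gamma\}$ with $\gamma$ minimal. As $|e^{\lambda h(z)}|^{p}=e^{p\PR(\lambda)\PR h(z)}\,e^{-p\Impart(\lambda)\Impart h(z)}$ and $0\le\Impart h\le\gamma$, the second factor is bounded above and below by positive constants; hence $e^{\lambda h}\in H^p$ depends only on $a:=\PR(\lambda)$, and one is left to identify $I:=\{a\in\R: e^{ah}\in H^p\}$. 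Splitting $\T$ according to the sign of $\PR h$ shows $I$ is an interval containing $0$, and $e^{ah}\in H^p$ amounts to integrability over $\T$ of $e^{pa\,\PR h}$, i.e.\ to a decay at rate exceeding $p|a|$ of the distribution function of $\PR h\mid_\T$ on the appropriate side.

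\textbf{Pinning down $I$.} For the inner inclusions I would use subordination. For $a\ge0$, take a conformal map $\psi:\D\to S_\gamma$; since $\Omega\subseteq S_\gamma$, the map $\psi^{-1}\circ h$ is a self‑map of $\D$, so $h$ is subordinate to $\psi$ and Littlewood's subordination principle applied to the subharmonic function $w\mapsto|e^{aw}|^{p}$ gives $\norm{e^{ah}}_{H^p}\le C\norm{e^{a\psi}}_{H^p}$, which is finite exactly when $pa\gamma/\pi<1$ (the boundary singularity of $e^{a\psi}$ being $|1-\zeta z|^{-a\gamma/\pi}$ at the prime end of $S_\gamma$ over $+\infty$). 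Hence $[0,\pi/(p\gamma))\subseteq I$. Symmetrically, if $\Omega$ contains a strip $T'$ of maximal width $\beta_{\max}$, the strip map of $T'$ is subordinate to $h$, so $e^{ah}\notin H^p$ whenever $e^{a\psi'}\notin H^p$, i.e.\ whenever $a\le-\pi/(p\beta_{\max})$. For the sharp endpoints I would invoke the Ahlfors distortion theorem. Since $\Omega$ is convex at infinity, the vertical cross‑section $\Theta(x)$ of $\Omega$ at real part $x$ is nondecreasing in $x$, with $\Theta(x)\uparrow\gamma$ as $x\to+\infty$; thus the extremal distance between the cross‑sections at $0$ and at $R$ equals $\int_0^R dx/\Theta(x)+O(1)=R/\gamma+o(R)$, so the harmonic measure of $\{\PR h>R\}$ is $e^{-\pi R/\gamma+o(R)}$, which forces $\sup I=\pi/(p\gamma)$. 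On the left, $\Theta(x)\downarrow|J_{-\infty}|$ as $x\to-\infty$, where $|J_{-\infty}|=0$ precisely when $\Omega$ contains no strip and $|J_{-\infty}|=\beta_{\max}$ when it does; in the first case the harmonic measure of $\{\PR h<-R\}$ decays faster than $e^{-cR}$ for every $c>0$, giving $I\supseteq(-\infty,\pi/(p\gamma))$, and in the second it is $e^{-\pi R/\beta_{\max}+o(R)}$, giving $e^{ah}\in H^p$ for $-\pi/(p\beta_{\max})<a\le0$. Combining these yields exactly the two displayed chains of inclusions — the endpoint $\pi/(p\gamma)$ remaining undecided because the subordination estimate there is vacuous and the $o(R)$ above can go either way, while the endpoint $-\pi/(p\beta_{\max})$ is genuinely excluded because of the contained strip $T'$.

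\textbf{Main obstacle.} The reduction and the subordination inequalities are routine. The real content is the sharp dictionary between the geometry of $\Omega$ near its prime ends and the tail behaviour of the distribution function of $\PR h\mid_\T$: that the cross‑section widths, which are monotone by convexity at infinity, control the harmonic‑measure decay exactly, including the delicate statement that disconnected or irregular cross‑sections do not spoil the estimate. This is precisely what is carried out in \cite{Betsakos}, so in the write‑up I would simply quote it; the only new content is recording that the two widths entering the statement are ``smallest strip containing $\Omega$'' and ``largest strip contained in $\Omega$'', and tracking the factor $1/p$ through the computation of $\norm{e^{a\psi}}_{H^p}$.
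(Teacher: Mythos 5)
This statement is not proved in the paper at all: it is quoted (as a ``slight reformulation'') directly from Betsakos \cite{Betsakos}, and since your write-up likewise reduces to quoting that reference for the sharp geometric/harmonic-measure estimates, your approach is essentially the same as the paper's. Your surrounding sketch (the equivalence $\lambda\in\sigma_p(\Delta\mid_{H^p})\Leftrightarrow e^{\lambda h}\in H^p$, the reduction to $\PR(\lambda)$ via the strip bound on $\Impart h$, subordination for the inclusion $[0,\pi/(p\gamma))$ and for excluding $\PR(\lambda)\le -\pi/(p\beta_{\textnormal{max}})$) is sound, but be aware that your identification of the limiting cross-section measure with $\beta_{\textnormal{max}}$ can fail when cross-sections are disconnected (e.g.\ a horizontal strip with a leftward horizontal slit removed), so the naive Ahlfors integral does not by itself give the inner left inclusion --- this is exactly the delicate point you rightly delegate to \cite{Betsakos}.
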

Now, we state a result about the density of the linear manifolds spanned by the eigenvectors of the infinitesimal generator $\Delta$. This result is just a small modification of \cite[Theorem 1.1]{BGY}, in order to consider only the interior of the point spectrum to obtain the density of certain linear manifolds spanned by eigenvectors:
\begin{proposition}\cite{BGY}\label{proposicion densidad}
	Let $(\Fi_t)_{t\geq 0}$ be a hyperbolic semiflow, let $\Omega$ be its Koenigs domain and assume that $\inte(\overline{\Omega}) = \Omega$ and that $\C\setminus \overline{\Omega}$ has at most two connected components. Then, if $h$ is the Koenigs function associated to $(\Fi_t)_{t\geq 0}$, the linear manifold $$\Span \{e^{\lambda h} : \lambda \in \inte(\sigma_p(\Delta\mid_{H^p})) \}$$ is dense in $H^p$ for $1\leq p < \infty.$
\end{proposition}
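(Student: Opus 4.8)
The statement is essentially that of \cite[Theorem 1.1]{BGY}, with the sole modification that we want the linear span of eigenfunctions whose eigenvalue lies in the \emph{interior} $\inte(\sigma_p(\Delta\mid_{H^p}))$ — rather than the full point spectrum — to be dense in $H^p$. Recall from the discussion following \eqref{spectral mapping theorem point spectrum} that $\lambda\in\sigma_p(\Delta\mid_{H^p})$ precisely when $e^{\lambda h}\in H^p$, and in that case $\ker(\Delta-\lambda I)=\C\,e^{\lambda h}$. By Theorem \ref{teorema betsakos}, in the hyperbolic case $\sigma_p(\Delta\mid_{H^p})$ is a vertical strip $\{\lambda:\ a<\PR(\lambda)<b\}$ together with possibly part of its right-hand boundary line $\PR(\lambda)=b=\pi/(p\gamma)$ (and, when $\Omega$ contains a horizontal strip, possibly a left boundary situation governed by $\beta_{\max}$; in either case the set differs from an open vertical strip only by a subset of the two vertical boundary lines). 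Hence $\inte(\sigma_p(\Delta\mid_{H^p}))=\{\lambda:\ a<\PR(\lambda)<b\}$ is an open vertical strip, and it suffices to show density of $\Span\{e^{\lambda h}:\ a<\PR(\lambda)<b\}$.

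First I would invoke the argument of \cite{BGY} verbatim to get density of $\Span\{e^{\lambda h}:\ \lambda\in\sigma_p(\Delta\mid_{H^p})\}$ in $H^p$ under the stated hypotheses $\inte(\overline\Omega)=\Omega$ and $\#(\C\setminus\overline\Omega)\le 2$ components. Then the task reduces to the following purely one-variable observation: removing finitely many vertical boundary lines from the parameter set does not destroy density. The cleanest way is to show that each boundary eigenfunction $e^{\lambda_0 h}$ with $\PR(\lambda_0)\in\{a,b\}$ already lies in the closure of $\Span\{e^{\lambda h}:\ a<\PR(\lambda)<b\}$. For this I would fix $\lambda_0$ with $\PR(\lambda_0)=b$ (say $e^{\lambda_0 h}\in H^p$), pick a sequence $\lambda_n\to\lambda_0$ with $a<\PR(\lambda_n)<b$, and prove $e^{\lambda_n h}\to e^{\lambda_0 h}$ in $H^p$. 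Writing $e^{\lambda_n h}-e^{\lambda_0 h}=e^{\lambda_0 h}\big(e^{(\lambda_n-\lambda_0)h}-1\big)$ and using that $|e^{(\lambda_n-\lambda_0)h}|\le e^{|\PR(\lambda_n-\lambda_0)|\,|h|}$, one needs a uniform integrability / dominated-convergence bound; the real-part control $\PR(\lambda_n),\PR(\lambda_0)\le b=\pi/(p\gamma)$ together with $e^{b h}\in H^p$-type estimates from Betsakos's analysis (the function $h$ maps into the strip, so $\PR(h)$ drives integrability while $\Impart(h)$ is, roughly, the bounded coordinate) should give a dominating function in $L^p(\T)$ independent of $n$, so that $e^{\lambda_n h}\to e^{\lambda_0 h}$ on the boundary and hence in $H^p$.

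The main obstacle is precisely this last approximation at the \emph{boundary of the strip}: when $\PR(\lambda_0)=\pi/(p\gamma)$ exactly, $e^{\lambda_0 h}$ is a ``critical'' element of $H^p$ and one must check that approaching $\lambda_0$ from within the open strip converges in norm, not merely pointwise — i.e. that there is no loss of mass at $\T$. This is where the hypothesis $\inte(\overline\Omega)=\Omega$ (which prevents pathological boundary behaviour of $h$) is used again, and where one may need to quote the finer estimates on $|h(re^{i\theta})|$ near $\T$ that underlie Theorem \ref{teorema betsakos}. If a direct dominated-convergence bound is awkward, an alternative is to avoid boundary eigenfunctions altogether: observe that the \cite{BGY} density proof actually produces, for any $f\in H^p$ and $\varepsilon>0$, an approximant that is a finite combination of $e^{\lambda h}$ with all $\lambda$ in a \emph{compact} subset of the open strip (e.g. because the integral representations there can be truncated), so that density of the interior-span follows immediately without touching the boundary lines. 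I would present whichever of these two routes the \cite{BGY} machinery makes most transparent, noting that it is ``a small modification'' as stated.
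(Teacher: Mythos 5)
There is a genuine gap, and it sits exactly where you delegate the work: your step (a), ``invoke \cite[Theorem 1.1]{BGY} verbatim to get density of $\Span\{e^{\lambda h}:\lambda\in\sigma_p(\Delta\mid_{H^p})\}$ in $H^p$,'' is not what that theorem provides. As used in the paper, \cite[Theorem 1.1]{BGY} is a statement about $H^\infty(\Omega)$: under the hypotheses $\inte(\overline{\Omega})=\Omega$ and $\C\setminus\overline{\Omega}$ having at most two connected components, the span of the \emph{bounded} exponentials $\{e^{\lambda z}\in H^\infty(\Omega):\lambda\in\C\}$ is weak-star dense in $H^\infty(\Omega)$. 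The real content of the paper's proof is precisely the bridge you skip: transfer the problem through the Koenigs map to the Hardy space $H^p(\Omega)$; note (using convexity at infinity and the fact that $\Omega$ lies in a horizontal strip) that the bounded exponential frequencies correspond to eigenvalues lying in $\inte(\sigma_p(\Delta\mid_{H^p}))$; upgrade weak-star density in $H^\infty(\Omega)$ to weak density in $H^p(\Omega)$ and then, by Mazur's lemma, to norm density for $1<p<\infty$; and obtain $p=1$ from the continuous embedding $H^p(\Omega)\hookrightarrow H^1(\Omega)$. Without this transfer argument your step (a) is an unproved, and in fact stronger, assertion: it concerns all $\lambda$ with $e^{\lambda h}\in H^p$, a $p$-dependent set strictly larger than the set of bounded frequencies treated in \cite{BGY}.

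Concerning your step (b): it is salvageable, but not as written, and in the paper's approach it is unnecessary. The domination $|e^{(\lambda_n-\lambda_0)h}|\leq e^{|\PR(\lambda_n-\lambda_0)|\,|h|}$ is useless because $|h|$ is unbounded on $\D$; instead, approach radially, $\lambda_n=t_n\lambda_0$ with $t_n\uparrow 1$, so that, as in Lemma \ref{lema conexion}, $|e^{t_n\lambda_0 h}|^p\leq\max\{1,|e^{\lambda_0 h}|^p\}$, and since $h$ is univalent it has finite non-tangential limits a.e., whence dominated convergence on $\T$ gives $e^{t_n\lambda_0 h}\to e^{\lambda_0 h}$ in $H^p$. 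The paper never needs any such boundary approximation, because the frequencies furnished by \cite{BGY} already lie in the open strip $\inte(\sigma_p(\Delta\mid_{H^p}))$; the ``small modification'' alluded to before the proposition is this observation together with the Mazur-type upgrade, not a limiting argument along the boundary lines.
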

\begin{proof}
 It is equivalent to show that $\Span\{e^{\lambda z} : \lambda \in \inte(\sigma_p(\Delta\mid_{H^p})) \}$ is dense on the Hardy space $H^p(\Omega)$ associated to the domain $\Omega$, that is, the Banach space of all the holomorphic functions $f:\Omega \rightarrow \C$ such that there exists a harmonic function $u: \Omega \rightarrow \C$ such that $|f(z)|^p\leq u(z)$ for every $z \in \Omega$ (see \cite{Zhu} for more on $H^p$ spaces of general domains). 

Now, by the convexity at infinity of $\Omega$ it is clear that $$\{ e^{\lambda z} \in H^\infty(\Omega) : \lambda \in \C \} \subseteq \{e^{\lambda z} \in H^p(\Omega) : \lambda \in \sigma_p(\Delta\mid_{H^p}) \}.$$ As a consequence of our hypotheses on $\Omega$ and \cite[Theorem 1.1]{BGY} we deduce that $\Span\{ e^{\lambda z} \in H^\infty(\Omega) : \lambda \in \C \}$ is weak-star dense in $H^\infty(\Omega)$, so it is weak dense on $H^p(\Omega)$ for $1<p<\infty.$ Thus, Mazur's Lemma (see \cite[Corollary 3, Chapter 2]{Diestel}) ensures the density in $H^p(\Omega)$ for $1<p< \infty$, and the density in $H^1(\Omega)$ follows from the continuity of the injection $H^p(\Omega)\hookrightarrow H^1(\Omega).$
\end{proof}
It will also be important to handle a slightly stronger property concerning the density of eigenfunctions of operators. An operator $T\in\EL(\X)$ has \textit{rich point spectrum} if $\inte(\sigma_p(T))\neq \emptyset$ and, for every open disc $D\subset \sigma_p(T)$, the linear manifold $$\Span \bigcup\limits_{z \in D} \ker(T-zI)$$ is dense in $\mathcal{X}.$ This concept was first introduced in \cite{LLPZ} in order to study the extended eigenvalues of Cesàro operators. 

Before stating the aforementioned result, let us recall that $(H^p)^*$ can be identified with $H^q$ for $1<p<\infty$, where $\frac{1}{p}+\frac{1}{q} =1$, while $(H^1)^*$ may be identified with $BMOA$ \cite{Zhu}. In particular, for each functional $\delta \in (H^p)^*$ there exists a unique $g \in H^q$ (or $BMOA$, respectively) such that $$\delta(f) = \int_{0}^{2\pi} f(e^{i\theta})\overline{g(e^{i\theta})}d\theta \qquad (f\in H^p).$$

\begin{lemma}\label{lema rich point spectrum}
	Let $\Fi : \D \rightarrow\D$ be a holomorphic function such  that there exists a hyperbolic holomorphic semiflow $(\Fi_t)_{t\geq 0}$ with $\Fi = \Fi_1.$ Let $\Omega$ be its Koenigs domain and assume that $\inte(\overline{\Omega}) = \Omega$ and that $\C\setminus \overline{\Omega}$ has at most two connected components. Then, $C_\Fi$ has rich point spectrum.
\end{lemma}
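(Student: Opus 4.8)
The plan is to derive \textbf{rich point spectrum} for $C_\Fi$ from Proposition \ref{proposicion densidad} together with the spectral mapping theorem \eqref{spectral mapping theorem point spectrum}. First I would identify $\sigma_p(C_\Fi\mid_{H^p})$. Since $\Fi = \Fi_1$ and the infinitesimal generator $\Delta$ of $(C_{\Fi_t})_{t\geq 0}$ satisfies \eqref{spectral mapping theorem point spectrum} with $t=1$, we have $\sigma_p(C_\Fi\mid_{H^p}) = \{e^\lambda : \lambda\in\sigma_p(\Delta\mid_{H^p})\}$. By Theorem \ref{teorema betsakos}, $\sigma_p(\Delta\mid_{H^p})$ is, up to its boundary, a left half-plane $\{\PR(\lambda) < \pi/(p\gamma)\}$ (or a vertical strip in the case $\Omega$ contains a horizontal strip); in either case $\inte(\sigma_p(\Delta\mid_{H^p}))$ is a non-empty open set whose exponential image is open and non-empty, so $\inte(\sigma_p(C_\Fi\mid_{H^p}))\neq\emptyset$. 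Moreover, since $\exp$ is an open map, $\exp(\inte(\sigma_p(\Delta\mid_{H^p})))\subseteq \inte(\sigma_p(C_\Fi\mid_{H^p}))$, and conversely any point of $\inte(\sigma_p(C_\Fi))$ has a preimage in $\inte(\sigma_p(\Delta))$ (pick any logarithm; a neighborhood pulls back into $\sigma_p(\Delta)$, hence into its interior). Recall also that $\ker(C_\Fi - e^\lambda I)\supseteq \ker(\Delta-\lambda I) = \C\, e^{\lambda h}$.

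Next, fix an open disc $D\subset \sigma_p(C_\Fi\mid_{H^p})$; I want to show $\Span\bigcup_{\mu\in D}\ker(C_\Fi-\mu I)$ is dense in $H^p$. It suffices to produce a set $\Lambda\subseteq \inte(\sigma_p(\Delta\mid_{H^p}))$ such that $e^\Lambda\subseteq D$ and $\Span\{e^{\lambda h}:\lambda\in\Lambda\}$ is dense in $H^p$; then density of the smaller span forces density of $\Span\bigcup_{\mu\in D}\ker(C_\Fi-\mu I)$, which contains it. Choose $\mu_0\in D$ and a logarithm $\lambda_0$ with $e^{\lambda_0}=\mu_0$; since $\exp$ is a local homeomorphism, there is an open disc $D_0$ around $\lambda_0$ with $D_0\subseteq\inte(\sigma_p(\Delta\mid_{H^p}))$ and $e^{D_0}\subseteq D$. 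Set $\Lambda = D_0$. The remaining point is to show $\Span\{e^{\lambda h}:\lambda\in D_0\}$ is dense in $H^p$, and this is where a duality/analyticity argument is needed: if $\delta\in(H^p)^*$ annihilates all $e^{\lambda h}$ for $\lambda\in D_0$, consider $F(\lambda) := \delta(e^{\lambda h})$ for $\lambda\in\inte(\sigma_p(\Delta\mid_{H^p}))$. One checks $F$ is holomorphic there (the map $\lambda\mapsto e^{\lambda h}$ is holomorphic into $H^p$ on the interior of the point spectrum, by the usual argument differentiating under the integral / using local boundedness of $\|e^{\lambda h}\|_{H^p}$, which follows from Theorem \ref{teorema betsakos} giving an open set of admissible $\lambda$). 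Since $F$ vanishes on the disc $D_0$ and $\inte(\sigma_p(\Delta\mid_{H^p}))$ is connected (a half-plane or a strip), $F\equiv 0$ on all of $\inte(\sigma_p(\Delta\mid_{H^p}))$, so $\delta$ annihilates $\Span\{e^{\lambda h}:\lambda\in\inte(\sigma_p(\Delta\mid_{H^p}))\}$; by Proposition \ref{proposicion densidad} this span is dense, hence $\delta=0$. By Hahn--Banach, $\Span\{e^{\lambda h}:\lambda\in D_0\}$ is dense in $H^p$, completing the proof.

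The main obstacle — and the step deserving the most care — is the holomorphy of $\lambda\mapsto e^{\lambda h}\in H^p$ on $\inte(\sigma_p(\Delta\mid_{H^p}))$, together with the local uniform bound on $\|e^{\lambda h}\|_{H^p}$ that makes the identity theorem for the vector-valued (equivalently scalar, via $\delta$) function $F$ legitimate. Both follow from the fact that Theorem \ref{teorema betsakos} exhibits $\inte(\sigma_p(\Delta\mid_{H^p}))$ as genuinely open: for $\lambda$ in a small closed disc inside it, $\PR(\lambda)$ stays bounded away from the critical abscissa $\pi/(p\gamma)$, so $\sup_\lambda\|e^{\lambda h}\|_{H^p}<\infty$ on that disc (uniform domination by the Hardy-space norm at the worst real part), and difference quotients of $e^{\lambda h}$ converge in $H^p$-norm by dominated convergence on the boundary values; this yields both holomorphy of $\lambda\mapsto e^{\lambda h}$ and the applicability of the scalar identity theorem to $F=\delta\circ(\lambda\mapsto e^{\lambda h})$. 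Everything else is bookkeeping with the exponential map and Proposition \ref{proposicion densidad}.
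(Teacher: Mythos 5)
Your proposal is correct and follows essentially the same route as the paper: reduce to the density of $\Span\{e^{\lambda h}\}$ over an open set of logarithms of points of $D$, use holomorphy of $\lambda \mapsto e^{\lambda h}$ together with the Identity Theorem on the connected open set $\inte(\sigma_p(\Delta\mid_{H^p}))$, and conclude via Proposition \ref{proposicion densidad} and a Hahn--Banach duality argument. The differences are cosmetic: the paper obtains weak holomorphy by testing against $g\in H^q$ (or $BMOA$) and Morera's Theorem and works directly with $\{\lambda : e^\lambda\in D\}$ instead of a chosen logarithm disc, while your norm-holomorphy estimates and your preimage claim are fine since, by Theorem \ref{teorema betsakos}, $\inte(\sigma_p(\Delta\mid_{H^p}))$ is an open vertical half-plane or strip that contains every logarithm of a point of $\sigma_p(C_\Fi)$ lying in an open subset of the spectrum.
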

\begin{proof}
	By Theorem \ref{teorema betsakos} and \eqref{spectral mapping theorem point spectrum} it is clear that $\inte(\sigma_p(C_\Fi))\neq \emptyset.$ Let $D\subset \sigma_p(C_\Fi)$ be an open disc. We are showing that $$\Span \{ e^{\lambda h} : e^\lambda \in D  \}$$ is dense in $H^p$, which will yield the result. For such a task, observe that the map $$\lambda \in \inte(\sigma_p(\Delta\mid_{H^p})) \mapsto e^{\lambda h} \in H^p$$ is a vector-valued holomorphic function, since for every $g \in H^q$ with $\frac{1}{p}+\frac{1}{q}= 1$ if $p>1$ of $g\in BMOA$ if $p=1$ the map $$\lambda \in \inte(\sigma_p(\Delta\mid_{H^p})) \mapsto \pe{e^{\lambda h},g} = \int_0^{2\pi} e^{\lambda h(e^{i\theta})}\overline{g(e^{i\theta})}d\theta$$ is holomorphic by means of a standard application of Morera's Theorem. Now, assume that $1<p<\infty$ and let $g\in H^q$ (again, the case $p=1$ is equivalent considering $g\in BMOA$) satisfying $\pe{e^{\lambda h},g} = 0$ for every $e^\lambda \in D.$ By the Identity Theorem, it follows then that $\pe{e^{\lambda h},g} = 0$ for every $\lambda \in \inte(\sigma_p(\Delta\mid_{H^p})),$ and Proposition \ref{proposicion densidad} yields that $g=0.$ This shows the density of $\Span \{ e^{\lambda h} : e^\lambda \in D  \}$, and the proof is done.
\end{proof}

\subsection{The functional (bi)commutant property}
Now, we can state the main result for hyperbolic semiflows. The proof includes various ideas from \cite[Section 3]{LLPR2}:
\begin{theorem}\label{teorema hiperbolico}
	Let $\Fi : \D \rightarrow\D$ be a holomorphic function such  that there exists a hyperbolic  semiflow $(\Fi_t)_{t\geq 0}$ with $\Fi = \Fi_1.$ Let $\Omega$ be its Koenigs domain  and assume that $\inte(\overline{\Omega}) = \Omega$ and that $\C\setminus \overline{\Omega}$ has at most two connected components.  Then, $C_\Fi \in \EL(H^p)$ has the functional bicommutant property for every $1\leq p < \infty.$ In particular, there exist no non-trivial idempotents lying in $\biconm{C_\Fi}.$
\end{theorem}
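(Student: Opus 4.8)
The plan is to realize $\biconm{C_\Fi}$ concretely as an algebra of functions on a connected subset of $\C$, using the rich point spectrum of $C_\Fi$ established in Lemma \ref{lema rich point spectrum}. The starting point is the observation that the eigenfunctions $e^{\lambda h}$ of the infinitesimal generator $\Delta$ are also eigenfunctions of $C_\Fi = C_{\Fi_1}$: indeed $C_\Fi e^{\lambda h} = e^{\lambda h\circ\Fi_1} = e^{\lambda(h+1)} = e^\lambda e^{\lambda h}$, so $e^\lambda\in\sigma_p(C_\Fi)$ with one-dimensional eigenspace $\C\, e^{\lambda h}$. First I would fix the connected set: let $G = \sigma_p(C_\Fi)$, which by \eqref{spectral mapping theorem point spectrum} and Theorem \ref{teorema betsakos} is (essentially) a vertical half-strip or half-plane image under $\exp$, hence connected and with nonempty interior $D_0 := \inte(\sigma_p(C_\Fi))$. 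For any operator $A$ commuting with $C_\Fi$ and any $\mu\in G$, $A$ maps $\ker(C_\Fi-\mu I)$ into itself, so since that kernel is one-dimensional there is a scalar $\widehat{A}(\mu)$ with $A\,e^{\lambda h} = \widehat{A}(\mu)\,e^{\lambda h}$ where $\mu=e^\lambda$. This already defines a map $\Phi^{-1}\colon A\mapsto \widehat A$ into functions on $G$; the content is to show (a) $\widehat A$ is continuous on $G$, (b) $\Phi^{-1}$ is injective on $\biconm{C_\Fi}$, (c) $\Phi^{-1}$ is an algebra homomorphism, and (d) identify the image as a subalgebra $\A\subseteq\mathcal C(G)$ so that $\Phi:=(\Phi^{-1})^{-1}$ is the required isomorphism.

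For (a), continuity (indeed holomorphy on $D_0$) of $\mu\mapsto\widehat A(\mu)$ follows from the vector-valued holomorphy of $\lambda\mapsto e^{\lambda h}$ noted in the proof of Lemma \ref{lema rich point spectrum}: pairing $A\,e^{\lambda h}=\widehat A(e^\lambda)\,e^{\lambda h}$ against a functional $g\in H^q$ (or $BMOA$) that is nonzero on $e^{\lambda h}$ exhibits $\widehat A\circ\exp$ locally as a ratio of holomorphic functions, and continuity up to the boundary of $\sigma_p$ requires a separate (standard) argument using the local boundedness of $\widehat A$ by $\|A\|$ on eigenvector-spanned subspaces. For (b), injectivity is exactly where the rich point spectrum is used: if $\widehat A\equiv 0$ on an open disc $D\subset G$ then $A$ annihilates $\Span\{e^{\lambda h}: e^\lambda\in D\}$, which by Lemma \ref{lema rich point spectrum} is dense in $H^p$, so $A=0$; hence $\widehat A\equiv\widehat B$ on $D$ forces $A=B$, and a fortiori $\widehat A=\widehat B$ on all of $G$ implies $A=B$. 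Part (c) is routine: $\widehat{AB}(\mu)=\widehat A(\mu)\widehat B(\mu)$ and $\widehat{A+B}=\widehat A+\widehat B$ read off from the action on a single eigenvector, and $\widehat{I}\equiv 1$.

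The main obstacle is (d), surjectivity onto a well-defined subalgebra of $\mathcal C(G)$ — i.e., one must not merely bound $\biconm{C_\Fi}$ by $\mathcal C(G)$ but produce the algebra $\A$ as the \emph{exact} image and check $\Phi$ is onto it. The efficient route is to define $\A := \{\widehat A : A\in\biconm{C_\Fi}\}$, which is automatically a subalgebra of $\mathcal C(G)$ by (a) and (c), and then $\Phi^{-1}$ is a bijection onto $\A$ by construction and injectivity (b); the work is entirely in verifying $\A\subseteq\mathcal C(G)$, already handled in (a). Thus the theorem reduces to items (a) and (b), with (a)'s boundary-continuity claim the genuinely delicate point; here I would follow the argument of \cite[Section 3]{LLPR2}, adapting their hyperbolic-linear-fractional computation to the general Koenigs-domain setting via Theorem \ref{teorema betsakos}. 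Once the functional bicommutant property is in hand, the nonexistence of non-trivial idempotents in $\biconm{C_\Fi}$ is immediate from Theorem \ref{main result}.
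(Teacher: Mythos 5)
There is a genuine gap at the very first step, and it is fatal to the route you chose. You claim that for $\mu=e^{\lambda}\in\sigma_p(C_\Fi)$ the eigenspace $\ker(C_\Fi-\mu I)$ is one-dimensional, spanned by $e^{\lambda h}$, and you then only use that $A$ commutes with the single operator $C_\Fi$. In the hyperbolic setting this is false: by Theorem \ref{teorema betsakos} the point spectrum of $\Delta$ is (up to boundary) a vertical half-plane or strip, hence invariant under translation by $2\pi i$, so $e^{\lambda h}, e^{(\lambda\pm 2\pi i)h}, e^{(\lambda\pm 4\pi i)h},\dots$ are linearly independent eigenvectors of $C_{\Fi_1}$ all with the same eigenvalue $e^{\lambda}$; the eigenspaces of $C_\Fi$ are infinite-dimensional. (The one-dimensionality statement of Lemma \ref{lema dimension kernel} requires $\sumn 1-|\Fi_n(0)|=\infty$, which fails for hyperbolic orbits.) Consequently an operator in $\conm{C_\Fi}$ need not act as a scalar on $e^{\lambda h}$ at all: $M_{e^{2\pi i h}}$ commutes with $C_\Fi$ but sends $e^{\lambda h}$ to $e^{(\lambda+2\pi i)h}$. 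In fact, if your argument were correct it would establish the functional \emph{commutant} property for $C_\Fi$, contradicting Theorem \ref{teorema conmutante no abeliano hiperbolico}, which shows $\conm{C_\Fi}$ is not even abelian; so the one-dimensionality claim cannot be repaired within the commutant.

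The paper's proof avoids this precisely by exploiting the bicommutant together with the semigroup: since each $C_{\Fi_t}$ lies in $\conm{C_\Fi}$, any $X\in\biconm{C_\Fi}$ commutes with all $C_{\Fi_t}$, whence \eqref{igualdad biconmutante} shows $Xe^{\lambda h}$ is a \emph{common} eigenvector of the whole semigroup with eigenvalues $e^{\lambda t}$; the joint eigenspaces (equivalently, the eigenspaces $\ker(\Delta-\lambda I)$) are one-dimensional and spanned by $e^{\lambda h}$, which is what yields the scalar $\nu_X(\lambda)$. Two further points in your write-up then become moot or need care: (i) well-definedness of $\widehat A$ as a function of $\mu=e^{\lambda}$ (independence of the choice of logarithm) is not automatic — the paper sidesteps it by defining $\nu_X$ on $G=\inte(\sigma_p(\Delta\mid_{H^p}))$ in the $\lambda$-plane, and only proves $2\pi i$-periodicity later, for bicommutant elements, in Theorem \ref{teorema doble conmutante hiperbólico}; and (ii) your ``delicate'' boundary-continuity step is unnecessary, since taking the open connected set $G=\inte(\sigma_p(\Delta\mid_{H^p}))$ suffices — injectivity only needs Proposition \ref{proposicion densidad}, which uses interior eigenvalues. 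Your parts (b), (c) and the formal definition of $\A=\ran(\Psi)$ do match the paper, but they rest on the eigenspace claim that fails.
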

\begin{proof}
	Our aim is to construct a connected open subset $G\subset \C$, a subalgebra $\A \subset \mathcal{C}(G)$ and an algebra isomorphism $\Phi : \A \rightarrow \biconm{C_\Fi}.$ Let $G = \inte(\sigma(\Delta\mid_{H^p})),$ which by Theorem \ref{teorema betsakos} may be a vertical half-plane or a vertical strip. Observe that both sets are connected open sets. We will work with the two situations indifferently, since the proof works the same for both cases.

	 Let $X \in \biconm{C_\Fi}$ and consider $\lambda \in G.$ Observe that $e^{\lambda h} \in H^p$, and in particular $C_{\Fi_t}e^{\lambda h} = e^{\lambda t}e^{\lambda h}$ for every $t> 0$ by \eqref{spectral mapping theorem point spectrum}. Now, 
	 \begin{equation}\label{igualdad biconmutante}
Xe^{\lambda h} = e^{-\lambda t}XC_{\Fi_t}e^{\lambda h} = e^{-\lambda t} C_{\Fi_t} (Xe^{\lambda h}).
	 \end{equation}
	  That is, $Xe^{\lambda h}$ is a common eigenvector for the semigroup $(C_{\Fi_t})_{t\geq 0}$, so there exists a map $\nu_X : G \rightarrow \C$ such that $Xe^{\lambda h} = \nu_X(\lambda)e^{\lambda h}.$ We claim that $\nu_X$ is, indeed, a holomorphic function. For such a task, recall that the map $\lambda \in G \mapsto e^{\lambda h} \in H^p$ is a vector-valued holomorphic function, as it was shown in the proof of Lemma \ref{lema rich point spectrum}. Now, consider $\mathbbm{1} \in H^p$ to be the constant function $\mathbbm{1}(z) \equiv 1$. Recall that $\pe{f,\mathbbm{1}}=f(0)$ for every $f \in H^p.$ Thus,  $$\pe{Xe^{\lambda h},\mathbbm{1}} = \nu_X(\lambda)\pe{e^{\lambda h},\mathbbm{1}} = \nu_X(\lambda)e^{\lambda h(0)},$$ so $\nu_X(\lambda) = e^{-\lambda h(0)}\pe{Xe^{\lambda h},\mathbbm{1}}$ for every $\lambda \in G$. Hence, $\nu_X$ is a product of holomorphic functions and is holomorphic as well, as claimed. 
		
		Now, observe that $\nu_X(\lambda) \in \sigma_p(X)$ for every $\lambda \in G$, so $|\nu_X(\lambda) |\leq \norm{X}$ for every $\lambda \in G$. That is, $\nu_X \in H^\infty(G).$ Then, we can define the map $\Psi : X \in \biconm{C_\Fi} \mapsto \nu_X \in H^\infty(G),$ noticing that 
		\begin{equation}\label{igualdad clave}
Xe^{\lambda h}= \Psi(X)(\lambda) e^{\lambda h} \qquad (\lambda \in G).
		\end{equation}
		 Previous equality \eqref{igualdad clave} guarantees the uniqueness of the definition, so $\Psi$ is well defined. Let us prove that $\Psi$ is an injective algebra homomorphism. The linearity follows immediately as a consequence of \eqref{igualdad clave}. To see that $\Psi$ is multiplicative, let $X,Y \in \biconm{C_\Fi}$ and observe that for every $\lambda \in G$
		 $$\Psi(XY)(\lambda)e^{\lambda h} = XYe^{\lambda h} = X (\Psi(Y)(\lambda) e^{\lambda h}) = \Psi(X)(\lambda)\Psi(Y)(\lambda)e^{\lambda h},$$ so $\Psi(XY) = \Psi(X)\Psi(Y)$, as we wanted. To prove the injectivity of $\Psi$, if $\Psi(X) = 0$, then $Xe^{\lambda h} = 0$ for every $\lambda \in G$ by \eqref{igualdad clave}. At this point, it is enough to apply Proposition \ref{proposicion densidad} to deduce that $X=0$, so $\Psi$ is injective.
		 
		 Thus, $\Psi$ is an injective algebra homomorphism and  $\Psi : \biconm{C_\Fi} \rightarrow \ran(\Psi)$ is an algebra isomorphism. Thus, we may consider $\Phi  := \Psi^{-1}:\ran(\Psi)\rightarrow \biconm{C_\Fi}$, which is again an algebra isomorphism. By construction, it is an immediate fact that $\ran(\Psi)$ is a subalgebra of $\mathcal{C}(G)$, since it contains $H^\infty(G)$ as a subalgebra. As a consequence, we have shown that $C_{\Fi}$ has the functional bicommmutant property, and by Theorem \ref{main result} it has no non-trivial idempotents lying in $\biconm{C_\Fi}.$
\end{proof}
A careful reading of the proof shows that the fact $X\in \biconm{C_\Fi}$ has been only used to derive that $X$ commutes with $C_{\Fi_t}$ for all $t>0.$ Hence, it is straightforward to deduce the following consequence:

\begin{corollary}\label{corolario conmutante semigrupo hiperbolico}
	Let $(\Fi_t)_{t\geq0}$ be a hyperbolic flow such that its Koenigs domain $\Omega$ verifies that $\inte(\overline{\Omega}) = \Omega$ and that $\C\setminus \overline{\Omega}$ has at most two connected components, and denote $\mathcal{T}= (C_{\Fi_t})_{t\geq 0}$ it associated $C_0-$semigroup acting on $H^p$ for $1\leq p <\infty$. Then, $\mathcal{T}'$ is abelian and does not contain any non-trivial idempotent.
\end{corollary}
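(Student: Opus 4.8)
The plan is to note that $\mathcal{T}'=\bigcap_{t>0}\conm{C_{\Fi_t}}$, so that an operator $X$ belongs to $\mathcal{T}'$ exactly when $XC_{\Fi_t}=C_{\Fi_t}X$ for every $t>0$. As the remark preceding the statement records, this is the only feature of an element of $\biconm{C_\Fi}$ that entered the proof of Theorem \ref{teorema hiperbolico}: it is precisely what makes $Xe^{\lambda h}$, for $\lambda$ in the connected open set $G=\inte(\sigma_p(\Delta\mid_{H^p}))$, a common eigenvector of $(C_{\Fi_t})_{t\geq0}$ with eigenvalue $e^{\lambda t}$ for each $t>0$, just as $e^{\lambda h}$ (via \eqref{spectral mapping theorem point spectrum}), hence of the form $Xe^{\lambda h}=\nu_X(\lambda)e^{\lambda h}$ for a scalar-valued map $\nu_X$ on $G$.

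So I would rerun the construction of that proof with $\biconm{C_\Fi}$ replaced by $\mathcal{T}'$, obtaining a map $\Psi:\mathcal{T}'\to H^\infty(G)$ determined by $Xe^{\lambda h}=\Psi(X)(\lambda)e^{\lambda h}$. The identity $\nu_X(\lambda)=e^{-\lambda h(0)}\pe{Xe^{\lambda h},\mathbbm 1}$ together with the vector-valued holomorphy of $\lambda\mapsto e^{\lambda h}\in H^p$ on $G$ (established in the proof of Lemma \ref{lema rich point spectrum}) shows that $\Psi(X)$ is holomorphic, and $\Psi(X)(\lambda)\in\sigma_p(X)$ shows that $|\Psi(X)(\lambda)|\le\norm X$, so $\Psi$ indeed lands in $H^\infty(G)$. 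Linearity and multiplicativity of $\Psi$ follow from its defining relation exactly as in Theorem \ref{teorema hiperbolico}, and $\Psi$ is injective because $\Psi(X)=0$ forces $Xe^{\lambda h}=0$ for all $\lambda\in G$, hence $X=0$ by Proposition \ref{proposicion densidad}. Thus $\Psi$ is an algebra isomorphism of $\mathcal{T}'$ onto the subalgebra $\ran(\Psi)$ of $\mathcal{C}(G)$, with $G$ connected.

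The two conclusions are then immediate. Since $\ran(\Psi)\subseteq\mathcal{C}(G)$ is commutative and $\Psi$ is an algebra isomorphism, $\mathcal{T}'$ is abelian. And if $J\in\mathcal{T}'$ satisfies $J^2=J$, then $\Psi(J)$ is a continuous idempotent on the connected set $G$, so $\Psi(J)\equiv0$ or $\Psi(J)\equiv1$; as $\Psi(0)=0$, $\Psi(Id_{H^p})=1$, and $\Psi$ is injective, this yields $J=0$ or $J=Id_{H^p}$, so $\mathcal{T}'$ contains no non-trivial idempotent. The only point requiring a little care is that $\mathcal{T}'$ is literally neither a commutant nor a bicommutant, so Definition \ref{definicion functional commutant} and Theorem \ref{main result} do not apply to it verbatim; one simply reproduces the short argument of Theorem \ref{main result} in this slightly more general algebraic setting. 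Beyond that there is no real obstacle, since the substantive work has already been carried out in Theorem \ref{teorema hiperbolico}.
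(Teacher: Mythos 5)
Your proposal is correct and follows exactly the route the paper intends: the paper's proof of the corollary is precisely the observation that the argument of Theorem \ref{teorema hiperbolico} only used that $X$ commutes with every $C_{\Fi_t}$, $t>0$, which is the defining property of elements of $\mathcal{T}'$, so the same construction of $\Psi$ applies and yields both conclusions. Your added remark that Theorem \ref{main result} must be rerun rather than quoted verbatim (since $\mathcal{T}'$ is neither a commutant nor a bicommutant) is a fair point of care, but it does not change the substance.
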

At this point, it is natural to ask whether every composition operator satisfying the hypotheses of Theorem \ref{teorema hiperbolico} has not only the functional bicommutant property, but also the functional commutant property. In this sense, the following result holds:
\begin{theorem}\label{teorema conmutante no abeliano hiperbolico}
	Let $\Fi : \D \rightarrow\D$ be a holomorphic function such  that there exists a hyperbolic  semiflow $(\Fi_t)_{t\geq 0}$ such that $\Fi = \Fi_1.$ Then, $\conm{C_\Fi}$ is not abelian. In particular, $C_\Fi$ does not have either the functional commutant property or minimal commutant.
\end{theorem}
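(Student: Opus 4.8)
The plan is to produce two operators in $\conm{C_\Fi}$ that fail to commute with one another: one will be a composition operator coming directly from the flow, and the other a multiplication operator built from the Koenigs function $h$ of $(\Fi_t)_{t\geq 0}$.

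First I would observe that, since the semiflow is hyperbolic, its Koenigs domain $\Omega = h(\D)$ is contained in a horizontal strip \cite{CD}; hence $\Impart h$ is bounded on $\D$, so the function $g := e^{2\pi i h}$ belongs to $H^\infty(\D)$ and the multiplication operator $M_g$ is bounded on $H^p$ for every $1 \leq p < \infty$. Next I would use the defining identity $h\circ\Fi_t = h + t$ (from \eqref{koenigs function}) to compute, for every $f \in H^p$ and every $t \geq 0$,
$$ C_{\Fi_t}(M_g f) = (gf)\circ\Fi_t = e^{2\pi i (h\circ\Fi_t)}(f\circ\Fi_t) = e^{2\pi i t}\, g\, (f\circ\Fi_t) = e^{2\pi i t}\, M_g C_{\Fi_t} f, $$
that is, the twisted intertwining relation $C_{\Fi_t} M_g = e^{2\pi i t}\, M_g C_{\Fi_t}$. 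Specializing to $t=1$ gives $C_\Fi M_g = M_g C_\Fi$, so $M_g \in \conm{C_\Fi}$; and since $C_{\Fi_t}C_{\Fi_1} = C_{\Fi_{t+1}} = C_{\Fi_1}C_{\Fi_t}$, every $C_{\Fi_t}$ also lies in $\conm{C_\Fi}$.

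To finish, I would specialize the twisted relation to $t = 1/2$, which yields $C_{\Fi_{1/2}} M_g = -\,M_g C_{\Fi_{1/2}}$. If these two members of $\conm{C_\Fi}$ commuted, this would force $M_g C_{\Fi_{1/2}} = 0$; but evaluating on the constant function shows $M_g C_{\Fi_{1/2}}\mathbbm{1} = M_g\mathbbm{1} = g \neq 0$, a contradiction. Hence $\conm{C_\Fi}$ is not abelian, and Lemma \ref{lema conmutante abeliano} then rules out both minimal commutant and the functional commutant property, since either would force $\conm{C_\Fi}$ to be abelian. The computation is short, and I do not expect a genuine obstacle; the one point to get right is that hyperbolicity (equivalently, boundedness of $\Omega$ in the imaginary direction) is exactly what makes $e^{2\pi i h}$ a bounded symbol, so that $M_g$ is an honest bounded operator, and that the twisted relation must be invoked with a $t$ for which $e^{2\pi i t}\neq 1$ — any $t\notin\Z$ will do.
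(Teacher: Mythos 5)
Your argument is correct and is essentially the paper's own proof: the paper also uses the bounded symbol $e^{2\pi i h}$ (via the relation $C_{\Fi_t}M_{e^{ixh}} = e^{itx} M_{e^{ixh}}C_{\Fi_t}$, i.e.\ its \eqref{eigenoperator}), notes $M_{e^{2\pi i h}}\in\conm{C_{\Fi_1}}$ while some $C_{\Fi_{t_0}}$ fails to commute with it, and concludes with Lemma \ref{lema conmutante abeliano}. The only cosmetic differences are that you derive the twisted relation directly from $h\circ\Fi_t = h+t$ rather than citing the point-spectrum results, and you make the choice $t_0=1/2$ and the final contradiction explicit.
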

\begin{proof}
	Let $\Omega$ and $h$ be the Koenigs domain and the Koenigs function associated to $(\Fi_t)_{t\geq 0}$, respectively. Since it is a hyperbolic semiflow, $\Omega$ is contained in a horizontal strip. Then, the functions $e^{ixh}$ belong to $H^\infty$ (they are analytic and bounded on $\D$) for every $x \in \R.$ Moreover, by Theorem \ref{teorema betsakos} and \eqref{spectral mapping theorem point spectrum} it follows that $C_{\Fi_t}e^{ixh} = e^{itx} e^{ixh}$ for every $x,t \in \R.$ Now, for each $x\in \R$, consider the multiplication operator $M_{e^{ix h}},$ which is bounded on $H^p$. Then, for every $f \in H^p$ and $t,x \in \R$
	$$C_{\Fi_t}M_{e^{ixh}}f = C_{\Fi_t}(e^{ ixh})\cdot C_{\Fi_t}f = e^{itx} e^{ixh} C_{\Fi_t}f = e^{itx} M_{e^{ ix h}}C_{\Fi_t}f.$$
	That is, 
	\begin{equation}\label{eigenoperator}
		C_{\Fi_t}M_{e^{ixh}} = e^{itx} M_{e^{ ix h}}C_{\Fi_t}f
	\end{equation}
	At this point, it is easy to consider $x_0 = 2\pi$ to obtain a multiplication operator $M_{e^{i x_0 h}}$ lying in the commutant of $C_{\Fi_1}$ and $t_0 \in \R$ that $C_{\Fi_{t_0}}$ does not commute with $M_{e^{i x_0 h}}$ by means of \eqref{eigenoperator}. This shows that $\conm{C_\Fi}$ is not abelian, and Lemma \ref{lema conmutante abeliano} completes the proof.
\end{proof}
\subsection{The double commutant property}
We can exploit the construction of the proof of Theorem  \ref{teorema hiperbolico} to deduce the double commutant property for a subclass of the composition operators considered:

\begin{theorem}\label{teorema doble conmutante hiperbólico}
	Let $\Fi : \D \rightarrow\D$ be a holomorphic function such  that there exists a hyperbolic holomorphic semiflow $(\Fi_t)_{t\geq 0}$ such that $\Fi = \Fi_1.$ Let $\Omega$ be its Koenigs domain  and assume that $\inte(\overline{\Omega}) = \Omega$ and that $\C\setminus \overline{\Omega}$ has at most two connected components.  With the notation of Theorem \ref{teorema betsakos}, assume further that $\Omega$ does not contain any horizontal strip and that there exists $C>0$ such that 
	\begin{equation}\label{hipotesis norma polinomios}
\norm{p(C_\Fi)} \leq C \cdot \sup\{ |p(z)| : |z| \leq e^{\frac{\pi}{p\gamma}} \}
	\end{equation} for every polynomial $p \in \C[z]$. Then, $C_\Fi \in \EL(H^p)$ has the double commutant property for every $1<p<\infty.$
\end{theorem}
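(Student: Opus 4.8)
The plan is to show that $\biconm{C_\Fi} \subseteq \overline{\alg(C_\Fi)}$, since the reverse inclusion is automatic; combined with Theorem \ref{teorema hiperbolico}, which already identifies $\biconm{C_\Fi}$ with a function algebra via the isomorphism $\Psi$, this will give the double commutant property. The key object is the map $\Psi : \biconm{C_\Fi} \to H^\infty(G)$ built in the proof of Theorem \ref{teorema hiperbolico}, where $G = \inte(\sigma(\Delta\mid_{H^p}))$, which under the present hypothesis ($\Omega$ contains no horizontal strip) is the vertical half-plane $\{\lambda : \PR(\lambda) < \frac{\pi}{p\gamma}\}$, characterized by the property $Xe^{\lambda h} = \Psi(X)(\lambda)e^{\lambda h}$ for all $\lambda \in G$. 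The strategy is: given $X \in \biconm{C_\Fi}$, produce a sequence of polynomials $p_n$ such that $p_n(C_\Fi) \to X$ in the weak operator topology, by arranging that $p_n$ approximates the bounded holomorphic symbol $\Psi(X)$ appropriately on the spectrum.

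First I would make precise the relation between polynomials in $C_\Fi$ and their symbols under $\Psi$. Since $C_{\Fi_1}e^{\lambda h} = e^{\lambda}e^{\lambda h}$ by \eqref{spectral mapping theorem point spectrum}, for any polynomial $p$ we have $p(C_\Fi)e^{\lambda h} = p(e^\lambda)e^{\lambda h}$, so $\Psi(p(C_\Fi))(\lambda) = p(e^\lambda)$; that is, $p(C_\Fi)$ corresponds to the function $\lambda \mapsto p(e^\lambda)$ on $G$. Writing $w = e^\lambda$, as $\lambda$ ranges over the half-plane $\{\PR(\lambda) < \frac{\pi}{p\gamma}\}$ the variable $w$ ranges over the punctured disc $\{0 < |w| < R\}$ with $R = e^{\pi/(p\gamma)}$. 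The function $\nu_X := \Psi(X)$ is bounded and holomorphic on $G$ and, being constant on vertical lines shifted by $2\pi i$ periodicity? — here I must be careful: $\nu_X$ need not be $2\pi i$-periodic, so it does not descend to the $w$-disc directly. The fix is to note that $X$ commutes with $C_{\Fi_1}$ only, not the whole semiflow in this part of the argument — but wait, $X \in \biconm{C_\Fi}$ forces $X$ to commute with all $C_{\Fi_t}$ (as observed after Theorem \ref{teorema hiperbolico}), hence $Xe^{\lambda h} = \nu_X(\lambda)e^{\lambda h}$ with the \emph{same} $\nu_X$ for the full continuum, and this does \emph{not} impose periodicity either. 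So instead I would work directly on $G$: the hypothesis \eqref{hipotesis norma polinomios} says $\norm{p(C_\Fi)} \le C\sup\{|p(w)| : |w| \le R\}$, which is exactly the statement that the functional calculus $p \mapsto p(C_\Fi)$ extends continuously to the disc algebra $A(\overline{\D_R})$, i.e.\ $\overline{\alg(C_\Fi)}^{\|\cdot\|} \supseteq \{f(C_\Fi) : f \in A(\overline{\D_R})\}$ in a suitable sense.

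The main step is then an approximation/density argument: I would take $X \in \biconm{C_\Fi}$, and seek polynomials $p_n$ with $\sup_{|w|\le R'}|p_n(w) - \widetilde{\nu_X}(w)|$ controlled for $R' < R$, where one still needs to convert the half-plane symbol $\nu_X(\lambda)$ into a function of $w = e^\lambda$. The cleanest route is: use the richness of the point spectrum (Lemma \ref{lema rich point spectrum}) — the eigenvectors $e^{\lambda h}$, $e^\lambda \in D$, span a dense subspace for any disc $D \subset \sigma_p(C_\Fi)$. Since $C_\Fi$ has rich point spectrum and, by \eqref{hipotesis norma polinomios}, a bounded Riesz-type functional calculus over $\overline{\D_R}$, one invokes the standard fact (as in \cite[Section 3]{LLPR2}) that an operator with rich point spectrum whose spectrum is a disc $\overline{\D_R}$ and which admits such a polynomial bound has the double commutant property: any $X$ commuting with everything in $\conm{C_\Fi}$ acts on each eigenvector $e^{\lambda h}$ by a scalar $\nu_X(\lambda)$ which, by boundedness and holomorphy, defines a function in $H^\infty(\D_R)$ after the change of variables, and one approximates it by polynomials uniformly on compact subsets of $\D_R$, obtaining $p_n(C_\Fi) \to X$ in WOT by density of the eigenvectors plus the uniform bound \eqref{hipotesis norma polinomios} to control the approximation on all of $H^p$ (not just on the dense span). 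I would carry this out by fixing $f$ in the dense span, writing $f = \sum c_j e^{\lambda_j h}$, checking $p_n(C_\Fi)f \to Xf$, and then using \eqref{hipotesis norma polinomios} to bound $\norm{p_n(C_\Fi)}$ uniformly, so that a standard $\varepsilon/3$ argument extends the convergence to all of $H^p$, giving WOT (indeed SOT) convergence.

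The main obstacle I anticipate is precisely the change of variables between the half-plane parametrization $\lambda$ (natural for $\Delta$ and $\Psi$) and the disc parametrization $w = e^\lambda$ (natural for the polynomial functional calculus of $C_\Fi = C_{\Fi_1}$): one must verify that $\nu_X(\lambda)$, a priori only a bounded holomorphic function on the half-plane $G$, factors through $w = e^\lambda$, i.e.\ is $2\pi i$-periodic. This periodicity should follow from the fact that $X$ commutes with $C_{\Fi_1}$: from $Xe^{\lambda h} = \nu_X(\lambda)e^{\lambda h}$ and $X(e^{(\lambda + 2\pi i)h}) = \nu_X(\lambda + 2\pi i)e^{(\lambda+2\pi i)h}$, together with $e^{(\lambda+2\pi i)h} \in H^p$ being a genuinely different function (not a scalar multiple of $e^{\lambda h}$ unless $h$ has special form), one cannot immediately conclude $\nu_X(\lambda) = \nu_X(\lambda + 2\pi i)$. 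The resolution is to argue instead at the level of approximation without descending to the disc: approximate $\nu_X$ directly on vertical strips $\{\PR(\lambda) \le c\}$, $c < \frac{\pi}{p\gamma}$, by functions of the form $p(e^\lambda)$ — possible because $\{p(e^\lambda) : p \text{ polynomial}\}$ is dense in $H^\infty$ of such a strip in the appropriate topology, or more safely: approximate $\nu_X$ uniformly on compacta by exponential polynomials $\sum a_k e^{k\lambda}$, $k \ge 0$, which are exactly $p(e^\lambda)$ with $p(w) = \sum a_k w^k$, using that $\nu_X$ extends to a bounded holomorphic function whose boundary behavior at $\PR(\lambda) = \frac{\pi}{p\gamma}$ is controlled by \eqref{hipotesis norma polinomios}. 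This exponential-polynomial density on half-planes, combined with the uniform bound \eqref{hipotesis norma polinomios} and the richness of the point spectrum, is the crux; once it is in place the WOT convergence $p_n(C_\Fi) \to X$ and hence $X \in \overline{\alg(C_\Fi)}$ follow routinely.
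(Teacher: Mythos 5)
Your outline assembles the right ingredients (the symbol map $\Psi$ from Theorem \ref{teorema hiperbolico}, the bound \eqref{hipotesis norma polinomios}, bounded polynomial approximation, WOT-compactness for $1<p<\infty$, density of the eigenvectors), but it stalls exactly where the proof has to do its real work: the $2\pi i$-periodicity of $\nu_X=\Psi(X)$. You correctly note that one cannot immediately conclude $\nu_X(\lambda)=\nu_X(\lambda+2\pi i)$, but your proposed way around it --- approximating $\nu_X$ directly on the half-plane $G$ by exponential polynomials $\sum_k a_k e^{k\lambda}$ instead of descending to the disc --- cannot succeed, because every function of the form $p(e^{\lambda})$ is itself $2\pi i$-periodic. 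More to the point, if $p_n(C_\Fi)\to X$ in the weak operator topology, then testing against an eigenvector and a functional $g$ with $\pe{e^{\lambda h},g}\neq 0$ gives $p_n(e^{\lambda})\to\nu_X(\lambda)$ for every $\lambda\in G$, and since $p_n(e^{\lambda})=p_n(e^{\lambda+2\pi i})$ this forces $\nu_X(\lambda)=\nu_X(\lambda+2\pi i)$. So periodicity is not a nuisance you can sidestep by a clever choice of approximants or by restricting to vertical strips (which still contain $\lambda$ and $\lambda+2\pi i$): it is a necessary condition for your whole strategy, and it is precisely the step where the hypothesis $X\in\biconm{C_\Fi}$, rather than mere commutation with the semigroup, must be used. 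Your proposal leaves this unproved, so it has a genuine gap.

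The paper closes it as follows. Since the semiflow is hyperbolic, $\Omega=h(\D)$ lies in a horizontal strip, so $\Impart(h)$ is bounded and $e^{2\pi i k h}\in H^\infty$ for every $k\in\mathbb{Z}$; the multiplication operator $M_{e^{2\pi i k h}}$ is bounded and commutes with $C_\Fi$ (because $C_\Fi(e^{2\pi i k h})=e^{2\pi i k(h+1)}=e^{2\pi i k h}$), hence $X\in\biconm{C_\Fi}$ commutes with it, and
\[
\nu_X(\lambda+2\pi i k)\,e^{(\lambda+2\pi i k)h}=XM_{e^{2\pi i k h}}e^{\lambda h}=M_{e^{2\pi i k h}}Xe^{\lambda h}=\nu_X(\lambda)\,e^{(\lambda+2\pi i k)h},
\]
which gives the periodicity; $\nu_X$ then factors as $\xi_X(e^{\lambda})$ with $\xi_X$ bounded holomorphic on $D(0,r)\setminus\{0\}$, $r=e^{\pi/(p\gamma)}$, and the singularity at $0$ is removable. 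After that your steps essentially coincide with the paper's, with one further technical caveat: to invoke \eqref{hipotesis norma polinomios} you need approximating polynomials with uniformly bounded sup-norm on the closed disc (the paper quotes \cite[Lemma 2.10]{LLPR2} for pointwise convergence with $\norm{p_n}_{H^\infty}\leq\norm{\xi}_{H^\infty}$); plain locally uniform approximation on compact subsets of $D(0,r)$ does not give that bound by itself. With the periodicity argument and this bounded-approximation detail supplied, your plan becomes the paper's proof.
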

Before proceeding with the proof, let us take a closer look at the inequality \eqref{hipotesis norma polinomios}:

\begin{remark}
Consider the Hilbertian Hardy space $H^2$. It is known, by the  von Neumann's inequality, that for every contraction $T\in \EL(H^2)$ (that is, $\norm{T}\leq 1$), the following inequality
	$$\norm{p(T)} \leq \sup\{ |p(z)| : |z|\leq 1 \}$$ holds for every complex polynomial $p\in\C[z]$. In general, if $T$ is any non-zero operator, one may consider $\frac{T}{\norm{T}}$ and apply the previous inequality to obtain 
	$$\norm{p(T)} \leq \sup\{ |p(z)| : |z|\leq \norm{T} \}.$$
	In addition, it is known by \cite[Corollary 3.2]{Siskakis} and \cite[Theorem 2.1]{CD} that $r(C_\Fi\mid_{H^p}) = e^{\frac{\pi}{p\gamma}}$, where $r(C_\Fi\mid_{H^p})$ denotes the spectral radius of a composition operator $C_\Fi$ acting on $H^p$. So, in the Hilbert space case, \eqref{hipotesis norma polinomios} turns into $\norm{C_\Fi\mid_{H^2}} = r(C_\Fi\mid_{H^2}).$ For instance, it holds if $$\Fi_t(z) = e^{-t}z+1-e^{-t} \qquad (z\in \D,t \geq 0)$$ is the hyperbolic semiflow of non-automorphic linear fractional maps (\cite[Theorem 3]{Cowen}), so we recover the result in \cite{LLPR2}.

	On the other hand, if $p\neq 2,$ a von-Neumann type-inequality does not hold in general. In this setting, in order to obtain composition operators $C_\Fi$ verifying \eqref{hipotesis norma polinomios}, it is enough to ask the operator $C_\Fi/\norm{C_\Fi}$ to be \textit{polynomially bounded} (see, for instance, \cite[Definition 2.3.2]{ChP}), together with the equality $\norm{C_\Fi}_{H^p} = r(C_\Fi \mid_{H^p}).$ Whether the operator $C_\Fi/\norm{C_\Fi}$ is polynomially bounded is an open problem, at least to the author's knowledge.
\end{remark}

	Now, we present the proof of Theorem \ref{teorema doble conmutante hiperbólico}.
	
	\medskip
	
\noindent \textit{Proof of Theorem \ref{teorema doble conmutante hiperbólico}}. Again, this proof is based on various ideas from \cite[Section 3]{LLPR2}. Consider the map $\Psi : \biconm{C_\Fi} \rightarrow H^\infty(G)$ constructed in the proof of Theorem \ref{teorema hiperbolico} and recall that, since we are assuming that $\Omega$ does not contain any horizontal strip, then $G = \{ \lambda \in \C: - \infty < \PR(\lambda)< \frac{\pi}{p\gamma} \}.$ Now, if $r=e^{\frac{\pi}{p\gamma}},$ then the exponential function maps $G$ onto $D(0,r)\setminus \{0\}.$

 Let $X\in \biconm{C_\Fi}.$ We are showing that $\Psi(X)(\lambda) = \Psi(X)(\lambda+2\pi i k)$ for every $\lambda \in G$ and $k \in \mathbb{Z}$, which, together with the Factorization Theorem \cite[p. 286]{Remmert}, will yield that there exists $\xi_X$ holomorphic on $D(0,r)\setminus\{0\}$ such that $\Psi(X)(\lambda) = \xi(e^\lambda)$ for every $\lambda \in G.$
 We have
 $$\Psi(X)(\lambda + 2\pi i k) e^{(\lambda+2\pi ik)h} = X e^{(\lambda+2\pi ik)h} = Xe^{ (2\pi i k)h }e^{\lambda h}.$$
 At this point, observe that $e^{(2\pi i k)h} \in H^\infty$, so the multiplication operator $M_{e^{(2\pi i k)h}}$ is bounded. Indeed, it is straightforward to check that it commutes with $C_\Fi,$ so it also commutes with $X$. Thus, we have
 $$ Xe^{ (2\pi i k)h }e^{\lambda h} = XM_{e^{(2\pi i k)h}} e^{\lambda h} = M_{e^{(2\pi i k)h}}Xe^{\lambda h} =  e^{(2\pi i k)h}\Psi(X)(\lambda)e^{\lambda h} = \Psi(X)(\lambda) e^{(\lambda+2\pi ik)h},$$ so $\Psi(X)(\lambda) = \Psi(X)(\lambda+2\pi i k)$, as we wanted. Thus, $\Psi(X)(\lambda) = \xi(e^{\lambda})$ for every $\lambda \in G$ for some holomorphic function $\xi_X$ on $D(0,r)\setminus \{0\}.$ But $\xi_X$ is bounded, so the singularity on $0$ is removable and $\xi_X$ is holomorphic on $D:= D(0,r).$ 
 
 As a consequence, we can define $\tilde{\Psi} : \biconm{C_\Fi} \rightarrow H^\infty(D)$ as $\tilde{\Psi}(X)= \xi_X$. In particular, it follows that
 \begin{equation}\label{propiedad disco}
 Xe^{\lambda h} = \Psi(X)e^{\lambda h}= \tilde{\Psi}(X)(e^\lambda) e^{\lambda h} \qquad (\lambda \in G),
 \end{equation}
 
  and by construction $\tilde{\Psi}$ is an injective algebra homomorphism. 
 
 Now, to finish the proof, we are showing that $\Psi$ maps $\overline{\alg(C_\Fi)}^\sigma$ onto $H^\infty(D)$, which will yield that $\Psi$ is surjective and, by injectivity, $\overline{\alg(C_\Fi)}^\sigma=\biconm{C_\Fi},$ as we want to prove.
 
 Consider then $\xi \in H^\infty(D)$, we are constructing a sequence of polynomials $p_n$ such that $p_n(C_\Fi)$ converges to some operator $X$ in the weak operator topology satisfying $\tilde{\Psi}(X) = \xi.$ By \cite[Lemma 2.10]{LLPR2} there exists a sequence of polynomials $(p_n)_{n\in \N}$ that converges pointwisely to $\xi$ and satisfy that $\norm{p_n}_{H^\infty(D)} \leq \norm{\xi}_{H^\infty(D)}$ for every $n\in \N.$ Thus, consider $X_n = p_n(C_\Fi)$ for each $n\in \N.$ We have, by \eqref{hipotesis norma polinomios}, for every $n\in \N$,
 $$\norm{X_n}=\norm{p_n(C_\Fi)} \leq C \norm{p_n}_{H^\infty(D)} \leq C\norm{\xi}_{H^\infty(D)},   $$ so the sequence $(X_n)_{n\in \N}$ is norm bounded. Thus, since $1<p<\infty,$ there exists a subsequence $(X_{n_k})_{k\in \N}$ which converges in the weak operator topology to some $X\in \overline{\alg(C_\Fi)}^\sigma.$ We are showing that $\tilde{\Psi}(X) = \xi,$ which will finish the proof. For such a purpose, consider $\lambda \in G$ and $f \in H^q$ with $\frac{1}{p}+\frac{1}{q}=1$ and observe that
 \begin{equation*}
  \begin{split}
\pe{Xe^{\lambda h},f} = \pe{\lim\limits_k X_{n_k}e^{\lambda h},f} = \pe{\lim\limits_k \tilde{\Psi}(X_{n_k})(e^\lambda)e^{\lambda h},f} = \pe{\lim\limits_k p_{n_k}(e^\lambda)e^{\lambda h},f} = \pe{\xi(e^\lambda)e^{\lambda h},f},
  \end{split}
 \end{equation*}
so we deduce that $Xe^{\lambda h} = \xi(e^\lambda)$ for every $\lambda \in G$, so by \eqref{propiedad disco} $\tilde{\Psi}(X) = \xi,$ as we wanted to show. \hfill  $\Box$

\begin{remark}
It is also worth mentioning that not every composition operator belonging to a hyperbolic semigroup has the double commutant property, although its associated Koenigs domain has desirable geometric properties. For instance, if one considers $\Fi$ to be a hyperbolic automorphism of $\D$, it is always embedded in a hyperbolic group of automorphisms whose Koenigs domain is a horizontal strip. It was shown in \cite{LLPR2} that the induced composition operator acting on $H^2$ does not enjoy the double commutant property. 

	Indeed, in \cite[Theorem 5.6]{LLPR2} it is proved  that $\biconm{C_\Fi} = \overline{\alg(C_\Fi,C_\Fi^{-1})}^\sigma.$ The methods are quite similar to the ones presented in the proof of Theorem \ref{teorema doble conmutante hiperbólico}, with the main difference that the exponential maps the point spectrum of the associated infinitesimal generator onto an annulus instead of a punctured disc, which in some sense `forces' to consider $C_\Fi^{-1}$ in order to approximate with polynomials in the weak operator topology.
	 Following this line of thinking, consider now $(\Fi_t)_{t\geq 0}$ a hyperbolic semiflow such that $\Omega$ properly contains a horizontal strip. In particular, the induced composition operators $C_{\Fi_t} \in \EL(H^2)$ are not invertible.  This raises the question whether the equality $\biconm{C_\Fi}=\overline{\alg(C_\Fi)}^\sigma$ cannot hold in this setting.
\end{remark}
\subsection{Extended eigenvalues and strong compactness}\label{subseccion 1}
Given an operator $T\in \EL(\X)$, a complex number $\lambda \in \C$ is said to be an \textit{extended eigenvalue} for $T$ is there exists $A\in \EL(\X)$ such that $TA=\lambda AT.$ In such a case, we say that $A$ is an \textit{eigenoperator} for $T$ associated to the extended eigenvalue $\lambda$. The set of extended eigenvalues of $T$ is denoted as $\textnormal{Ext}(T)$. 

\medskip

Recall that, in the proof of Theorem \ref{teorema conmutante no abeliano hiperbolico}, we have shown the identity \eqref{eigenoperator}, which in particular shows that $M_{e^{ix h}}$ is an eigenoperator for $C_{\Fi_t}$ associated to the extended eigenvalue $e^{itx}.$ Exploiting such a property, we are able to state the following result, which describes the extended eigenvalues of composition operators embedded in hyperbolic semigroups: 
\begin{theorem}\label{extended eigenvalues hiperbolico}
	Let $\Fi : \D \rightarrow\D$ be a holomorphic function such  that there exists a hyperbolic holomorphic semiflow $(\Fi_t)_{t\geq 0}$ with $\Fi = \Fi_1.$ Let $\Omega$ be its Koenigs domain and assume that $\inte(\overline{\Omega}) = \Omega$ and that $\C\setminus \overline{\Omega}$ has at most two connected components. Then, 
	\begin{enumerate}
		\item [(a)] Assume that $\Omega$ does not contain any horizontal strip, and let $\rho = \inf \{\PR(z) : z\in \Omega \}.$
		\begin{enumerate}
		\item [(i)] If $\rho > - \infty$, then  $\textnormal{Ext}(C_\Fi) = \overline{\D}\setminus \{0\}.$
		\item [(ii)] If $\rho = - \infty,$ then $\T\subseteq \textnormal{Ext}(C_\Fi)\subseteq \overline{\D}\setminus \{0\}.$
		\end{enumerate}
		\item [(b)] Assume that $\Omega$ contains a horizontal strip. Then $\textnormal{Ext}(C_\Fi) = \T.$
	\end{enumerate}
\end{theorem}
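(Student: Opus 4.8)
The plan is to extract both the lower and upper bounds for $\textnormal{Ext}(C_\Fi)$ from the description of the point spectrum of $\Delta$ in Theorem \ref{teorema betsakos}, combined with the eigenoperator identity \eqref{eigenoperator} already established in the proof of Theorem \ref{teorema conmutante no abeliano hiperbolico}. First I would recall that \eqref{eigenoperator} gives $C_{\Fi_1}M_{e^{ixh}} = e^{ix}M_{e^{ixh}}C_{\Fi_1}$ for every $x\in\R$, and that $e^{ixh}\in H^\infty$ precisely because $\Omega$ lies in a horizontal strip; hence $\T\subseteq \textnormal{Ext}(C_\Fi)$ in every case covered by the theorem. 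This is the ``free'' inclusion and handles the lower bound in (a)(ii) and (b), and half of (a)(i). To push this further in case (a)(i), when $\rho = \inf\{\PR(z):z\in\Omega\} > -\infty$, one translates $\Omega$ so that $\rho = 0$ and observes that for every $\lambda$ with $\PR(\lambda)\geq 0$ the function $e^{\lambda h}$ is bounded on $\D$ (since $\PR(\lambda h(z))\leq 0$ on $\Omega$ when $\rho=0$... more precisely, one uses $\PR(-\lambda h)\le 0$), so $M_{e^{\lambda h}}$ is a bounded multiplication operator, and the same computation as in \eqref{eigenoperator} yields $C_\Fi M_{e^{\lambda h}} = e^{\lambda}M_{e^{\lambda h}}C_\Fi$; as $\lambda$ ranges over the closed right half-plane, $e^\lambda$ ranges over $\overline{\D}^c\cup\T$, i.e. over $\{|w|\ge 1\}$, but we also want $\{0<|w|<1\}$, which we get by the reciprocal trick below. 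Actually the cleaner route for the full disc: eigenvalues $e^\lambda$ with $\PR(\lambda)\le 0$ give $|e^\lambda|\le 1$, so multiplication operators $M_{e^{\lambda h}}$ with $\PR(\lambda)\ge 0$... I would reorganize so that $M_{e^{\lambda h}}$ bounded (i.e. $\PR(\lambda)\le 0$ after normalizing so $\Omega\subset\{\PR(z)\ge 0\}$, wait — $\rho>-\infty$ means $\Omega$ avoids a left half-plane, normalize to $\Omega\subset\{\PR(z)>0\}$; then $\PR(\lambda h)<0$ when $\PR(\lambda)<0$, so $M_{e^{\lambda h}}$ is bounded for $\PR(\lambda)\le 0$, giving extended eigenvalue $e^\lambda$ with $0<|e^\lambda|\le 1$) covers $\overline{\D}\setminus\{0\}$. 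That gives $\overline{\D}\setminus\{0\}\subseteq\textnormal{Ext}(C_\Fi)$.

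For the reverse inclusion $\textnormal{Ext}(C_\Fi)\subseteq\overline{\D}\setminus\{0\}$ in (a), and $\textnormal{Ext}(C_\Fi)\subseteq\T$ in (b), I would argue spectrally using the function $\Psi$ (equivalently $\nu_X$) built in the proof of Theorem \ref{teorema hiperbolico}, adapted to eigenoperators. Suppose $A\neq 0$ and $C_\Fi A = \lambda A C_\Fi$ with $\lambda\neq 0$. Applying $A$ to an eigenvector $e^{\mu h}$ of $C_{\Fi_1}$ (with $e^\mu\in\sigma_p(C_\Fi)$, i.e. $\mu\in\inte(\sigma_p(\Delta))$) gives $C_\Fi(Ae^{\mu h}) = \lambda^{-1} A C_\Fi e^{\mu h} = \lambda^{-1}e^{\mu}\,Ae^{\mu h}$, so whenever $Ae^{\mu h}\neq 0$ it is an eigenvector of $C_\Fi$ with eigenvalue $\lambda^{-1}e^{\mu}$, forcing $\lambda^{-1}e^{\mu}\in\sigma_p(C_\Fi)$, i.e. $\lambda^{-1}e^{\mu} = e^{\mu'}$ for some $\mu'\in\sigma_p(\Delta)$. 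By Lemma \ref{lema rich point spectrum} the span of the $e^{\mu h}$ over $e^\mu$ in any open disc in $\sigma_p(C_\Fi)$ is dense, so $A$ cannot annihilate all of them; thus there is at least one $\mu$ (in fact, by a Baire/analyticity argument, an open set of $\mu$) with $Ae^{\mu h}\neq 0$. Translating through the exponential: if $\Omega$ contains no strip, $\sigma_p(\Delta)$ is (essentially) a left half-plane $\{\PR<\pi/(p\gamma)\}$, so $e^{\mu}$ fills a punctured disc of radius $e^{\pi/(p\gamma)}$; the constraint $\lambda^{-1}e^{\mu}\in\overline{e^{\mu}\text{-disc}}$ for an \emph{open} set of such $\mu$ forces $|\lambda|\ge 1$... combined with the symmetric observation applied to larger/smaller moduli this pins $|\lambda|\le 1$ as well, giving $\lambda\in\overline{\D}\setminus\{0\}$; and when $\Omega$ contains a strip, $\sigma_p(\Delta)$ is a genuine vertical strip symmetric enough that the multiplicative constraint forces $|\lambda| = 1$, i.e. $\lambda\in\T$.

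I expect the main obstacle to be the reverse inclusion, specifically making rigorous the step ``$A$ does not kill all the $e^{\mu h}$ and in fact $Ae^{\mu h}\neq 0$ on an open set of $\mu$.'' One has to rule out the pathology that $A$ kills $e^{\mu h}$ for $\mu$ in a set large enough to still leave $\lambda^{-1}e^{\mu}$ unconstrained; the resolution is that $\mu\mapsto Ae^{\mu h}$ is an $H^p$-valued holomorphic function (same Morera argument as in Lemma \ref{lema rich point spectrum}), so its zero set is either all of the connected domain $\inte(\sigma_p(\Delta))$ — excluded by Lemma \ref{lema rich point spectrum} since that would give $A=0$ — or a relatively closed set with empty interior, leaving an open dense set of good $\mu$. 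On that open set the identity $\lambda^{-1}e^{\mu}\in\sigma_p(C_\Fi)$ holds, and since $\sigma_p(C_\Fi)$ is contained in $\{|w|\le e^{\pi/(p\gamma)}\}$ (resp. in the closed annulus in case (b)) while $e^{\mu}$ already ranges over an open subset of that disc (resp. annulus) accumulating at its outer boundary, the multiplication-by-$\lambda^{-1}$ map must preserve membership, which is only possible if $|\lambda|=1$ in the annulus case and if $|\lambda|\le 1$ in the disc case; the matching lower bound $|\lambda|\ge1$ in case (a)(ii) and in general is exactly the ``free'' inclusion $\T\subseteq\textnormal{Ext}(C_\Fi)$ together with, for (a)(i), the bounded multiplication operators constructed above. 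Assembling these gives precisely the four displayed conclusions.
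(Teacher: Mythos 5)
Your strategy is in substance the paper's own: the inclusions $\T\subseteq\textnormal{Ext}(C_\Fi)$ and, when $\rho>-\infty$, $\overline{\D}\setminus\{0\}\subseteq\textnormal{Ext}(C_\Fi)$ are obtained exactly as you propose, from \eqref{eigenoperator} and from the bounded multiplication operators $M_{e^{\lambda h}}$ with $\PR(\lambda)\le 0$ after normalizing $\rho=0$ (note the boundedness of $e^{\lambda h}$ uses both $\PR(h)\ge 0$ and the strip bound on $\Impart h$, not the literal claim ``$\PR(\lambda h)<0$ when $\PR(\lambda)<0$''). For the reverse inclusions the paper does not redo the eigenvector argument: it observes $0\notin\textnormal{Ext}(C_\Fi)$ by injectivity of $C_\Fi$ and then invokes Lemma \ref{lema rich point spectrum} (rich point spectrum) together with Theorems 3.1 and 3.3 of \cite{LLPZ}, whose proofs are essentially the analyticity/zero-set argument you sketch. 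So you are opening up the black boxes the paper cites rather than taking a different route; your argument that the set of $\mu$ with $Ae^{\mu h}\neq 0$ is the complement of a small closed set (via the $H^p$-valued holomorphy of $\mu\mapsto Ae^{\mu h}$ and Proposition \ref{proposicion densidad}) is sound and is what makes the boundary argument work.

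Two concrete issues in your upper-bound paragraph. First, the intertwining goes the wrong way: from $C_\Fi A=\lambda AC_\Fi$ one gets $C_\Fi(Ae^{\mu h})=\lambda e^{\mu}Ae^{\mu h}$, not $\lambda^{-1}e^{\mu}Ae^{\mu h}$. With your version the constraint at the outer boundary of $\sigma_p(C_\Fi)$ yields $|\lambda|\ge 1$ --- which is what you write at one point --- and this would contradict part (a)(i), where every $\lambda$ with $0<|\lambda|\le 1$ is an extended eigenvalue (as your own multiplication operators show); the sentence claiming this ``pins $|\lambda|\le 1$ as well, giving $\lambda\in\overline{\D}\setminus\{0\}$'' is internally inconsistent, since two-sided bounds would give $\T$. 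With the correct direction, good $\mu$ with $|e^{\mu}|$ approaching $e^{\pi/(p\gamma)}$ force $|\lambda|\le 1$ in case (a), and in case (b) the inner boundary of the annulus supplies the matching $|\lambda|\ge 1$; that fixes the argument. Second, you never verify $0\notin\textnormal{Ext}(C_\Fi)$, which is part of the statement in case (a); it is one line, as in the paper, from the injectivity of $C_\Fi$ (if $C_\Fi A=0$ then $A=0$).
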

\begin{proof}
First, let us assume that $\Omega$ does not contain any horizontal strip. By Theorem \ref{teorema betsakos} and \eqref{spectral mapping theorem point spectrum}, together with the fact that $C_\Fi$ is injective, it follows that $$D(0,e^{\pi/p \gamma })\setminus \{0\}\subseteq \sigma_p(C_\Fi\mid_{H^p})  \subseteq \overline{D(0,e^{\pi/p \gamma })}\setminus \{0\}.$$
Now, assume that $\rho >-\infty.$ In particular, it follows that $e^{\lambda h} \in H^\infty$ for every $\lambda \in \C$ such that $\PR(\lambda) < 0.$ Moreover, $C_{\Fi}e^{\lambda h} = e^{\lambda}e^{\lambda h}$. By considering the multiplication operator $M_{e^{\lambda h}}$ and imitating the argument used to show \eqref{eigenoperator}, we deduce that $$M_{e^{\lambda h}}C_\Fi = e^{\lambda}C_\Fi M_{e^{\lambda h}}$$ for every $\lambda$ with $\PR(\lambda < 0).$ This shows that $ \overline{\D}\setminus \{0\}\subseteq \textnormal{Ext}(C_\Fi).$ For the reverse inclusion, first observe that $0\notin \textnormal{Ext}(C_\Fi)$ since $C_\Fi$ is injective. Thus, it is enough to recall that $C_\Fi$ has rich point spectrum by Lemma \ref{lema rich point spectrum} and apply \cite[Theorem 3.1]{LLPZ} to deduce that  every $\lambda \in \textnormal{Ext}(C_\Fi)$ has modulus $|\lambda|\leq 1$, so $\textnormal{Ext}(C_\Fi)= \overline{\D}\setminus\{0\},$ as we wanted. 

Suppose now that $\rho = -\infty.$ In this case, we have that $e^{\lambda h} \in H^\infty$ if and only if $\PR(\lambda) = 0.$ Then, by \eqref{eigenoperator} we deduce that $\T\subseteq \textnormal{Ext}(C_\Fi).$ On the other hand, by recalling that $0\notin \textnormal{Ext}(C_\Fi)$ and again using that $C_\Fi$ has rich point spectrum, we deduce that $\textnormal{Ext}(C_\Fi)\subseteq \overline{\D}\setminus \{0\}.$

\medskip

To finish the proof, assume that $\Omega$ contains a horizontal strip. In such a case, again by Theorem \ref{teorema betsakos} and \eqref{spectral mapping theorem point spectrum} we have 
$$\{ \lambda \in \C: e^{- \frac{\pi}{p\beta_{\textnormal{max}}}} <|\lambda|<e^{\frac{\pi}{p\gamma}} \} \subseteq \sigma_p(C_\Fi \mid_{H^p})\subseteq \{ \lambda \in \C: e^{- \frac{\pi}{p\beta_{\textnormal{max}}}} <|\lambda| \leq e^{\frac{\pi}{p\gamma}} \}. $$
By \eqref{eigenoperator} we deduce that $\T \subseteq \textnormal{Ext}(C_\Fi).$ Finally, it is enough to recall again that $C_\Fi$ has rich point spectrum and apply \cite[Theorem 3.3]{LLPZ} to deduce that $\textnormal{Ext}(C_\Fi)=\T$, as we wanted to show.
\end{proof}

We end this section with a consequence regarding the strong compactness of composition operators embedded in $C_0$-semigroups. An algebra $\mathcal{A}\subset \EL(\X)$ is said to be \textit{strongly compact} if every bounded subset of $\A$ is relatively compact in the strong operator topology. In particular, an operator $T\in \EL(\X)$ is strongly compact if $\alg(T)$ is a strongly compact algebra.

\begin{theorem}\label{strong compactness hiperbolico}
	Let $\Fi : \D \rightarrow\D$ be a holomorphic function such  that there exists a hyperbolic semiflow $(\Fi_t)_{t\geq 0}$ with $\Fi = \Fi_1.$ Let $\Omega$ be its Koenigs domain  and assume that $\inte(\overline{\Omega}) = \Omega$ and that $\C\setminus \overline{\Omega}$ has at most two connected components. Then, $C_\Fi$ is strongly compact.
\end{theorem}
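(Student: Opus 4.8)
The plan is to deduce strong compactness directly from the density of the eigenvectors of the infinitesimal generator provided by Proposition~\ref{proposicion densidad}, using the elementary fact that a norm-bounded family $\mathcal{B}\subset\EL(H^p)$ is relatively compact in the strong operator topology if and only if, for every $f\in H^p$, the orbit $\mathcal{B}f:=\{Bf:B\in\mathcal{B}\}$ is relatively norm-compact in $H^p$. (One implication is just the continuity of $B\mapsto Bf$; for the other, one realizes the strong closure of $\mathcal{B}$ as a subset of the compact product $\prod_{f\in H^p}\overline{\mathcal{B}f}$ and checks that its limit points are genuine bounded operators of norm at most $\sup_{B\in\mathcal{B}}\norm{B}$.) Since $\alg(C_\Fi)=\{p(C_\Fi):p\in\C[z]\}$ and, after a rescaling, it is enough to treat the unit ball $\{p(C_\Fi):\norm{p(C_\Fi)}\le 1\}$, it suffices to prove that
\[
K_f:=\{\,p(C_\Fi)f\ :\ p\in\C[z],\ \norm{p(C_\Fi)}\le 1\,\}
\]
is relatively norm-compact in $H^p$ for each $f\in H^p$.

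First I would treat $f$ lying in the linear manifold $E:=\Span\{e^{\lambda h}:\lambda\in\inte(\sigma_p(\Delta\mid_{H^p}))\}$, which is dense in $H^p$ by Proposition~\ref{proposicion densidad}. Writing such an $f$ as $f=\sum_{i=1}^{n}c_i\,e^{\lambda_i h}$ with $\lambda_1,\dots,\lambda_n$ distinct, and using $C_\Fi e^{\lambda_i h}=e^{\lambda_i}e^{\lambda_i h}$ (see \eqref{spectral mapping theorem point spectrum} with $t=1$), one obtains $p(C_\Fi)f=\sum_{i=1}^{n}c_i\,p(e^{\lambda_i})\,e^{\lambda_i h}$. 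The key observation is that $p(e^{\lambda_i})$ is an eigenvalue of $p(C_\Fi)$, whence $|p(e^{\lambda_i})|\le\norm{p(C_\Fi)}\le 1$. Consequently $K_f$ is contained in $\{\sum_{i=1}^{n}c_i z_i e^{\lambda_i h}:(z_1,\dots,z_n)\in\overline{\D}^{\,n}\}$, the image of a compact polydisc under a continuous linear map $\C^n\to H^p$, hence a compact subset of $H^p$; in particular $K_f$ is relatively compact.

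For an arbitrary $f\in H^p$ I would transfer this via the uniform bound $\norm{p(C_\Fi)}\le 1$: given $\varepsilon>0$, pick $g\in E$ with $\norm{f-g}_{H^p}<\varepsilon$; then for every admissible polynomial $p$ one has $\norm{p(C_\Fi)f-p(C_\Fi)g}_{H^p}\le\norm{p(C_\Fi)}\,\norm{f-g}_{H^p}<\varepsilon$, so $K_f$ lies in the $\varepsilon$-neighbourhood of the totally bounded set $K_g$. Letting $\varepsilon\to 0$ shows that $K_f$ is totally bounded, and since $H^p$ is complete it is relatively norm-compact. By the criterion recalled above, every bounded subset of $\alg(C_\Fi)$ is then relatively compact in the strong operator topology, i.e.\ $C_\Fi$ is strongly compact.

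I do not expect a substantial obstacle: once Proposition~\ref{proposicion densidad} is available the argument is entirely soft, and the only point requiring care is the \emph{uniformity}, namely that the elementary estimate $|p(e^{\lambda})|\le\norm{p(C_\Fi)}$ bounds the orbits uniformly over the unit ball of $\alg(C_\Fi)$, so that the norm approximation by elements of $E$ genuinely propagates relative compactness from the dense manifold to all of $H^p$. One could instead first invoke Lemma~\ref{lema rich point spectrum}, so that $C_\Fi$ has rich point spectrum, and appeal to the connection between a rich supply of eigenvectors and strong compactness in the spirit of \cite{LLPR3, Shapiro}; the argument above, however, only uses the density statement of Proposition~\ref{proposicion densidad}.
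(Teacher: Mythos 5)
Your argument is correct, but it takes a more self-contained route than the paper. The paper's proof is a two-line citation: it combines Proposition~\ref{proposicion densidad} (density of $\Span\{e^{\lambda h}:\lambda\in\inte(\sigma_p(\Delta\mid_{H^p}))\}$) with the spectral condition for strong compactness of Fern\'andez-Valles and Lacruz \cite[Theorem 2.1]{FL}, which states precisely that an operator with a total set of eigenvectors is strongly compact. What you have done is re-prove the relevant case of that cited theorem from scratch: you use the standard characterization that a norm-bounded family is relatively compact in the strong operator topology if and only if every orbit is relatively norm-compact, the eigenvalue estimate $|p(e^{\lambda})|\le\norm{p(C_\Fi)}$ (valid because $e^{\lambda h}\neq 0$ is an eigenvector of $p(C_\Fi)$ with eigenvalue $p(e^{\lambda})$, via \eqref{spectral mapping theorem point spectrum} with $t=1$), compactness of the image of a polydisc for $f$ in the dense manifold, and a uniform $\varepsilon$-transfer to arbitrary $f$. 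All the steps check out, including the point that only relative compactness of the unit ball of $\alg(C_\Fi)$ is needed after rescaling, and the fact that possible coincidences $e^{\lambda_i}=e^{\lambda_j}$ for distinct $\lambda_i,\lambda_j$ are harmless since you only use containment in the polydisc image. So the mathematical ingredient is identical to the paper's (Proposition~\ref{proposicion densidad} is doing all the work in both cases); what your version buys is independence from the external reference \cite{FL}, at the cost of reproducing an argument that the paper deliberately outsources. Your closing remark that one could instead go through Lemma~\ref{lema rich point spectrum} is unnecessary for this statement, but not wrong.
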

\begin{proof}
This result follows as a byproduct of Proposition \ref{proposicion densidad} and \cite[Theorem 2.1]{FL}.
\end{proof}
\section{Parabolic semiflows}\label{seccion parabolicos}
In this section, we consider composition operators $C_\Fi$ induced by symbols $\Fi$ that can be embedded in a parabolic semiflow. As we noted previously in Section \ref{seccion introduccion}, we will distinguish between zero hyperbolic step and positive hyperbolic step semiflows.

\medskip

The kind of hyperbolic step determines some geometric aspects of the Koenigs domain of the semiflow. The following result holds, as a byproduct of \cite[Theorem 9.3.5 and Proposition 9.3.10]{BCD}:

\begin{proposition}\label{proposicion geometria}
Let $(\Fi_t)_{t\geq 0}$ be a parabolic semiflow, and $\Omega$ be its Koenigs domain. Then:
\begin{enumerate}
		\item [(i)] If $(\Fi_t)_{t\geq 0}$ is of positive hyperbolic step, then $$  \bigcup\limits_{t\geq 0} (\Omega-t) \in \{ \{w\in \C:  \Impart(w)>\nu \}, \{w\in \C:  \textnormal{Im}(w)<\nu \}\} $$ for some $\eta \in \R.$
	\item [(ii)] If $(\Fi_t)_{t\geq 0}$ is of zero hyperbolic step, then $$ \bigcup\limits_{t\geq 0} (\Omega-t) = \C.$$ In particular, $\Omega$ is not contained in any horizontal half-plane.
\end{enumerate}
\end{proposition}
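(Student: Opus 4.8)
The plan is to read off both statements directly from the structure theory of parabolic semiflows in \cite[Chapter 9]{BCD}, reorganising it around the \emph{starlike-at-infinity} set $\Omega_\infty := \bigcup_{t\geq 0}(\Omega - t)$. First I would observe that $\Omega_\infty$ is a domain which is \emph{convex in the positive direction} (if $w\in\Omega_\infty$ and $s\geq 0$ then $w+s\in\Omega_\infty$, directly from the definition and the convexity at infinity of $\Omega$), and that it is invariant under $w\mapsto w+t$ for every $t\in\R$, hence it is a horizontal \emph{band}: there are $-\infty\leq a\leq b\leq +\infty$ with $\{a<\Impart(w)<b\}\subseteq \Omega_\infty \subseteq \{a\leq \Impart(w)\leq b\}$, and (being open and connected in the vertical direction after the translations) in fact $\Omega_\infty$ is exactly an open horizontal strip, half-plane, or all of $\C$. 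This reduces the problem to deciding which of these three alternatives occurs in each of the two cases.

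Next I would bring in the hyperbolic-step dichotomy. By \cite[Theorem 9.3.5]{BCD}, the limit $\lim_{t\to\infty} k_\D(\Fi_t(z),\Fi_{t+1}(z))$ (independence of $z\in\D$ being part of the statement) equals, up to the conformal factor coming from the Koenigs map, the ``distance travelled'' measured intrinsically in $\Omega$; concretely, this quantity is positive precisely when $\Omega_\infty$ is a proper subset of $\C$ bounded by (at least) one horizontal line, and it is zero precisely when $\Omega_\infty = \C$. For the positive hyperbolic step case this already gives that $\Omega_\infty$ is a horizontal strip or a horizontal half-plane; to exclude the strip I would invoke that a parabolic semiflow has Denjoy--Wolff point with $\Fi_t'(\tau)\to 1$ and $\al = 0$, which by \cite{CD} (cf. the hyperbolic characterisation recalled in the Introduction) forbids $\Omega$ — and hence $\Omega_\infty$ — from being contained in a horizontal strip: that would force hyperbolicity. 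Therefore $\Omega_\infty$ is exactly one of the two horizontal half-planes $\{\Impart(w)>\nu\}$ or $\{\Impart(w)<\nu\}$, which is (i). For the zero hyperbolic step case, the characterisation forces $\Omega_\infty=\C$, which is the first assertion of (ii); the ``in particular'' clause is immediate, since if $\Omega$ were contained in $\{\Impart(w)>\nu\}$ (or in $\{\Impart(w)<\nu\}$) then so would every translate $\Omega-t$, contradicting $\Omega_\infty=\C$.

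The only genuinely non-routine point is matching up the intrinsic quantity $\lim_{t\to\infty}k_\D(\Fi_t(z),\Fi_{t+1}(z))$ with the geometry of $\Omega_\infty$; everything else is bookkeeping about translation-invariant planar domains. I expect to handle this by citing \cite[Theorem 9.3.5 and Proposition 9.3.10]{BCD} rather than reproving it: Proposition 9.3.10 gives the uniqueness of the Koenigs function up to additive constants (already quoted in the Introduction), which legitimises speaking of ``the'' Koenigs domain and hence of $\Omega_\infty$, and Theorem 9.3.5 supplies exactly the equivalence between positivity of the hyperbolic step and $\Omega_\infty \neq \C$ together with the identification of $\Omega_\infty$ with a half-plane in that case. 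So the proof is essentially a two-line deduction from those two results plus the elementary remark that a domain invariant under all horizontal translations and convex in the positive real direction must be a horizontal strip, half-plane, or $\C$.
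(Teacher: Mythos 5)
Your proposal is correct and follows essentially the same route as the paper, which offers no independent argument and simply records the statement as a byproduct of \cite[Theorem 9.3.5 and Proposition 9.3.10]{BCD}. The extra bookkeeping you supply (that $\bigcup_{t\geq 0}(\Omega-t)$ is a translation-invariant domain, hence a strip, half-plane, or $\C$, with the strip case excluded by the characterisation of hyperbolicity from \cite{CD}) is sound and merely makes explicit what the paper leaves to the cited reference.
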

Unlike the hyperbolic setting, the point spectrum of the infinitesimal generator for parabolic semiflows is not totally understood yet. In order to illustrate some known situations, we recollect some results from Betsakos that can be found in \cite[Section 6]{Betsakos}.
\begin{theorem}\label{betsakos parabolico}\cite{Betsakos}
	Let $(\Fi_t)_{t\geq 0}$ be a parabolic semiflow, and let $\Omega$ be its Koenigs domain. Let $(C_{\Fi_t})_{t\geq 0}\subset \EL(H^p)$ $(1\leq p < \infty)$ be the corresponding $C_0$-semigroup of composition operators, and let $\Delta$ be its infinitesimal generator.
	\begin{enumerate}
		\item [(i)] $\displaystyle{ \sigma_p(\Delta\mid_{H^p})\subseteq \{ \lambda \in \C: \PR(\lambda)\leq 0 \}}.$
		\item [(ii)] Assume that $\Omega$ is contained in a horizontal half-plane of the form $\{ w\in \C: \Impart(w) > -\eta \},$ $\eta >0$, and that $\partial \Omega$ is contained in a horizontal strip. Then,
		$$\sigma_p(\Delta\mid_{H^p}) = \{ \lambda \in \C: \PR(\lambda) = 0, \Impart(\lambda)\geq 0 \}.$$
		\item [(iii)] Assume that $\C\setminus \Omega$ is contained in a horizontal strip. Then,
		$$\sigma_p(\Delta\mid_{H^p}) = \{0\}.$$
	\end{enumerate}
\end{theorem}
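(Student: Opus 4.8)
The statement is a compilation of results of Betsakos, so the plan is not to reprove it from scratch but to recover each item from the basic description of the point spectrum recalled in the introduction, namely that $\lambda\in\sigma_p(\Delta\mid_{H^p})$ if and only if $e^{\lambda h}\in H^p$, where $h$ is the Koenigs function, together with the convexity at infinity of the Koenigs domain $\Omega=h(\D)$. Throughout I write $w=u+iv$ and note that for $\lambda=a+ib$ one has $|e^{\lambda w}|=e^{au-bv}$.

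For item (i), the key observation is that $e^{\lambda h}$ is a common eigenvector of the whole semigroup with $C_{\Fi_t}e^{\lambda h}=e^{t\lambda}e^{\lambda h}$ by \eqref{spectral mapping theorem point spectrum}; hence $e^{t\PR(\lambda)}=|e^{t\lambda}|\le r(C_{\Fi_t}\mid_{H^p})$ for all $t\ge 0$. Since $(\Fi_t)_{t\geq 0}$ is parabolic, its Koenigs domain is contained in no horizontal strip, so the spectral radius formula $r(C_{\Fi_t}\mid_{H^p})=e^{t\pi/(p\gamma)}$ degenerates (formally $\gamma=\infty$) to $r(C_{\Fi_t}\mid_{H^p})=1$; letting $t\to\infty$ forces $\PR(\lambda)\le 0$.

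For items (ii) and (iii) one analyses which $\lambda$ with $\PR(\lambda)\le 0$ actually give $e^{\lambda h}\in H^p$. In case (ii) the hypothesis $\Omega\subseteq\{\Impart(w)>-\eta\}$ gives $\Impart(h)>-\eta$ on $\D$, so for $\lambda=ib$ with $b\ge 0$ one has $|e^{\lambda h}|=e^{-b\Impart(h)}\le e^{b\eta}$, whence $e^{ibh}\in H^\infty\subseteq H^p$ and $ib\in\sigma_p(\Delta\mid_{H^p})$. Conversely, since $\partial\Omega$ lies in a horizontal strip while $\Omega$ is convex at infinity and unbounded, $\Omega$ contains an upper half-plane $\{\Impart(w)>M\}$; restricting $e^{\lambda h}$ to the preimage of that half-plane (which preserves membership in the corresponding Hardy space, directly from the harmonic-majorant definition) and transporting conformally to the disc, one sees that for any remaining $\lambda$ — that is, $\PR(\lambda)<0$, or $\PR(\lambda)=0$ with $\Impart(\lambda)<0$ — the modulus $|e^{\lambda w}|$ grows exponentially along a suitable direction inside that half-plane, which is incompatible with membership in $H^p$ of a half-plane. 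Combined with (i), this gives the stated equality. In case (iii), the hypothesis that $\C\setminus\Omega$ lies in a horizontal strip forces $\Omega$ to contain both an upper and a lower half-plane, so for every $\lambda\ne 0$ the factor $|e^{\lambda w}|$ is exponentially unbounded on (at least) one of them, and the same restriction-and-transport argument rules out $e^{\lambda h}\in H^p$; since $e^{0\cdot h}\equiv 1\in H^p$ always, we conclude $\sigma_p(\Delta\mid_{H^p})=\{0\}$.

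The main obstacle is precisely the converse directions in (ii) and (iii): upgrading the pointwise unboundedness of $|e^{\lambda h}|$ to an actual failure of $H^p$-membership. This needs the fact that an $H^p$ function on a domain restricts to an $H^p$ function on any subdomain (immediate from harmonic majorants) and a sharp growth bound for $H^p$ functions on a half-plane. These are exactly the technical estimates carried out in \cite[Section 6]{Betsakos}, which for the purposes of the present paper we simply quote.
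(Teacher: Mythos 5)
The paper offers no proof of this statement at all: it is quoted verbatim from Betsakos (\cite[Section 6]{Betsakos}), so there is nothing internal to compare your argument against, and your decision to defer the sharp half-plane estimates to that reference is exactly in the spirit of the paper. Your surrounding sketch is essentially sound: the criterion $\lambda\in\sigma_p(\Delta\mid_{H^p})\iff e^{\lambda h}\in H^p$, the conformal invariance of the harmonic-majorant definition, the restriction of harmonic majorants to subdomains, and the incompatibility of exponential growth of $|e^{\lambda w}|$ along horizontal lines or vertical rays inside a half-plane contained in $\Omega$ with membership in $H^p$ of that half-plane are all correct ingredients, and they do recover (ii) and (iii) once the half-plane growth bound is granted. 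Two small repairs are worth making. First, in (i) you should not ``formally degenerate'' the hyperbolic spectral-radius formula $e^{\pi/(p\gamma)}$, which is only stated for Koenigs domains contained in a strip; the clean justification is the classical fact that a parabolic symbol has angular derivative $1$ at its Denjoy--Wolff point, whence $r(C_{\Fi_t}\mid_{H^p})=1$ for all $t$, and then $|e^{t\lambda}|\leq 1$ gives $\PR(\lambda)\leq 0$ directly. Second, in (ii) the inclusion of an upper half-plane $\{\Impart(w)>M\}$ in $\Omega$ does not follow merely from convexity at infinity and unboundedness: one needs that $\Omega$ is not contained in any horizontal strip (which holds precisely because the semiflow is parabolic rather than hyperbolic), after which the half-plane, being connected and disjoint from $\partial\Omega$, must lie entirely inside $\Omega$. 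With these adjustments your outline is a faithful reconstruction of the quoted result.
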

Observe that the spectral picture described in $(iii)$ can only happen if the semiflow is of zero hyperbolic step, due to Proposition \ref{proposicion geometria}.

\medskip

Now, we state a result concerning the connectedness of $\sigma_p(\Delta)$, which will be crucial in order to state the functional (bi)commutant property for parabolic semiflows:

\begin{lemma}\label{lema conexion}
	Let $(\Fi)_{t\geq 0}$ be a parabolic semiflow, let $\mathcal{T}=(C_{\Fi_t})_{t\geq 0}$ be the corresponding $C_0$-semigroup of composition operators acting on $H^p$ for $1\leq p < \infty$, and let $\Delta$ be its infinitesimal generator. Then, $\sigma_p(\Delta)$ is connected.
\end{lemma}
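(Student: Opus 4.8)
The statement to prove is that $\sigma_p(\Delta)$ is connected for a parabolic semiflow. The key structural fact is that $\lambda \in \sigma_p(\Delta\mid_{H^p})$ if and only if $e^{\lambda h} \in H^p$, where $h$ is the Koenigs function (as recalled in the introduction). Since $\Omega = h(\D)$ is convex at infinity, $e^{\lambda h} \in H^p$ whenever $\PR(\lambda) \le 0$ is small enough that the function decays, and in particular $0 \in \sigma_p(\Delta)$ always (the constants lie in $H^p$). So the plan is to show that $\sigma_p(\Delta)$ is \emph{star-shaped with respect to $0$} along rays going into the left half-plane; more precisely, I would show that if $\lambda \in \sigma_p(\Delta)$ then the segment $[0,\lambda] = \{s\lambda : s \in [0,1]\}$ is also contained in $\sigma_p(\Delta)$. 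Star-shaped sets are connected, so this finishes the proof.

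\medskip

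\textbf{Key step.} Suppose $\lambda \in \sigma_p(\Delta)$, so $\|e^{\lambda h}\|_{H^p} < \infty$, i.e. $\int_{0}^{2\pi} |e^{\lambda h(re^{i\theta})}|^p\, d\theta$ is bounded uniformly in $r$. Writing $\lambda = a + bi$ with $a = \PR(\lambda) \le 0$ (by Theorem \ref{betsakos parabolico}(i)), we have $|e^{\lambda h(z)}|^p = e^{pa\,\PR(h(z))}$. Since $\Omega$ is convex at infinity, $\PR(h(z))$ is bounded below on $\D$ only if $\Omega$ is contained in a right half-plane; in general $\PR(h(z))$ can be very negative, and $a \le 0$ means $e^{pa\,\PR(h(z))}$ is large precisely where $\PR(h(z))$ is very negative. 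The crucial observation is the pointwise inequality: for $s \in [0,1]$ and $a \le 0$,
\begin{equation*}
|e^{s\lambda h(z)}|^p = e^{psa\,\PR(h(z))} \le \max\{1,\ e^{pa\,\PR(h(z))}\} \le 1 + |e^{\lambda h(z)}|^p.
\end{equation*}
Indeed $e^{psat}$ with $sa \le 0$ lies between $1$ (when $t \ge 0$) and $e^{pat}$ (when $t < 0$, using $s \le 1$ and $a \le 0$ so $sat \ge at$... wait, $sat \le at$ when $t<0$ since $s\le 1$; careful with signs). Let me restate cleanly: for any real $u$, $e^{psu}$ with $0\le s\le 1$ satisfies $e^{psu} \le 1$ if $u \le 0$ and $e^{psu}\le e^{pu}$ if $u \ge 0$; applying this with $u = a\,\PR(h(z))$ (note $a \le 0$) gives $|e^{s\lambda h(z)}|^p \le \max\{1, |e^{\lambda h(z)}|^p\} \le 1 + |e^{\lambda h(z)}|^p$. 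Integrating over $\theta$ and taking the supremum over $r$ gives $\|e^{s\lambda h}\|_{H^p}^p \le 2\pi + \|e^{\lambda h}\|_{H^p}^p < \infty$, so $s\lambda \in \sigma_p(\Delta)$.

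\medskip

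\textbf{Expected obstacle.} The only delicate point is keeping the sign bookkeeping straight in the elementary inequality $e^{psu} \le 1 + e^{pu}$ for $0 \le s \le 1$, $u \in \R$ (it holds because the function $s \mapsto e^{psu}$ is monotone, hence its values on $[0,1]$ lie between $e^{0}=1$ and $e^{pu}$). Everything else — the characterization of $\sigma_p(\Delta)$ via $e^{\lambda h} \in H^p$, membership of constants, and the fact that star-shaped implies connected — is either quoted from the introduction or standard. One should also note that the argument does not even use the parabolic hypothesis beyond $\sigma_p(\Delta) \subseteq \{\PR(\lambda) \le 0\}$ and $0 \in \sigma_p(\Delta)$; in fact the same proof shows $\sigma_p(\Delta)$ is always star-shaped with respect to $0$ along rays into $\{\PR \le 0\}$ for any non-elliptic semiflow, which is consistent with Theorem \ref{teorema betsakos}.
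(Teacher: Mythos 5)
Your overall strategy is exactly the paper's: observe $0\in\sigma_p(\Delta)$ (constants), use the characterization $\mu\in\sigma_p(\Delta)\iff e^{\mu h}\in H^p$, and prove star-shapedness at $0$ via a pointwise bound $|e^{s\lambda h}|^p\le\max\{1,|e^{\lambda h}|^p\}$. However, as written your key step contains a genuine computational error: writing $\lambda=a+bi$, the identity $|e^{\lambda h(z)}|^p=e^{pa\,\PR(h(z))}$ is false in general, because $|e^{\lambda h(z)}|=e^{\PR(\lambda h(z))}$ and $\PR(\lambda h(z))=a\,\PR(h(z))-b\,\Impart(h(z))$; you have dropped the cross term $-b\,\Impart(h(z))$, which is typically nonzero. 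Consequently both the first equality and the last inequality in your displayed chain (namely $e^{pa\,\PR(h(z))}\le |e^{\lambda h(z)}|^p$) are unjustified, and your sign discussion about $a\le 0$ (and the appeal to Theorem \ref{betsakos parabolico}(i)) is aimed at the wrong quantity.

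The repair is immediate and, once made, your argument becomes precisely the paper's proof: apply your elementary inequality not with $u=a\,\PR(h(z))$ but with $u=\PR(\lambda h(z))$, which may have either sign. Since $|e^{s\lambda h(z)}|^p=e^{ps\,\PR(\lambda h(z))}$ and $s\mapsto e^{psu}$ is monotone on $[0,1]$, one gets $|e^{s\lambda h(z)}|^p\le\max\{1,e^{pu}\}=\max\{1,|e^{\lambda h(z)}|^p\}$ pointwise, hence $e^{s\lambda h}\in H^p$ whenever $e^{\lambda h}\in H^p$, and $[0,\lambda]\subset\sigma_p(\Delta)$. Note that in this corrected form no information on $\PR(\lambda)$ is needed at all (so the inclusion $\sigma_p(\Delta)\subseteq\{\PR(\lambda)\le 0\}$ plays no role), and indeed the paper's proof uses nothing beyond $\Delta\mathbbm{1}=0$ and the $e^{\lambda h}$-characterization of the point spectrum; your closing remark about general non-elliptic semiflows is correct for the same reason, and even the restriction to rays into $\{\PR\le 0\}$ is unnecessary.
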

\begin{proof}
	First, recall that, by \eqref{expresion generador infinitesimal}, $\Delta \mathbbm{1} = 0,$ so $0 \in \sigma_p(\Delta).$ We are showing that $\sigma_p(\Delta)$ is star-shaped at $0$, that is, the segment $[0,\lambda]\subset \sigma_p(\Delta)$ for every $\lambda \in \sigma_p(\Delta),$ which implies the connectedness of $\sigma_p(\Delta).$ Let $\lambda \in \sigma_p(\Delta)$ and $t\in (0,1)$. Recall that $t\lambda \in \sigma_p(\Delta)$ if and only if $e^{t\lambda h} \in H^p$. We have, for each $z\in \D$,
	$| e^{t\lambda h}|^p = e^{pt\PR(\lambda h)}.$ Now, if $z\in \D$ is such that $\PR(\lambda h(z)) \leq 0$, then $e^{pt\PR(\lambda h(z))} \leq 1,$ and if $z \in \D$ is such that $\PR(\lambda h(z))> 0$, then $e^{pt\PR(\lambda h(z))}\leq e^{p\PR(\lambda h(z))} = \left| e^{\lambda h(z)}\right|^p.$ Hence, $|e^{t\lambda h(z)}|^p\leq \max\{1, |e^{\lambda h(z)}|^p \}$ for every $z\in \D$, and since $e^{\lambda h}\in H^p$, we deduce that $e^{t\lambda h} \in H^p$ as well, and the proof is done.
\end{proof}
Regarding the density of the associated eigenfunctions, the next result holds, following the lines of Proposition \ref{proposicion densidad}:
\begin{proposition}\label{densidad parabolico}\cite[Theorems 1.2 and 1.3]{BGY}
	Let $(\Fi_t)_{t\geq 0}$ be a parabolic semiflow, and let  $\Omega$ be its associated Koenigs domain. Assume that either
	\begin{enumerate}
		\item [(i)] $(\Fi_t)_{t\geq 0}$ is of positive hyperbolic step, $\inte(\overline{\Omega})=\Omega$ and $\C\setminus\overline{\Omega}$ is connected; or
		\item [(ii)]  $(\Fi_t)_{t\geq 0}$ is of zero hyperbolic step, $\Omega$ is contained in a half-plane and $\inte(\overline{\Omega})= \Omega.$
	\end{enumerate}
Then, if $h$ is the Koenigs function associated to $(\Fi_t)_{t\geq 0},$ the linear manifold
$$\Span\{e^{\lambda h} : \lambda \in \sigma_p(\Delta\mid_{H^p}) \}$$ is dense on $H^p$ for  $1\leq p <\infty.$
\end{proposition}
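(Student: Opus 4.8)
The plan is to follow the scheme of the proof of Proposition~\ref{proposicion densidad}: reduce the statement to a density assertion in the Hardy space $H^p(\Omega)$ of the Koenigs domain, and then feed the geometric hypotheses into the corresponding theorems of Bracci, Gallardo-Guti\'errez and Yakubovich.

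First I would transfer the problem to $H^p(\Omega)$. Since $h$ is univalent with $h(\D)=\Omega$, the map $f\mapsto f\circ h$ is a topological isomorphism from $H^p(\Omega)$ onto $H^p$ (a least harmonic majorant of $|f|^p$ on $\Omega$ pulls back, via $h$, to one of $|f\circ h|^p$ on $\D$), and it carries $e^{\lambda z}\in H^p(\Omega)$ to $e^{\lambda h}\in H^p$. As recalled in Section~\ref{seccion introduccion}, $\lambda\in\sigma_p(\Delta\mid_{H^p})$ holds exactly when $e^{\lambda h}\in H^p$, i.e.\ when $e^{\lambda z}\in H^p(\Omega)$. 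Hence the asserted density of $\Span\{e^{\lambda h}:\lambda\in\sigma_p(\Delta\mid_{H^p})\}$ in $H^p$ is equivalent to the density of $\Span\{e^{\lambda z}\in H^p(\Omega):\lambda\in\C\}$ in $H^p(\Omega)$.

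Next, any $g\in H^\infty(\Omega)$ admits a constant harmonic majorant of $|g|^p$, so $H^\infty(\Omega)\subseteq H^p(\Omega)$; in particular every $\lambda$ for which $e^{\lambda z}$ is bounded on $\Omega$ lies in $\sigma_p(\Delta\mid_{H^p})$, whence
$$\Span\{e^{\lambda z}\in H^\infty(\Omega):\lambda\in\C\}\ \subseteq\ \Span\{e^{\lambda z}\in H^p(\Omega):\lambda\in\C\}.$$
Under hypothesis (i) the domain $\bigcup_{t\geq 0}(\Omega-t)$ is, by Proposition~\ref{proposicion geometria}, a horizontal half-plane and $\C\setminus\overline{\Omega}$ is connected, which is the configuration covered by \cite[Theorem 1.2]{BGY}; under hypothesis (ii) $\Omega$ is contained in a (necessarily non-horizontal) half-plane with $\inte(\overline{\Omega})=\Omega$, which is the configuration covered by \cite[Theorem 1.3]{BGY}. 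In either case those results yield that $\Span\{e^{\lambda z}\in H^\infty(\Omega):\lambda\in\C\}$ is weak-star dense in $H^\infty(\Omega)$.

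Finally I would promote this to the conclusion exactly as in Proposition~\ref{proposicion densidad}: weak-star density in $H^\infty(\Omega)$ implies weak density of the same linear manifold in $H^p(\Omega)$ for $1<p<\infty$ (the inclusion $H^\infty(\Omega)\hookrightarrow H^p(\Omega)$ being continuous with compatible duality), Mazur's Lemma \cite[Corollary 3, Chapter 2]{Diestel} then converts weak density into norm density for $1<p<\infty$, and the continuous injection $H^p(\Omega)\hookrightarrow H^1(\Omega)$ settles $p=1$. Pulling back through $f\mapsto f\circ h$ gives the statement. The only genuine input is the invocation of \cite{BGY}; the point demanding care is matching hypotheses (i) and (ii) to the hypotheses of Theorems 1.2 and 1.3 of \cite{BGY}, in particular translating the positive/zero hyperbolic step dichotomy (via Proposition~\ref{proposicion geometria}) into the half-plane conditions used there, the remainder being the same routine functional-analytic assembly already performed for Proposition~\ref{proposicion densidad}.
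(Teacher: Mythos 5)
Your proposal is correct and matches the paper's intent exactly: the paper states this proposition as a direct consequence of \cite[Theorems 1.2 and 1.3]{BGY}, ``following the lines of'' Proposition~\ref{proposicion densidad}, which is precisely the reduction to $H^p(\Omega)$ via $f\mapsto f\circ h$, the weak-star density of bounded exponentials from \cite{BGY}, and the Mazur's Lemma plus $H^p(\Omega)\hookrightarrow H^1(\Omega)$ assembly that you carry out. Your matching of hypotheses (i) and (ii) to Theorems 1.2 and 1.3 of \cite{BGY} (using Proposition~\ref{proposicion geometria} for the positive/zero hyperbolic step dichotomy) is exactly the verification the paper leaves implicit.
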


\subsection{The functional (bi)commutant property} 
With the previous results in hand, we can state our main result for the functional bicommutant property for this setting:

\begin{theorem}\label{teorema parabolico}
	Let $\Fi : \D \rightarrow\D$ be a holomorphic function such  that there exists a parabolic semiflow $(\Fi_t)_{t\geq 0}$ with $\Fi = \Fi_1.$ Let $\Omega$ be its Koenigs domain. Assume that either 
	\begin{enumerate}
		\item [(i)] $(\Fi_t)_{t\geq 0}$ is of positive hyperbolic step, $\inte(\overline{\Omega})=\Omega$ and $\C\setminus\overline{\Omega}$ is connected; or
		\item [(ii)]  $(\Fi_t)_{t\geq 0}$ is of zero hyperbolic step, $\Omega$ is contained in a half-plane and $\inte(\overline{\Omega})= \Omega.$
	\end{enumerate}
Then,  $C_\Fi \in \EL(H^p)$ has the functional bicommutant property for $1\leq p<\infty$. In particular, there exist no non-trivial idempotents lying in $\biconm{C_\Fi}.$
\end{theorem}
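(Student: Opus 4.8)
The plan is to mimic the construction in the proof of Theorem \ref{teorema hiperbolico}, replacing Proposition \ref{proposicion densidad} by Proposition \ref{densidad parabolico} and the half-plane/strip structure of $\inte(\sigma(\Delta\mid_{H^p}))$ by the connectedness of $\sigma_p(\Delta)$ established in Lemma \ref{lema conexion}. The essential point is that in either case (i) or (ii), Proposition \ref{densidad parabolico} guarantees that $\Span\{e^{\lambda h} : \lambda \in \sigma_p(\Delta\mid_{H^p})\}$ is dense in $H^p$, so the eigenfunctions $e^{\lambda h}$ of $\Delta$ form a separating family for operators, which is exactly what powers the argument.

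First I would set $G := \sigma_p(\Delta\mid_{H^p})$, which by Lemma \ref{lema conexion} is a connected subset of $\C$ (it need not be open now, but Definition \ref{definicion functional commutant} only requires connectedness). For $X\in\biconm{C_\Fi}$ and $\lambda\in G$, recall $e^{\lambda h}\in H^p$ and $C_{\Fi_t}e^{\lambda h}=e^{\lambda t}e^{\lambda h}$ by \eqref{spectral mapping theorem point spectrum}; the computation in \eqref{igualdad biconmutante} then shows $Xe^{\lambda h}$ is again a common eigenvector of the semigroup, hence $Xe^{\lambda h}=\nu_X(\lambda)e^{\lambda h}$ for a scalar function $\nu_X$ on $G$. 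Pairing against $\mathbbm{1}\in H^p$ (using $\pe{f,\mathbbm{1}}=f(0)$ and $\pe{e^{\lambda h},\mathbbm{1}}=e^{\lambda h(0)}\neq 0$) gives $\nu_X(\lambda)=e^{-\lambda h(0)}\pe{Xe^{\lambda h},\mathbbm{1}}$; I would note that $\lambda\mapsto e^{\lambda h}\in H^p$ is holomorphic on $\inte(G)$ by the Morera argument of Lemma \ref{lema rich point spectrum}, so $\nu_X$ is at least continuous on $G$ (holomorphic on the interior), in particular $\nu_X\in\mathcal{C}(G)$. Then I define $\Psi:\biconm{C_\Fi}\to\mathcal{C}(G)$ by $\Psi(X)=\nu_X$; well-definedness and the homomorphism property follow verbatim as in Theorem \ref{teorema hiperbolico} from the identity $Xe^{\lambda h}=\Psi(X)(\lambda)e^{\lambda h}$, and injectivity is immediate from the density in Proposition \ref{densidad parabolico}. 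Setting $\A:=\ran(\Psi)$, which is a subalgebra of $\mathcal{C}(G)$, and $\Phi:=\Psi^{-1}:\A\to\biconm{C_\Fi}$, we obtain an algebra isomorphism onto $\biconm{C_\Fi}$ with $G$ connected, which is exactly the functional bicommutant property; the absence of non-trivial idempotents then follows from Theorem \ref{main result}.

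The two hypotheses (i) and (ii) enter only through Proposition \ref{densidad parabolico} (density of the span of eigenfunctions) and through Lemma \ref{lema conexion} (connectedness of $G$); Lemma \ref{lema conexion} is stated with no geometric hypothesis, so that part is automatic. The main obstacle, and the only genuine difference from the hyperbolic case, is that here $\sigma_p(\Delta)$ may be a proper, possibly quite thin set — for instance $\{0\}$ in case \ref{betsakos parabolico}(iii), or a half-line as in \ref{betsakos parabolico}(ii) — rather than an open half-plane, so one must be careful that the boundedness argument $|\nu_X(\lambda)|=|\nu_X(\lambda)|\leq\norm{X}$ (since $\nu_X(\lambda)\in\sigma_p(X)$) and the continuity/holomorphy of $\nu_X$ still make sense on all of $G$; but continuity of $\lambda\mapsto e^{\lambda h}$ into $H^p$ holds on the closure as well by dominated convergence together with the convexity-at-infinity bound $|e^{\lambda h}|\leq \max\{1,|e^{\mu h}|\}$ used in Lemma \ref{lema conexion}, so $\nu_X\in\mathcal{C}(G)$ throughout. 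No topology is imposed on $\mathcal{C}(G)$, so we need nothing more than pointwise algebraic structure, and the argument closes.
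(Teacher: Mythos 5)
Your proposal is correct and follows essentially the same route as the paper's proof: take $G=\sigma_p(\Delta\mid_{H^p})$ (connected by Lemma \ref{lema conexion}), extract $\nu_X$ from the common-eigenvector identity \eqref{igualdad biconmutante}, show $\nu_X\in\mathcal{C}(G)$ via $\nu_X(\lambda)=e^{-\lambda h(0)}\pe{Xe^{\lambda h},\mathbbm{1}}$, and use the density in Proposition \ref{densidad parabolico} to make $\Psi$ injective, then invert onto $\ran(\Psi)\subseteq\mathcal{C}(G)$. The only cosmetic deviation is your justification of continuity (a dominated-convergence bound, where the inequality from Lemma \ref{lema conexion} strictly speaking only dominates along segments $[0,\mu]$, so a local domination is needed) versus the paper's appeal to weak continuity of $\lambda\mapsto e^{\lambda h}$; this does not affect the substance.
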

\begin{proof}
Let $G = \sigma_p(\Delta\mid_{H^p})$, and recall that it is a connected set by Lemma \ref{lema conexion}. We will construct an algebra isomorphism from $\biconm{C_\Fi}$ to a certain subalgebra of $\mathcal{C}(G).$ For this purpose, let $X \in \biconm{C_\Fi}$ and observe that, for every $\lambda \in G,$ \eqref{igualdad biconmutante} holds. As in the proof of Theorem \ref{teorema hiperbolico}, it follows that there exists a map $\nu_X : G \rightarrow \C$ such that $Xe^{\lambda h} = \nu_X(\lambda)e^{\lambda h}.$ We are showing that $\nu_X$ is continuous.  Observe that the map $\lambda \in G \mapsto e^{\lambda h} \in H^p$ is continuous considering the weak topology of $H^p.$ Thus, arguing as in the proof of Theorem \ref{teorema hiperbolico}, we deduce that $\nu_X(\lambda)=e^{-\lambda h(0)}\pe{Xe^{\lambda h},\mathbbm{1}},$ which yields the continuity of $\Psi.$ Thus, $\nu_X \in \mathcal{C}(G)$ for every $X\in \biconm{C_\Fi}.$

At this point, we may define the map $\Psi : \biconm{C_\Fi} \rightarrow \mathcal{C}(G),$ which satisfies the identity \eqref{igualdad clave}. Hence, it is enough to mimic the arguments of the proof of Theorem \ref{teorema hiperbolico}, having into account that the linear manifold $$\Span \{e^{\lambda h} : \lambda \in G \}$$ is dense on $H^p$ by Proposition \ref{densidad parabolico}, to deduce that $\Psi$ is an injective algebra homomorphism. Finally,  consider $\Phi := \Psi^{-1} : \ran(\Psi) \rightarrow \biconm{C_\Fi},$ which is an algebra isomorphism. Then, $C_\Fi$ has the functional bicommutant property and the proof is done.
\end{proof}
As in the hyperbolic case (see Corollary \ref{corolario conmutante semigrupo hiperbolico}), the previous proof allows us to derive the following result, regarding the commutant of the $C_0-$semigroup:

\begin{corollary}\label{corolario conmutante semigrupo parabolico}
Let $(\Fi_t)_{t\geq0}$ be a parabolic flow, let Let $\Omega$ be its Koenigs domain and denote by $\mathcal{T}= (C_{\Fi_t})_{t\geq 0}$ it associated $C_0-$semigroup acting on $H^p$ for $1\leq p <\infty$.  Assume that either 
\begin{enumerate}
	\item [(i)] $(\Fi_t)_{t\geq 0}$ is of positive hyperbolic step, $\inte(\overline{\Omega})=\Omega$ and $\C\setminus\overline{\Omega}$ is connected; or
	\item [(ii)]  $(\Fi_t)_{t\geq 0}$ is of zero hyperbolic step, $\Omega$ is contained in a half-plane and $\inte(\overline{\Omega})= \Omega.$
\end{enumerate}
  Then, $\mathcal{T}'$ is abelian and does not contain any non-trivial idempotent.
\end{corollary}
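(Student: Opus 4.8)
The plan is to repeat, essentially verbatim, the construction carried out in the proof of Theorem \ref{teorema parabolico}, after observing that the only property of an operator $X\in\biconm{C_\Fi}$ actually exploited there is that $X$ commutes with $C_{\Fi_t}$ for every $t>0$ — and this is precisely the defining condition for membership in $\mathcal{T}'=\bigcap_{t>0}\conm{C_{\Fi_t}}$. This mirrors the way Corollary \ref{corolario conmutante semigrupo hiperbolico} was deduced from Theorem \ref{teorema hiperbolico} in the hyperbolic case.

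First I would set $G=\sigma_p(\Delta\mid_{H^p})$, which is connected by Lemma \ref{lema conexion}. For each $X\in\mathcal{T}'$ and each $\lambda\in G$, identity \eqref{igualdad biconmutante} holds — its derivation uses only $XC_{\Fi_t}=C_{\Fi_t}X$ — so $Xe^{\lambda h}$ is again a joint eigenvector of the semigroup $(C_{\Fi_t})_{t\geq 0}$, and hence $Xe^{\lambda h}=\nu_X(\lambda)e^{\lambda h}$ for some scalar function $\nu_X:G\to\C$. As in the proof of Theorem \ref{teorema parabolico}, pairing with the constant function $\mathbbm{1}$ gives $\nu_X(\lambda)=e^{-\lambda h(0)}\pe{Xe^{\lambda h},\mathbbm{1}}$, and together with the weak continuity of $\lambda\mapsto e^{\lambda h}$ this yields $\nu_X\in\mathcal{C}(G)$. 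One thus obtains a well-defined map $\Psi:\mathcal{T}'\to\mathcal{C}(G)$ with $Xe^{\lambda h}=\Psi(X)(\lambda)e^{\lambda h}$; linearity and multiplicativity of $\Psi$ follow from this identity exactly as in Theorem \ref{teorema hiperbolico}, and injectivity follows from the density of $\Span\{e^{\lambda h}:\lambda\in G\}$ in $H^p$ given by Proposition \ref{densidad parabolico}.

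Consequently $\mathcal{T}'$ is algebra-isomorphic, via $\Psi$, to the subalgebra $\ran(\Psi)$ of $\mathcal{C}(G)$. Since $\mathcal{C}(G)$ is commutative, $\mathcal{T}'$ is abelian. To rule out non-trivial idempotents I would reproduce the argument of Theorem \ref{main result}: if $J\in\mathcal{T}'$ were a non-trivial idempotent then $\Psi(J)$ would be a continuous function $f$ on the connected set $G$ with $f^2=f$ and $f\not\equiv 0,1$, which is impossible because $f(G)\subseteq\{0,1\}$ forces $f$ to be constant.

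I do not expect a genuine obstacle. Every ingredient — connectedness of $G$ (Lemma \ref{lema conexion}), density of the eigenfunctions (Proposition \ref{densidad parabolico}), and continuity of $\nu_X$ — has already been established in the parabolic setting, and the passage from $\biconm{C_\Fi}$ to $\mathcal{T}'$ is free because $\biconm{C_\Fi}\subseteq\mathcal{T}'\subseteq\conm{C_{\Fi_t}}$ and the argument never used more than commutation with the entire semigroup. The only mild care needed is checking that $\Psi$, now restricted to the possibly larger algebra $\mathcal{T}'$, is still well defined and lands in a genuine subalgebra of $\mathcal{C}(G)$ — but this is immediate from the joint-eigenvector identity.
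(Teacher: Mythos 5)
Your proposal is correct and follows essentially the same route as the paper: the paper deduces this corollary by observing, exactly as you do, that the proof of Theorem \ref{teorema parabolico} only uses that $X$ commutes with every $C_{\Fi_t}$, which is precisely membership in $\mathcal{T}'$, mirroring Corollary \ref{corolario conmutante semigrupo hiperbolico}. Your added details (abelianness from commutativity of $\mathcal{C}(G)$, and the idempotent argument via connectedness as in Theorem \ref{main result}) are exactly the intended justification.
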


 Unlike the case of the hyperbolic semiflows (see Theorem \ref{teorema conmutante no abeliano hiperbolico}) there are situations in which one can derive the functional commutant property for composition operators embedded in parabolic semiflows. In this respect, we introduce the following lemma, which is a straightforward adaptation of an argument exposed by Shapiro for linear fractional parabolic composition operators \cite[p. 862]{Shapiro}:
 
 \begin{lemma}\label{lema dimension kernel}
Let $\Fi : \D\rightarrow \D$ be a holomorphic function, and assume that
\begin{equation}\label{blaschke condition}
\sumn 1-|\Fi_n(0)| = \infty,
\end{equation} where $\Fi_n = \underbrace{\Fi \circ \ldots \circ \Fi}_{n \text{ times}}$ for every $n\in \N.$ Then, if $\lambda \in \sigma_p(C_\Fi),$ $\ker (C_\Fi-\lambda I)$ is one-dimensional.
 \end{lemma}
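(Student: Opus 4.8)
The plan is to run the classical Blaschke–sequence argument, in the form used by Shapiro for parabolic linear fractional symbols. Fix $\lambda \in \sigma_p(C_\Fi)$ and let $f \in H^p\setminus\{0\}$ be an eigenfunction, $f\circ\Fi = \lambda f$. Since $\Fi$ is non-constant, $C_\Fi$ is injective, so $0\notin\sigma_p(C_\Fi)$ and in particular $\lambda \neq 0$. Iterating the identity $f(\Fi(z)) = \lambda f(z)$ along the orbit of the origin yields
\begin{equation*}
 f(\Fi_n(0)) = \lambda^n f(0) \qquad (n\geq 0).
\end{equation*}

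First I would prove that $f(0) \neq 0$. Suppose not; then the displayed identity shows that $f$ vanishes at every point $\Fi_n(0)$, $n\geq 1$. By \eqref{blaschke condition} the points $\Fi_n(0)$, $n\ge1$, form a non-Blaschke sequence in $\D$, that is $\sumn (1-|\Fi_n(0)|) = \infty$; but the zero set of a nonzero function in $H^p$ (indeed in the Nevanlinna class) must satisfy the Blaschke condition. Hence $f\equiv 0$, a contradiction, so $f(0)\neq 0$.

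With this in hand, one-dimensionality is immediate. Let $f,g \in \ker(C_\Fi - \lambda I)$ be nonzero; by the previous paragraph $f(0)\neq 0$ and $g(0)\neq 0$, so we may set $c = g(0)/f(0)$ and consider $F = g - cf$. Then $F\in \ker(C_\Fi-\lambda I)$ and $F(0)=0$; applying the nonvanishing statement to $F$ forces $F\equiv 0$, i.e. $g = cf$. Thus any two eigenvectors for $\lambda$ are proportional, and since $\lambda\in\sigma_p(C_\Fi)$ the space $\ker(C_\Fi-\lambda I)$ is exactly one-dimensional.

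The only delicate point — hence the main obstacle — is the step that reads off from \eqref{blaschke condition} that $(\Fi_n(0))_{n\ge1}$ is a genuine non-Blaschke sequence: one needs the orbit points to genuinely contribute to the divergence (for instance that the orbit is an infinite set of distinct points, which is automatic when $\Fi$ has no fixed point in $\D$, the situation relevant here), so that $\sumn(1-|\Fi_n(0)|)=\infty$ really contradicts the Blaschke condition on a zero set. Once that is granted, the rest is the routine fact that nonzero $H^p$ functions have Blaschke zero sets together with linearity of the eigenspace. Alternatively, the same conclusion can be reached without isolating the nonvanishing step: for eigenfunctions $f,g$ the meromorphic function $g/f$ is of bounded type and $\Fi$-invariant, hence constant along the orbit of $0$, and therefore constant on $\D$, since otherwise $g/f$ minus that constant would be a nonzero function of bounded type vanishing on a non-Blaschke sequence.
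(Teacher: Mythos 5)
Your proof is correct and is essentially the paper's own argument: the paper offers no proof of this lemma beyond citing Shapiro, and what you write is exactly that cited argument — iterate to get $f(\Fi_n(0))=\lambda^n f(0)$, conclude eigenfunctions cannot vanish at $0$ since the orbit fails the Blaschke condition, then kill the linear combination $g-(g(0)/f(0))f$. Your caveat about the orbit needing to consist of (infinitely many) distinct points is a fair observation about the statement itself, and as you note it is automatic in the paper's intended setting, where $\Fi$ has no fixed point in $\D$ and the orbit tends to the boundary.
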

It is necessary to remark that, if \eqref{blaschke condition} holds for a holomorphic map $\Fi:\D\rightarrow\D$ embedded in a parabolic semiflow $(\Fi_t)_{t\geq 0}$, then it is of zero hyperbolic step \cite[Lemma 2.6]{BMS}, and its associated Koenigs domain is not contained in any horizontal half-plane.
 \begin{theorem}\label{functional commutant parabolico}
Let $\Fi : \D \rightarrow\D$ be a holomorphic function such  that there exists a parabolic semiflow $(\Fi_t)_{t\geq 0}$ with $\Fi = \Fi_1,$ and let $\Omega$ be its Koenigs domain. Assume that it is contained in a half-plane and $\inte(\overline{\Omega})=\Omega$. If 
$$\sumn 1-|\Fi_n(0)| = \infty,$$
then $C_\Fi \in \EL(H^p)$ has the functional commutant property for $1\leq p<\infty$. In particular, there exist no non-trivial idempotents lying in $\conm{C_\Fi}.$
 \end{theorem}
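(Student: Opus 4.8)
The plan is to transcribe the argument used for Theorem \ref{teorema parabolico}, replacing $\biconm{C_\Fi}$ by $\conm{C_\Fi}$ and compensating for the loss of the whole semigroup by invoking Lemma \ref{lema dimension kernel}. First I would observe that the hypotheses fit together: since $\sumn 1-|\Fi_n(0)|=\infty$, the semiflow $(\Fi_t)_{t\geq 0}$ is of zero hyperbolic step by \cite[Lemma 2.6]{BMS} (as recorded after Lemma \ref{lema dimension kernel}), so, together with $\Omega$ being contained in a half-plane and $\inte(\overline{\Omega})=\Omega$, we are in case (ii) of Proposition \ref{densidad parabolico}; hence $\Span\{e^{\lambda h}:\lambda\in\sigma_p(\Delta\mid_{H^p})\}$ is dense in $H^p$. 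The Blaschke-type condition also makes Lemma \ref{lema dimension kernel} available, so $\ker(C_\Fi-\mu I)$ is one-dimensional for every $\mu\in\sigma_p(C_\Fi)$.

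Set $G=\sigma_p(\Delta\mid_{H^p})$, which is connected by Lemma \ref{lema conexion}. For $\lambda\in G$ the function $e^{\lambda h}$ spans $\ker(\Delta-\lambda I)$, so $C_\Fi e^{\lambda h}=e^{\lambda}e^{\lambda h}$ by \eqref{spectral mapping theorem point spectrum}; thus $e^{\lambda h}$ spans $\ker(C_\Fi-e^{\lambda}I)$ by Lemma \ref{lema dimension kernel}. Given $X\in\conm{C_\Fi}$, the vector $Xe^{\lambda h}$ again lies in $\ker(C_\Fi-e^{\lambda}I)=\C e^{\lambda h}$, so there is a unique scalar $\nu_X(\lambda)$ with $Xe^{\lambda h}=\nu_X(\lambda)e^{\lambda h}$. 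Exactly as in the proof of Theorem \ref{teorema hiperbolico}, pairing against $\mathbbm{1}$ gives $\nu_X(\lambda)=e^{-\lambda h(0)}\pe{Xe^{\lambda h},\mathbbm{1}}$; since $\lambda\mapsto e^{\lambda h}$ is continuous into $H^p$ with its weak topology and $X$ is bounded, $\nu_X\in\mathcal{C}(G)$ (and $|\nu_X(\lambda)|\leq\norm{X}$ because $\nu_X(\lambda)\in\sigma_p(X)$ whenever $Xe^{\lambda h}\neq 0$).

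Then I would define $\Psi:\conm{C_\Fi}\to\mathcal{C}(G)$ by $\Psi(X)=\nu_X$ and verify, as in Theorem \ref{teorema parabolico}, that $\Psi$ is a well-defined linear and multiplicative unital map: multiplicativity follows from $XYe^{\lambda h}=X(\nu_Y(\lambda)e^{\lambda h})=\nu_X(\lambda)\nu_Y(\lambda)e^{\lambda h}$ together with $\conm{C_\Fi}$ being an algebra. Injectivity is where the density from Proposition \ref{densidad parabolico} is used: $\nu_X\equiv 0$ forces $X$ to annihilate the dense manifold $\Span\{e^{\lambda h}:\lambda\in G\}$, hence $X=0$. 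Thus $\Psi$ is an algebra isomorphism onto the subalgebra $\ran(\Psi)\subseteq\mathcal{C}(G)$, and $\Phi:=\Psi^{-1}:\ran(\Psi)\to\conm{C_\Fi}$ witnesses the functional commutant property; the absence of non-trivial idempotents in $\conm{C_\Fi}$ then follows from Theorem \ref{main result}.

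The crux is the one-dimensionality of the eigenspaces of $C_\Fi$. In the bicommutant setting (Theorem \ref{teorema parabolico}) one uses that $X$ commutes with every $C_{\Fi_t}$ to force $Xe^{\lambda h}$ to be a \emph{common} eigenvector of the semigroup, hence automatically proportional to $e^{\lambda h}$; here $X$ commutes only with $C_\Fi$, so without extra input $\ker(C_\Fi-e^{\lambda}I)$ could be strictly larger than $\C e^{\lambda h}$ (for instance, if $\lambda$ and $\lambda+2\pi i k$ both belonged to $\sigma_p(\Delta\mid_{H^p})$, the linearly independent functions $e^{\lambda h}$ and $e^{(\lambda+2\pi i k)h}$ would both be eigenvectors for the eigenvalue $e^{\lambda}$). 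It is precisely the hypothesis $\sumn 1-|\Fi_n(0)|=\infty$, through Lemma \ref{lema dimension kernel}, that excludes this; once it is in force the rest of the proof is a routine adaptation of the arguments already carried out for Theorems \ref{teorema hiperbolico} and \ref{teorema parabolico}.
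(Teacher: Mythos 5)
Your proposal is correct and follows essentially the same route as the paper: the paper likewise derives zero hyperbolic step and one-dimensionality of $\ker(C_\Fi-e^{\lambda}I)$ from the Blaschke-type condition via Lemma \ref{lema dimension kernel}, uses the commutation relation to conclude $Xe^{\lambda h}=\nu_X(\lambda)e^{\lambda h}$, and then argues exactly as in Theorem \ref{teorema parabolico} (connectedness of $\sigma_p(\Delta\mid_{H^p})$, continuity of $\nu_X$, injectivity of $\Psi$ from the density in Proposition \ref{densidad parabolico}, and Theorem \ref{main result}). Your explicit identification of the one-dimensionality of the eigenspaces as the substitute for the semigroup-commutation argument is precisely the point the paper's brief proof relies on.
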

 \begin{proof}
Since $\sumn 1-|\Fi_n(0)| = \infty$, it follows that $(\Fi_t)_{t\geq 0}$ is of zero hyperbolic step and the eigenfunctions $e^{\lambda h}$ has multiplicity one for $C_\Fi$ for all $\lambda \in \sigma_p(\Delta\mid_{H^p})$ by Lemma \ref{lema dimension kernel}. Then, if $X\in\conm{C_\Fi}$, observe that
$$Xe^{\lambda h} = Xe^{-\lambda}C_\Fi e^{\lambda h} = e^{-\lambda} C_\Fi(Xe^{\lambda h}),$$ so we deduce that there exists a map $\nu_X: \sigma_p(\Delta\mid_{H^p}) \rightarrow \C$ such that $Xe^{\lambda h} = \nu_X Xe^{\lambda h}.$ From now, it is enough to argue as in the proof of Theorem \ref{teorema parabolico} to deduce the result.
 \end{proof}

Let us deal now with another interesting consequence.  The next result extendes the Montes-Rodríguez, Ponce-Escudero and Shkarin result for non-automorphic parabolic linear fractional maps discussed in Section \ref{seccion introduccion}:

\begin{corollary}\label{corolario parabolico no-automorfismo}
	Let $\Fi : \D\rightarrow \D$ be a linear fractional non-automorphic parabolic map. Then, $C_\Fi \in \EL(H^p)$ does not commute with any non-trivial idempotent for all $1\leq p < \infty.$
\end{corollary}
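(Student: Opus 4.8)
The plan is to deduce Corollary \ref{corolario parabolico no-automorfismo} as a direct instance of Theorem \ref{functional commutant parabolico}, after verifying its hypotheses for a linear fractional non-automorphic parabolic map $\Fi$. First I would recall the standard normal form: such a $\Fi$ has a unique (boundary) fixed point $\tau \in \T$, and conjugating by a disc automorphism sending $\tau$ to $1$ one may assume $\Fi$ corresponds, via the Cayley transform to the right half-plane, to the translation $w \mapsto w + c$ with $\Rpart(c) \geq 0$, $c\neq 0$; non-automorphicity forces $\Rpart(c)>0$, while $\Rpart(c)=0$ would give an automorphism. I would then observe that $\Fi$ embeds in the parabolic semiflow $\Fi_t$ whose Koenigs function $h$ is (a linear rescaling of) the Cayley transform, so that the Koenigs domain $\Omega = h(\D)$ is precisely the right half-plane $\{w : \Rpart(w) > 0\}$ — in particular $\Omega$ is convex, hence $\inte(\overline{\Omega}) = \Omega$ trivially, and $\Omega$ is contained in the half-plane $\{\Rpart(w)>0\}$. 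This is the zero-hyperbolic-step case, consistent with Proposition \ref{proposicion geometria}.

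The remaining hypothesis to check is the Blaschke-type condition $\sum_{n=1}^\infty 1 - |\Fi_n(0)| = \infty$, where $\Fi_n$ is the $n$-th iterate. Here I would compute directly: in the half-plane model, if $0 \in \D$ corresponds to a point $w_0$ with $\Rpart(w_0) > 0$, then $\Fi_n(0)$ corresponds to $w_0 + nc$, so $\Rpart$ of the $n$-th iterate grows linearly, $\Rpart(w_0 + nc) = \Rpart(w_0) + n\Rpart(c)$. Translating back through the Cayley transform $w \mapsto \frac{w-1}{w+1}$ (or its appropriate normalization), a point at real part $R \to \infty$ maps to a point of $\D$ at distance $\asymp 1/R$ from the boundary, i.e. $1 - |\Fi_n(0)| \asymp \frac{1}{n\Rpart(c)} \asymp \frac{1}{n}$. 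Since $\sum 1/n$ diverges, the condition \eqref{blaschke condition} holds. (Alternatively, one may quote \cite[Lemma 2.6]{BMS} or the remark following Lemma \ref{lema dimension kernel}, which already records that for parabolic semiflows of zero hyperbolic step the series diverges, once one knows the non-automorphic parabolic case is of zero hyperbolic step — which follows from $\Omega$ being a half-plane rather than a strip, via Proposition \ref{proposicion geometria} and the strip-characterization of positive hyperbolic step.)

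With all hypotheses of Theorem \ref{functional commutant parabolico} verified, that theorem yields that $C_\Fi \in \EL(H^p)$ has the functional commutant property, and hence by Theorem \ref{main result} (applied to the commutant) there are no non-trivial idempotents in $\conm{C_\Fi}$; equivalently, $C_\Fi$ does not commute with any non-trivial idempotent. I would state this for all $1 \leq p < \infty$, as the underlying theorem covers that range.

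The main obstacle — really the only point requiring care — is the bookkeeping around the linear fractional normal form: one must make sure the particular $\Fi_1$ arising from the chosen semiflow $(\Fi_t)$ is genuinely the given map $\Fi$ (up to the conjugation), that the "non-automorphic" hypothesis is exactly what rules out the strip/automorphism case and forces $\Rpart(c) > 0$, and that the Koenigs domain is computed with the correct orientation so that it is a genuine half-plane (not, say, a strip, which would correspond to the hyperbolic non-automorphic case). None of this is deep, but it is the step where a sloppy identification could silently put one in the wrong case of Proposition \ref{proposicion geometria}.
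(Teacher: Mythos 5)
Your proposal is correct and follows essentially the same route as the paper: reduce to a parabolic normal form (translation $w\mapsto w+c$ with $\Rpart(c)>0$ in the half-plane model), observe that the Koenigs domain is a half-plane so that $\inte(\overline{\Omega})=\Omega$ and $\Omega$ lies in a half-plane, verify $\sumn 1-|\Fi_n(0)|=\infty$ from the linear growth of the orbit (the paper does the same computation in disc coordinates, with $\Fi_n(0)=\frac{na}{2+na}$ and limit comparison with $1/n$), and then apply Theorem \ref{functional commutant parabolico}. The only cosmetic point is that, since the Koenigs function must conjugate $\Fi$ to translation by $1$, it is the Cayley transform divided by $c$, so $\Omega$ is the rotated half-plane $\{w:\Rpart(cw)>0\}$ rather than literally $\{\Rpart(w)>0\}$ — harmless, as the theorem only requires containment in some half-plane, a point you yourself flag.
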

\begin{proof}
	 By conjugation, we may assume that
	
	$$\Fi_a(z) = \frac{(2-a)z+a}{-az+2+a},$$
	
	where $\PR(a)>0$.  This function $\Fi_a$ can be easily shown to be embedded in a parabolic semiflow of the form $(\Fi_{at}(z))_{t\geq 0}$, whose Koenigs function is given by $h(z) = \frac{1}{a} \frac{1+z}{1-z}.$ As a consequence, the Koenigs domain $\Omega$ is a non-horizontal half-plane, so the hypotheses of Theorem \ref{functional commutant parabolico} on $\Omega$ are verified. 
	
	Then, if we show that
	$$\sumn 1-|\Fi_n(0)| = \infty,$$ we can apply Theorem \ref{functional commutant parabolico} and the proof will be done. We have that $\Fi_n(0) = \frac{na}{2+na}$ for every $n\in \N,$ so $|\Fi_n(0)| = \frac{n|a|}{\sqrt{4+4n\PR(a)+n^2|a|^2}}.$ Hence, by considering $1-|\Fi_n(0)|$ and multiplying by the conjugate of the numerator we obtain  $$1-|\Fi_n(0)| = \frac{4+4n\PR(a)}{4+4n\PR(a)+n^2|a|^2+n|a|\sqrt{4+4n\PR(a)+n^2|a|^2}}.$$ At this point, it is enough to apply the limit comparion test with the sequence $1/n$ to deduce that $\sumn 1-|\Fi_n(0)|$ diverges, as we wished. This ends the proof.
\end{proof}

On the other hand, there are some geometric configurations of the Koenigs domain of parabolic semigroups that lead to non-abelian commutants, which prevent $C_\Fi$ to have the functional commutant property or minimal commutant.

\begin{theorem}\label{conmutante abeliano parabolico}
Let $\Fi : \D \rightarrow \D$ be a holomorphic function such that there exists a parabolic semiflow $(\Fi_t)_{t\geq 0}$ with $\Fi = \Fi_1.$ Let $\Omega$ be its Koenigs domain and assume it is contained in a horizontal half-plane of the form $\{w\in\C: \Impart(w)> -\eta\}, \eta >0$. Then, $\conm{C_\Fi}$ is not abelian. In particular, $C_\Fi$ does not have the functional commutant property or minimal commutant.
\end{theorem}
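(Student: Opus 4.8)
The plan is to follow the strategy of Theorem \ref{teorema conmutante no abeliano hiperbolico}, producing two operators inside $\conm{C_\Fi}$ that do not commute with one another: a multiplication operator $M_g$ coming from an eigenfunction of the infinitesimal generator, and a composition operator $C_{\Fi_{1/2}}$ from the ambient semiflow. The only place where the geometric hypothesis enters is the following observation. Writing $h$ for the Koenigs function of $(\Fi_t)_{t\geq 0}$, the assumption $\Omega = h(\D) \subseteq \{w \in \C : \Impart(w) > -\eta\}$ gives, for every $x \geq 0$ and every $z \in \D$,
\[
\bigl| e^{ix h(z)} \bigr| = e^{-x\,\Impart(h(z))} < e^{x\eta},
\]
so that $e^{ixh} \in H^\infty$ for all $x \geq 0$. (In contrast with the hyperbolic case, where $\Omega$ sits inside a horizontal strip and one gets $e^{ixh}\in H^\infty$ for every real $x$, here the half-plane hypothesis only yields the bound for $x \geq 0$; this will suffice.) I then set $g := e^{2\pi i h} \in H^\infty$, so that the multiplication operator $M_g$ is bounded on $H^p$.

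Next I would derive the analogue of \eqref{eigenoperator}. From the Koenigs relation $h(\Fi_t(z)) = h(z) + t$ one gets $g \circ \Fi_t = e^{2\pi i t} g$, and hence, for all $f \in H^p$ and $t \geq 0$,
\[
C_{\Fi_t} M_g f = (g\circ\Fi_t)\,(f\circ\Fi_t) = e^{2\pi i t}\, M_g\, C_{\Fi_t} f, \qquad \text{i.e.}\qquad C_{\Fi_t} M_g = e^{2\pi i t}\, M_g\, C_{\Fi_t}.
\]
Taking $t = 1$ gives $C_\Fi M_g = M_g C_\Fi$, so $M_g \in \conm{C_\Fi}$; and since $(\Fi_t)_{t\geq 0}$ is a semiflow, $C_{\Fi_{1/2}} C_\Fi = C_{\Fi_{3/2}} = C_\Fi C_{\Fi_{1/2}}$, so $C_{\Fi_{1/2}} \in \conm{C_\Fi}$ as well. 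However, taking $t = 1/2$ in the identity above yields $C_{\Fi_{1/2}} M_g = - M_g C_{\Fi_{1/2}}$. Since $g$ is zero-free on $\D$ the operator $M_g$ is injective, and $C_{\Fi_{1/2}}$ is injective (see the comment after \eqref{spectral mapping theorem point spectrum}), so $M_g C_{\Fi_{1/2}} \neq 0$; therefore $C_{\Fi_{1/2}} M_g = - M_g C_{\Fi_{1/2}} \neq M_g C_{\Fi_{1/2}}$. Thus $\conm{C_\Fi}$ contains two non-commuting operators and so is not abelian, and Lemma \ref{lema conmutante abeliano} then shows that $C_\Fi$ has neither the functional commutant property nor minimal commutant.

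Since the computation is essentially a verbatim transcription of the hyperbolic argument, I do not expect a genuine obstacle. The two points deserving attention are that the half-plane hypothesis only provides $e^{ixh}\in H^\infty$ for $x\geq 0$ — which forces the choice of a positive frequency $x_0 = 2\pi$ — and the verification that the exhibited operators really fail to commute, which reduces to the non-vanishing of $M_g C_{\Fi_{1/2}}$ and is settled by injectivity.
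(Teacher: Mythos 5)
Your argument is correct and is essentially the paper's own proof: the paper likewise observes that the half-plane hypothesis gives $e^{ixh}\in H^\infty$ for $x\geq 0$, forms $M_{e^{ix h}}$, reuses the identity \eqref{eigenoperator} from the hyperbolic case with $x_0=2\pi$, and exhibits a $C_{\Fi_{t_0}}$ in the commutant that fails to commute with it, finishing with Lemma \ref{lema conmutante abeliano}. Your only (harmless) deviation is deriving $C_{\Fi_t}e^{2\pi i h}=e^{2\pi i t}e^{2\pi i h}$ directly from the Koenigs relation rather than via the spectral results, and you make explicit the injectivity check that the non-commuting product is nonzero.
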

\begin{proof}
Let $h$ be the Koenigs function associated to $(\Fi_t)_{t\geq 0}.$ Observe that, by the hypothesis on $\Omega,$ the functions $e^{ix h}$ belongs to $H^\infty$ for every $x\geq 0.$   Then, one may consider the multiplication operator $M_{e^{ix h}}.$ Repeating the arguments of the proof of Theorem  \ref{teorema conmutante no abeliano hiperbolico}, we deduce that \eqref{eigenoperator} also holds for this situation, and the same ideas exposed in the aforementioned proof yields that $\conm{C_\Fi}$ is not abelian.
\end{proof}
The previous result can be applied to show, for instance, that if $\Fi$ is a parabolic automorphism of $\D$ then $C_\Fi \in \EL(H^p)$ does not have abelian commutant for any $1\leq p < \infty.$ Since it is an automorphism, it can be embedded in a parabolic group whose Koenigs domain is an upper or a lower half-plane, so the conditions of Theorem \ref{conmutante abeliano parabolico} are satisfied.

\subsection{Extended eigenvalues and strong compactness}\label{subseccion 2}
We finish this section with two results about the extended eigenvalues and the strong compactness of composition operators whose symbol is embedded into a parabolic semiflow.

\medskip

With respect to extended eigenvalues, one might consider different cases regarding the geometry of the point spectrum of the infinitesimal generator. We prefer to present a less general but simpler result, which will totally characterize the extended eigenvalues for some geometrical pictures of the Koenigs domain:

\begin{theorem}\label{extended eigenvalues parabolico}
	Let $\Fi : \D \rightarrow \D$ be a holomorphic function such that there exists a parabolic semiflow $(\Fi_t)_{t\geq 0}$ with $\Fi = \Fi_1.$ Let $\Omega$ be the Koenigs domain and assume it is contained in a horizontal half-plane of the form $\{w\in\C: \Impart(w)> -\eta\}, \eta >0$ and that $\partial \Omega$ is contained in a horizontal strip. Assume further $\inte(\overline{\Omega})=\Omega$ and $\C\setminus\overline{\Omega}$ is connected. Then, $$\textnormal{Ext}(C_\Fi) = \T.$$
\end{theorem}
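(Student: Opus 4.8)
The plan is to pin down $\sigma_p(C_\Fi)$ explicitly, and then to establish the two inclusions $\T\subseteq\textnormal{Ext}(C_\Fi)$ and $\textnormal{Ext}(C_\Fi)\subseteq\T$ by exploiting the eigenfunctions $e^{\lambda h}$ of the infinitesimal generator.

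First I would observe that $(\Fi_t)_{t\geq 0}$ is necessarily of positive hyperbolic step: since $\Omega$ is contained in a horizontal half-plane, Proposition \ref{proposicion geometria}(ii) rules out zero hyperbolic step. Hence the hypotheses of both Theorem \ref{betsakos parabolico}(ii) and Proposition \ref{densidad parabolico}(i) are in force, so $\sigma_p(\Delta\mid_{H^p}) = \{\lambda\in\C : \PR(\lambda)=0,\ \Impart(\lambda)\geq 0\}$ and, by \eqref{spectral mapping theorem point spectrum}, $\sigma_p(C_\Fi\mid_{H^p}) = \{e^{ix} : x\geq 0\} = \T$. Moreover, for every $x\geq 0$ the function $e^{ixh}$ lies in $H^\infty$, because $|e^{ixh(z)}| = e^{-x\Impart(h(z))}\leq e^{x\eta}$ by the hypothesis $\Omega\subseteq\{w : \Impart(w) > -\eta\}$, and since $ix\in\sigma_p(\Delta\mid_{H^p})$ we have $C_{\Fi_t}e^{ixh} = e^{itx}e^{ixh}$ for all $t\geq 0$.

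For the inclusion $\T\subseteq\textnormal{Ext}(C_\Fi)$ I would repeat the computation leading to \eqref{eigenoperator} in the proof of Theorem \ref{teorema conmutante no abeliano hiperbolico}: for each $x\geq 0$ the multiplication operator $M_{e^{ixh}}$ is bounded on $H^p$, and using multiplicativity of $C_\Fi$ together with $C_\Fi e^{ixh} = e^{ix}e^{ixh}$ one gets $C_\Fi M_{e^{ixh}} = e^{ix}M_{e^{ixh}}C_\Fi$. Thus $e^{ix}\in\textnormal{Ext}(C_\Fi)$ for every $x\geq 0$, and as $x$ runs over $[0,\infty)$ this already exhausts $\T$.

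The reverse inclusion is the point where the parabolic setting departs from the hyperbolic one: here $\sigma_p(C_\Fi)=\T$ has empty interior, so $C_\Fi$ does not have rich point spectrum and the argument via \cite{LLPZ} used in Theorem \ref{extended eigenvalues hiperbolico} is unavailable; this is the main obstacle, and I would get around it using density of eigenfunctions instead. Let $\lambda\in\textnormal{Ext}(C_\Fi)$ with associated eigenoperator $A\neq 0$, so $C_\Fi A = \lambda A C_\Fi$; applying this to $e^{ixh}$ gives $C_\Fi(Ae^{ixh}) = \lambda e^{ix}(Ae^{ixh})$, i.e. $Ae^{ixh}\in\ker(C_\Fi - \lambda e^{ix}I)$ for every $x\geq 0$. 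If $|\lambda|\neq 1$, then $\lambda e^{ix}\notin\T = \sigma_p(C_\Fi)$, forcing $Ae^{ixh}=0$ for all $x\geq 0$; but $\Span\{e^{ixh} : x\geq 0\} = \Span\{e^{\lambda h} : \lambda\in\sigma_p(\Delta\mid_{H^p})\}$ is dense in $H^p$ by Proposition \ref{densidad parabolico}(i), so $A=0$, a contradiction. Hence $|\lambda|=1$, giving $\textnormal{Ext}(C_\Fi)\subseteq\T$ and finishing the proof. Everything beyond the recognition that the rich-point-spectrum machinery must be replaced by Proposition \ref{densidad parabolico} is a routine transcription of the earlier computations.
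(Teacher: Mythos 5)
Your proof is correct and follows the same overall route as the paper: the inclusion $\T\subseteq\textnormal{Ext}(C_\Fi)$ is obtained exactly as in the paper, via the identity \eqref{eigenoperator} for the bounded multipliers $M_{e^{ixh}}$, $x\geq 0$, and the reverse inclusion rests on the same two ingredients, namely $\sigma_p(C_\Fi\mid_{H^p})=\T$ (Theorem \ref{betsakos parabolico}(ii) plus \eqref{spectral mapping theorem point spectrum}) and the density of $\Span\{e^{\lambda h}:\lambda\in\sigma_p(\Delta\mid_{H^p})\}$ from Proposition \ref{densidad parabolico}. The only genuine difference is that where the paper closes the argument by invoking \cite[Lemma 2.6]{LLPR3}, you in-line an elementary substitute: applying $C_\Fi A=\lambda AC_\Fi$ to the eigenvectors $e^{ixh}$, noting that $\lambda e^{ix}\notin\sigma_p(C_\Fi)=\T$ when $|\lambda|\neq 1$ (and that $C_\Fi$ is injective, covering $\lambda=0$), and using density to force $A=0$. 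This buys self-containedness and makes transparent why the rich-point-spectrum machinery of the hyperbolic case is unusable here ($\T$ has empty interior); you also make explicit, via Proposition \ref{proposicion geometria}(ii), the positive-hyperbolic-step verification needed to place yourself in case (i) of Proposition \ref{densidad parabolico}, a point the paper leaves implicit. No gaps.
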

\begin{proof}
	As it was shown in the proof of Theorem \ref{conmutante abeliano parabolico}, the identity \eqref{eigenoperator} holds for $C_{\Fi_t}$ ($t\geq 0$) and $x\geq 0.$ Moreover, using the same arguments that in the proof of Theorem \ref{extended eigenvalues hiperbolico}, we deduce that $\T \subseteq \textnormal{Ext}(C_\Fi).$ Finally, by Theorem \ref{betsakos parabolico} and \eqref{spectral mapping theorem point spectrum} we deduce that $\sigma_p(C_\Fi) = \T$, and applying Proposition \ref{densidad parabolico} together with \cite[Lemma 2.6]{LLPR3} it follows that $\textnormal{Ext}(C_\Fi)\subseteq \T$, and the proof is done.
\end{proof}

Finally, we deal with the strong compactness:

\begin{theorem}\label{strong compactness parabolico}
	Let $\Fi : \D \rightarrow\D$ be a holomorphic function such  that there exists a parabolic semiflow $(\Fi_t)_{t\geq 0}$ with $\Fi = \Fi_1.$ Let $\Omega$ be its Koenigs domain. Assume that either 
\begin{enumerate}
	\item [(i)] $(\Fi_t)_{t\geq 0}$ is of positive hyperbolic step, $\inte(\overline{\Omega})=\Omega$ and $\C\setminus\overline{\Omega}$ is connected; or
	\item [(ii)]  $(\Fi_t)_{t\geq 0}$ is of zero hyperbolic step, $\Omega$ is contained in a half-plane and $\inte(\overline{\Omega})= \Omega.$
\end{enumerate}
Then, $C_\Fi$ is strongly compact. Moreover, if $$\sumn 1-|\Fi_n(0)|= \infty,$$ then $\conm{C_\Fi}$ is a strongly compact algebra.
\end{theorem}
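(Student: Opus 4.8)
The statement has two parts, and I would treat them separately.

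For the first assertion — that $C_\Fi$ is strongly compact — I would simply mirror the argument of Theorem \ref{strong compactness hiperbolico}. Under either hypothesis ($i$) or ($ii$), Proposition \ref{densidad parabolico} guarantees that the linear span of the eigenfunctions $\{e^{\lambda h}:\lambda\in\sigma_p(\Delta\mid_{H^p})\}$ is dense in $H^p$, and then \cite[Theorem 2.1]{FL} applies directly to yield the strong compactness of $C_\Fi$. Nothing new is required here; the only difference with the hyperbolic case is that one invokes the density provided by Proposition \ref{densidad parabolico} in place of the one in Proposition \ref{proposicion densidad}.

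The substance lies in the second assertion, under the extra hypothesis $\sumn 1-|\Fi_n(0)|=\infty$. I would first record its structural consequences: by \cite[Lemma 2.6]{BMS} the semiflow is necessarily of zero hyperbolic step, so we are in case ($ii$); by Lemma \ref{lema dimension kernel} every eigenvalue of $C_\Fi$ is simple, and combining this with \eqref{spectral mapping theorem point spectrum} we obtain $\ker(C_\Fi-e^{\lambda}I)=\C\, e^{\lambda h}$ for every $\lambda$ in $G:=\sigma_p(\Delta\mid_{H^p})$, a connected set by Lemma \ref{lema conexion}. Exactly as in the proof of Theorem \ref{functional commutant parabolico}, it then follows that every $X\in\conm{C_\Fi}$ is simultaneously diagonalized by this family: there is a function $\nu_X:G\to\C$ with $Xe^{\lambda h}=\nu_X(\lambda)e^{\lambda h}$, and since $\nu_X(\lambda)\in\sigma_p(X)$ one has $|\nu_X(\lambda)|\le\|X\|$ for all $\lambda\in G$. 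With this in hand I would run a direct sequential-compactness argument. Fix a countable dense subset $D\subset G$; since $\lambda\mapsto e^{\lambda h}$ is continuous from $G$ into $H^p$ with its weak topology (as observed in the proof of Theorem \ref{teorema parabolico}), a Hahn--Banach argument together with Proposition \ref{densidad parabolico} shows that $\Span\{e^{\lambda h}:\lambda\in D\}$ is still dense in $H^p$. Now let $\mathcal{B}\subset\conm{C_\Fi}$ be bounded, say $\|X\|\le M$ on $\mathcal{B}$; since $H^p$ is separable, the closed ball of radius $M$ in $\EL(H^p)$ is metrizable in the strong operator topology, so it suffices to extract from an arbitrary sequence $(X_n)\subset\mathcal{B}$ a strongly convergent subsequence. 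Because $|\nu_{X_n}(\lambda)|\le M$, a diagonal argument over $D$ produces a subsequence $(X_{n_k})$ for which $\nu_{X_{n_k}}(\lambda)$ converges for every $\lambda\in D$; hence $X_{n_k}e^{\lambda h}=\nu_{X_{n_k}}(\lambda)e^{\lambda h}$ converges in norm for each such $\lambda$, so $(X_{n_k})$ converges on $\Span\{e^{\lambda h}:\lambda\in D\}$, and by uniform boundedness together with density it converges in the strong operator topology to some $X\in\EL(H^p)$ with $\|X\|\le M$. Finally $X\in\conm{C_\Fi}$ because the commutant is closed in the strong operator topology, so every bounded sequence in $\conm{C_\Fi}$ has a strongly convergent subsequence with limit in $\conm{C_\Fi}$; this is exactly the assertion that $\conm{C_\Fi}$ is a strongly compact algebra.

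The main obstacle, as I see it, is identifying the two ingredients that make the commutant argument work and matching them with the results already available: the simultaneous diagonalization of $\conm{C_\Fi}$, which is precisely where the Blaschke-type condition $\sumn 1-|\Fi_n(0)|=\infty$ enters (through Lemma \ref{lema dimension kernel} and the simplicity of the eigenvalues), and the reduction of the uncountable eigenvector family to a countable one with still-dense span, for which the weak continuity of $\lambda\mapsto e^{\lambda h}$ is exactly what is needed. Carrying out the diagonal extraction by hand, rather than via weak-operator compactness of the unit ball, has the additional benefit of covering the non-reflexive case $p=1$ with no modification.
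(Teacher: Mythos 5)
Your proposal is correct. For the first assertion it coincides exactly with the paper: density of $\Span\{e^{\lambda h}:\lambda\in\sigma_p(\Delta\mid_{H^p})\}$ from Proposition \ref{densidad parabolico} plus \cite[Theorem 2.1]{FL}. For the second assertion the paper simply invokes the second part of \cite[Theorem 2.1]{FL} (total family of eigenvectors with simple eigenvalues) together with Lemma \ref{lema dimension kernel}, whereas you reprove that black box by hand: simultaneous diagonalization of $\conm{C_\Fi}$ on the eigenfunctions (which is where the Blaschke condition enters, via Lemma \ref{lema dimension kernel} and the simplicity of the eigenvalues, exactly as in Theorem \ref{functional commutant parabolico}), reduction to a countable dense set of parameters using the weak continuity of $\lambda\mapsto e^{\lambda h}$ and a Hahn--Banach argument, a diagonal extraction of the bounded scalar functions $\nu_{X_n}$, a three-epsilon passage from convergence on the dense span to strong operator convergence, and metrizability of bounded sets of $\EL(H^p)$ in the strong operator topology thanks to the separability of $H^p$. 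All of these steps are sound, and the SOT-closedness of the commutant even gives the limit inside $\conm{C_\Fi}$, which is more than relative compactness requires. So the route is essentially the same in substance, the only difference being that you make the cited ingredient explicit and self-contained, which has the genuine merit of exhibiting why the argument works for all $1\leq p<\infty$, including the non-reflexive case $p=1$, without relying on any weak-operator compactness of balls.
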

\begin{proof}
	The strong compactness of $C_\Fi$ follows as a byproduct of \cite[Theorem 2.1]{FL}  and Proposition \ref{densidad parabolico}. Moreover, the second part of  of \cite[Theorem 2.1]{FL} together with Lemma \ref{lema dimension kernel} yield the strong compactness of $\conm{C_\Fi},$ and the proof is done.
\end{proof}
\section{Elliptic semiflows}\label{seccion elipticos}
In this section, we deal with composition operators $C_\Fi$ such that its symbol $\Fi$ can be embedded in an elliptic semiflow. Recall that a holomorphic semiflow $(\Fi_t)_{t\geq 0}$ is elliptic if there exists some $t_0 > 0$ such that $\Fi_{t_0}$ has a fixed point in $\D$. Indeed, one can see (for instance, \cite[Remark 8.3.4]{BCD}) that $\Fi_t$ has a fixed point in $\D$ for every $t\geq 0.$ Therefore, our results will focus on the more general setting of holomorphic self-maps of $\D$ with fixed points in the unit disc, instead of considering just symbols $\Fi$ that may be embedded in an elliptic semigroup.

\medskip

It is not difficult to generate non-trivial idempotents (or orthogonal projections in the case $p=2$) lying in the bicommutant of $C_\Fi$ whenever $\Fi$ has a fixed point in $\D$, which, in particular, implies that $C_\Fi$ does not have the functional commutant (or bicommutant) property. These kinds of results were first stated by Guyker \cite{Guyker} for $p=2.$ We include here a proof of the existence of such idempotents for every $1< p < \infty$, for the sake of completeness of the manuscript.

 To prove such a result, let us recall that, given $f \in H^p$ and $g \in H^q$ where $1<p<\infty$ and $\frac{1}{p}+\frac{1}{q} = 1$, the operator $f\otimes g \in \EL(H^p)$ is a rank-one operator that acts as $$(f\otimes g)h = \pe{g,h}f.$$ It is straightforward to check that the operator $\mathbbm{1}\otimes\mathbbm{1}$ is an idempotent, and in particular an orthogonal projection when $p=2.$

\begin{theorem}\label{teorema eliptico}
	Let $\Fi :\D\rightarrow \D$ be a holomorphic function with a fixed point in $\D$ and consider $C_\Fi \in \EL(H^p)$, $1< p < \infty.$ Then, there exist non-trivial idempotents lying in $\biconm{C_\Fi}.$ As a matter of fact, $C_\Fi$ does not have the functional commutant or bicommutant property.
\end{theorem}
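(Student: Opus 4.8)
The plan is to construct a non-trivial idempotent in $\biconm{C_\Fi}$ explicitly, by exploiting the fixed point of $\Fi$ in $\D$. First I would reduce to the case where the fixed point is the origin: if $\Fi(a)=a$ for some $a\in\D$, then conjugating $C_\Fi$ by the (invertible) composition operator $C_{\psi_a}$ induced by the involutive disc automorphism $\psi_a(z)=\frac{a-z}{1-\bar a z}$ produces an operator $C_{\tilde\Fi}$ with $\tilde\Fi=\psi_a\circ\Fi\circ\psi_a$ fixing $0$. Since the functional commutant/bicommutant property is preserved under similarity (an algebra isomorphism $\conm{C_\Fi}\to\conm{C_{\tilde\Fi}}$, $X\mapsto C_{\psi_a}XC_{\psi_a}^{-1}$, transports an idempotent in one bicommutant to an idempotent in the other, and transports the conjectured continuous-function model as well), it suffices to treat $\Fi(0)=0$.

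So assume $\Fi(0)=0$. The key observation is that the constant function $\mathbbm{1}$ is a common fixed vector: $C_\Fi\mathbbm{1}=\mathbbm{1}$, and more generally $C_A\mathbbm{1}$ is an eigenvector whenever... rather, I claim the rank-one operator $J=\mathbbm{1}\otimes\mathbbm{1}$ (which acts as $Jh=\pe{\mathbbm{1},h}\mathbbm{1}=h(0)\mathbbm{1}$) lies in $\biconm{C_\Fi}$. To see $J\in\conm{C_\Fi}$ is immediate: $(C_\Fi J)h=C_\Fi(h(0)\mathbbm{1})=h(0)\mathbbm{1}$, while $(JC_\Fi)h=(h\circ\Fi)(0)\mathbbm{1}=h(\Fi(0))\mathbbm{1}=h(0)\mathbbm{1}$. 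For $J\in\biconm{C_\Fi}$ I would argue that every $A\in\conm{C_\Fi}$ satisfies $A\mathbbm{1}=c\mathbbm{1}$ for some scalar $c$ and that the functional $h\mapsto h(0)$ is, up to scalar, the only element of $(H^p)^*$ fixed by $C_\Fi^*$ — equivalently, $\mathbbm{1}$ is (up to scalar) the unique eigenvector of $C_\Fi$ with eigenvalue $1$ having a nonzero value at $0$, which forces any $A\in\conm{C_\Fi}$ to preserve $\Span\{\mathbbm{1}\}$ and its annihilator-complement in a way making $AJ=JA$. Concretely: $AC_\Fi=C_\Fi A$ gives $C_\Fi(A\mathbbm{1})=A\mathbbm{1}$, and since the fixed subspace of $C_\Fi$ acting on $H^p$ (with $\Fi(0)=0$, $\Fi$ not an elliptic rotation being the only delicate subcase) is exactly the constants, $A\mathbbm{1}=\lambda(A)\mathbbm{1}$; dually $A^*$ commutes with $C_\Fi^*$, whose fixed functionals are scalar multiples of point-evaluation at $0$, so $A^*(\mathbbm{1})=\mu(A)\mathbbm{1}$ in the $(H^p)^*$ picture; then $AJ h=A(h(0)\mathbbm{1})=h(0)\lambda(A)\mathbbm{1}$ and $JAh=(Ah)(0)\mathbbm{1}=\pe{\mathbbm{1},Ah}\mathbbm{1}=\pe{A^*\mathbbm{1},h}\mathbbm{1}=\mu(A)h(0)\mathbbm{1}$, and pairing against $\mathbbm{1}$ with $A$ in both roles shows $\lambda(A)=\mu(A)$, so $AJ=JA$. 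Hence $J\in\biconm{C_\Fi}$, and $J$ is clearly idempotent, nonzero, and not the identity (it has rank one on an infinite-dimensional space), so by Theorem \ref{main result} the operator $C_\Fi$ cannot have the functional bicommutant property; since $\biconm{C_\Fi}\subseteq\conm{C_\Fi}$ the same idempotent lies in $\conm{C_\Fi}$, ruling out the functional commutant property too.

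The main obstacle I anticipate is the elliptic-rotation subcase: when $\Fi$ (after conjugation) is $\Fi(z)=e^{i\vartheta}z$, the fixed subspace of $C_\Fi$ on $H^p$ can be larger than the constants (it contains all monomials $z^n$ with $e^{in\vartheta}=1$), so the uniqueness argument for $A\mathbbm{1}=\lambda(A)\mathbbm{1}$ needs adjusting; but even there one can instead directly verify that $J=\mathbbm{1}\otimes\mathbbm{1}$ commutes with every $A\in\conm{C_\Fi}$ by using that $C_\Fi^*$ fixes point-evaluation at $0$ regardless (since $\Fi(0)=0$), so the dual statement $A^*\mathbbm{1}\in\Span\{\mathbbm{1}\}$ in $(H^p)^*$ still holds, and combined with $C_\Fi(A\mathbbm{1})=A\mathbbm{1}$ one computes $JA=AJ$ without needing the fixed subspace to be one-dimensional — only the two scalars $\pe{\mathbbm{1},A\mathbbm{1}}$ need to match on both sides, which they do. A secondary technical point is that the excerpt restricts to $1<p<\infty$ precisely so that $(H^p)^*$ has the clean integral-pairing description recalled before Lemma \ref{lema rich point spectrum} and rank-one operators $f\otimes g$ are available; I would simply invoke that duality rather than re-deriving it.
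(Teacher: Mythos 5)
Your strategy is the same as the paper's (reduce to $\Fi(0)=0$ and show $J=\mathbbm{1}\otimes\mathbbm{1}\in\biconm{C_\Fi}$), but your treatment of the rotation subcase has a genuine error. When $\Fi(z)=\omega z$ with $\omega$ a root of unity, it is not merely that the uniqueness argument ``needs adjusting'': the operator $J=\mathbbm{1}\otimes\mathbbm{1}$ simply does \emph{not} belong to $\biconm{C_\Fi}$. Concretely, take $\Fi(z)=-z$ and $A=M_{z^2}\in\conm{C_\Fi}$ (multiplication by $z^2$ commutes with $C_{-z}$): then $AJf=f(0)z^2$ while $JAf=(z^2f)(0)\mathbbm{1}=0$, so $AJ\neq JA$ and $J\notin\biconm{C_\Fi}$. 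Your proposed patch --- that only the scalar $\pe{A\mathbbm{1},\mathbbm{1}}$ needs to match on both sides --- is false: $AJ=JA$ forces $A\mathbbm{1}\in\C\mathbbm{1}$, and this fails precisely because $\ker(C_\Fi-I)$ is larger than the constants when $\omega^N=1$ (it contains $z^N$, and $M_{z^N}\in\conm{C_\Fi}$ sends $\mathbbm{1}$ to $z^N$). The paper disposes of this subcase by a different device: if $\textnormal{Arg}(\omega)$ is a rational multiple of $\pi$, then $C_\Fi$ satisfies $C_\Fi^N=I$, is a scalar operator, and non-trivial idempotents already lie in $\alg(C_\Fi)\subseteq\biconm{C_\Fi}$ (e.g.\ the averaging projections $\frac{1}{N}\sum_{k=0}^{N-1}\omega^{-jk}C_\Fi^k$; the paper cites Smith for this), and it runs the $\mathbbm{1}\otimes\mathbbm{1}$ argument only when $\textnormal{Arg}(\omega)$ is an irrational multiple of $\pi$, where a Taylor-coefficient computation shows that the fixed vectors of $C_\Fi$ and of $C_\Fi^*$ are exactly the constants. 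So your proof as written must be repaired by switching to a different idempotent in the root-of-unity case; the conclusion itself is still true there.

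A secondary, fillable gap: in the non-automorphic case you assert rather than prove the two facts carrying the whole argument, namely that $\ker(C_\Fi-I)$ consists only of constants and that the only functionals fixed by $C_\Fi^*$ are multiples of evaluation at $0$. This is where the paper does its real work: the first follows from $\Fi_n\to 0$ uniformly on compact subsets of $\D$, and the second from a weak-convergence argument ($C_{\Fi_n}g\to g(0)\mathbbm{1}$ weakly, using $\norm{C_{\Fi_n}g}\leq\norm{g}$ because $\Fi_n(0)=0$), which is exactly why the restriction $1<p<\infty$ is harmless. These steps are standard but should not be left as assertions.
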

\begin{proof}
	By conjugation, we may assume without loss of generality that the fixed point of $\Fi$ in $\D$ is $0$. Recall that, if $\Fi$ is an automorphism, then $\Fi(z) = \omega z,$ with $\omega\in \T.$ We may also assume without loss of generality that $\textnormal{Arg}(\omega)$ is not a rational multiple of $\pi$, since otherwise $C_\Fi$ is scalar and has plenty of non-trivial idempotents in its bicommutant \cite[Theorem 1.2]{Smith}.

	Thus, let us show that $\mathbbm{1}\otimes\mathbbm{1}$ lies in the bicommutant of $C_\Fi.$ Let $A\in \conm{C_\Fi}$. We have $$A\mathbbm{1}\otimes\mathbbm{1} = (A\mathbbm{1})\otimes \mathbbm{1} \qquad \mathbbm{1}\otimes\mathbbm{1}A = \mathbbm{1}\otimes(A^*\mathbbm{1}).$$
We are showing that $A\mathbbm{1} = \lambda \mathbbm{1}$  and $A^*\mathbbm{1} = \overline{\lambda}\mathbbm{1}$ for some $\lambda \in \C$, which will finish the proof. Recall that $C_\Fi \mathbbm{1} = \mathbbm{1}$ and $C_\Fi^*\mathbbm{1} = \mathbbm{1},$ and since $A\in \conm{C_\Fi}$ and $A^*\in \conm{C_\Fi^*}$ it follows that $A\mathbbm{1}\in \ker(C_\Fi-I)$ and $A^*\mathbbm{1}\in \ker(C_\Fi^*-I).$ Thus, if we show that the eigenvalue $1$ has multiplicity one for both $C_\Fi$ and $C_\Fi^*$ we will be done. It will follow from the fact that if $A\mathbbm{1} = \lambda \mathbbm{1}$ and $A^*\mathbbm{1} = \mu \mathbbm{1},$ we would have
$$\lambda = \pe{A\mathbbm{1},\mathbbm{1}} = \pe{\mathbbm{1},A^*\mathbbm{1}} = \pe{\mathbbm{1},\mu\mathbbm{1}} = \overline{\mu},$$ so $\mu = \overline{\lambda},$ as we wanted. We distinguish two cases:
	
	\medskip
	
	\noindent \textit{Case 1:} $\Fi$ is an automorphism. Then, there exists $\omega \in \T$ such that $\Fi(z) = \omega z,$ where we may assume $\textnormal{Arg}(\omega)$ is not a rational multiple of $\pi.$ In addition, $C_\Fi^*f = f(\overline{\omega}z).$ If $C_\Fi f = f$ or $C_\Fi^*f = f,$ an easy computation involving Taylor series yields that $f$ is constant, so this case is proved. 
	
	\medskip
	
	\noindent \textit{Case 2:} $\Fi$ is not an automorphism. Let us denote as $\Fi_n = \underbrace{\Fi \circ \ldots \circ \Fi}_{n \text{ times}} $ for every $n\in \N.$ 	Now, by \cite[Proposition 1.8.3]{BCD} it follows that $\Fi_n \rightarrow 0$ in compact subsets of $\D.$ If $C_\Fi f = f,$ then $$f(\Fi_n(z)) = (C_{\Fi_n}f)(z) = (C_\Fi^n f)(z) = f(z)$$ for every $z\in \D$ and $n\in \N.$ As a consequence, $f(z) = f(0)$ for every $z\in \D$, so $f$ is constant, as we wished. Now, let $f\in H^q$ with $1/p+1/q = 1$. Assume $C_\Fi^*f =f,$ which implies $C_{\Fi_n}^*f=f$ for every $n\in \N.$ Then, for every $g\in H^p$ we have
	$$\pe{f,g} = \pe{C_{\Fi_n}^*f,g} = \pe{f, C_{\Fi_n}g}.$$ Now,  $C_{\Fi_n}g$ converges uniformly on compact subsets to $g(0)\mathbbm{1}$ and $\norm{C_{\Fi_n}g}= \norm{C_\Fi^ng} \leq \norm{g}$ since $\Fi_n(0) = 0$. These two facts, along with \cite[Chapter 20, Proposition 3.1.5]{Conway}, imply that $C_{\Fi_n}g$ converges to $g(0)\mathbbm{1}$ in the weak topology, so $\pe{f,C_{\Fi_n}g} \rightarrow \overline{g(0)}f(0),$ which implies that $f$ is constant, and the proof is done.
\end{proof}
At this point, it is worth noticing that in \cite[Theorem 6.1]{LLPR} it was shown that for a certain family of univalent symbols $\Fi$ fixing a point in $\D$, the induced composition operators $C_\Fi \in \EL(H^2)$ have minimal commutants, so in particular they have the double commutant property. Nevertheless, as it is shown in Theorem \ref{teorema eliptico}, these operators have non-trivial idempotents (indeed, non-trivial orthogonal projections) lying in $\biconm{C_\Fi},$ which illustrates that the concepts of minimal commutant or the double commutant property are not sufficient to assure the non-existence of non-trivial idempotents in the commutant or the bicommutant of an operator. This fact supports the necessity of introducing the definition of the functional commutant and the functional bicommutant property in order to assure the non-existence of non-trivial idempotents lying in the commutant or bicommutant of bounded operators.

\section*{Acknowledgments}
The author would like to express his gratitude towards Eva A. Gallardo-Gutiérrez, for her useful inputs that helped to improve the presentation and readability of this paper.

\section*{Conflict of interest statement}
The author states that there is no conflict of interest.

\section*{Data availability}
No data was used for the research described in the article.

\end{document}